\documentclass[letterpaper,11pt]{amsart}


\usepackage{graphicx,color,epsfig}
\usepackage{amsfonts,amsmath,amssymb,amsthm}
\usepackage{hyperref}
\usepackage{comment}

\setlength{\textwidth}{6.5in}
\setlength{\marginparsep}{.5in}
\setlength{\oddsidemargin}{.5in}
\setlength{\oddsidemargin}{0in}
\setlength{\evensidemargin}{0in}
\setlength{\paperheight}{11in}
\setlength{\paperwidth}{8.5in}


\setlength{\topmargin}{0in}
\setlength{\textheight}{8.75in}

\raggedbottom

\newtheorem{thm}{Theorem}[section]
\newtheorem{prop}[thm]{Proposition}
\newtheorem{lem}[thm]{Lemma}
\newtheorem{cor}[thm]{Corollary}
\newtheorem{con}[thm]{Conjecture}

\newtheorem{asm}{Assumption}


\theoremstyle{remark}
\newtheorem{rem}[thm]{Remark}

\theoremstyle{definition}
\newtheorem{defn}{Definition}

\newcommand{\ra}{\rightarrow}

\newcommand{\N}{\mathbb N}     
\newcommand{\R}{\mathbb R}     
\newcommand{\Z}{\mathbb Z}     

\renewcommand{\a}{\alpha}
\renewcommand{\b}{\beta}

\renewcommand{\d}{\delta}

\newcommand{\e}{\varepsilon}

\newcommand{\s}{\sigma}
\renewcommand{\th}{\theta}




\newcommand{\ind}[1]{ \mathbf{1}_{ \{ #1 \} } } 

\newcommand{\be}{\begin{equation}}
\newcommand{\ee}{\end{equation}}



\newcommand{\w}{\omega}              
\renewcommand{\P}{\mathbb{P}}        
\newcommand{\E}{\mathbb{E}}          

\DeclareMathOperator{\supp}{supp}


\begin{document}

\title{Excited random walks with non-nearest neighbor steps}
\author{Burgess Davis}
\address{Burgess Davis\\ Purdue University \\ 150 N University Street \\ West Lafayette, IN 47907 \\ USA}
\email{bdavis@stat.purdue.edu}
\urladdr{http://www.stat.purdue.edu/~bdavis/}

\author{Jonathon Peterson}
\address{Jonathon Peterson \\  Purdue University \\ Department of Mathematics \\ 150 N University Street \\ West Lafayette, IN 47907 \\ USA}
\email{peterson@math.purdue.edu}
\urladdr{http://www.math.purdue.edu/~peterson}
\thanks{J. Peterson was partially supported by NSA grants H98230-13-1-0266 and H98230-15-1-0049.}


\date{\today}

\subjclass[2000]{Primary 60K35; Secondary 60K37, 60J15}

\begin{abstract}
Let $W$ be an integer valued random variable satisfying $E[W] =: \d \geq 0$ and $P(W<0)>0$, and consider a self-interacting random walk that behaves like a simple symmetric random walk with the exception that on the first visit to any integer $x\in \Z$ the size of the next step is an independent random variable with the same distribution as $W$. We show that this self-interacting random walk is recurrent if $\d\leq 1$ and transient if $\d>1$. This is a special case of our main result which concerns the recurrence and transience of excited random walks (or cookie random walks) with non-nearest neighbor jumps. 
\end{abstract}

\maketitle

\section{Introduction and statement of main results}

This paper concerns the study of one-dimensional excited random walks with possibly non-nearest neighbor jumps.  
Excited random walks (also called cookie random walks) are a model for self-interacting random motion where the transition probabilities are a function of the local time of the walk at the present location. 
While self-interacting random walks are typically very difficult to study, much is known about one-dimensional nearest neighbor excited random walks. 
Under mild assumptions there are explicit criteria for recurrence/transience, ballisticity, and a characterization of the limiting distributions of the excited random walks \cite{zMERW,bsCRWspeed,bsRGCRW,kzPNERW,kmLLCRW}. 
In the current paper, we study a model for excited random walks that allows for jumps that can even be unbounded. 
We will prove a simple criterion for recurrence/transience of the excited random walk that generalizes the known results for nearest neighbor excited random walks. 


There has been some limited study of excited random walks with bounded jumps in dimensions $d\geq 2$. 
The model that we discuss below is inspired in large measure by the generalized excited random walk introduced in \cite{mprvGMDERW}. However, the methods developed in \cite{mprvGMDERW} were limited to $d\geq 2$, and the behavior of nearest neighbor excited random walks is very different in dimensions larger than one. 
In particular, multi-dimensional excited random walks with local drifts contained in a fixed half-plane are known to be transient with a non-zero limiting speed and CLT type limiting distributions \cite{brCLTERW,mprvGMDERW}. In contrast, for one-dimensional excited random walks it is known that the walks can be transient with sublinear speed and with non-Gaussian limiting distributions \cite{bsCRWspeed,bsRGCRW,kmLLCRW}. 

\subsection{Excited random walks with non-nearest neighbor jumps}
We will describe the model of excited random walks with non-nearest neighbor jumps using the terminology of \emph{cookie environments}\footnote{The ``cookie'' terminology will be explained in Remark \ref{rem:Mcookie} below} introduced by Zerner in \cite{zMERW}. 
Let $M_1(\Z)$ denote the space of probability distributions on $\Z$. 
A cookie environment is an element 
\[
 \w = \{ \w_{x,j} \}_{x\in \Z, j\geq 1} \in (M_1(\Z))^{\Z \times \N} =: \Omega. 
\]
Given a cookie environment $\w$, we will construct a random walk $\{X_n\}_{n\geq 0}$ as follows. 
For each $x \in \Z$ and $j\geq 1$, $\w_{x,j} = (\w_{x,j}(z))_{z \in \Z}$ will determine the step distribution for the random walk upon the $j$-th visit to the site $x$. 
More specifically, given any $x\in \Z$ and $\w \in \Omega$, the excited random walk in the cookie environment $\w$ started at $x$ is a stochastic process $\{X_n\}_{n\geq 0}$ with distribution $P_\w^x$ given by $P_\w^x(X_0 = x) = 1$ and 
\begin{equation}\label{ERWdef}
 P_\w^x \left( X_{n+1} = X_n+z \, | \, \s(X_k, \, k\leq n) \right) = \w_{X_n,L_n(X_n)}(z), \quad \text{where } L_n(y) = \sum_{k=0}^n \ind{X_k = y}. 
\end{equation}

In general, we will allow the cookie environments $\w$ to be chosen from a measure $\eta$ on the space of cookie environments $\Omega$. 
That is, using the notation $\w_x = \{\w_{x,j}\}_{j \geq 1}$ for the cookie environment at the site $x\in \Z$, we will assume that $\{\w_x\}_{x\in \Z}$ is a stationary and ergodic sequence of $M_1(\Z)^\N$-valued random variables under the measure $\eta$.  
For a fixed cookie environment, the measure $P_\w^x$ is called the \emph{quenched distribution} of the random walk. 
If we then average the quenched distribution over all environments according to the measure $\eta$ we obtain the \emph{averaged distribution} of the random walk $\P_\eta^x(\cdot) = E_\eta[ P_\w^x(\cdot) ]$, where $E_\eta$ denotes expectation with respect to the measure $\eta$ on cookie environments.  
We note that often we will be interested in the excited random walk starting at $X_0 = 0$ in which case we will use the notation $P_\w$ and $\P_\eta$ in place of $P_\w^0$ and $\P_\eta^0$. Expectations with respect to $P_\w^x$ and $\P_\eta^x$ will be denoted by $E_\w^x$ and $\E_\eta^x$, respectively, where again the superscript $x$ will be omitted when $x=0$. 

For one-dimensional nearest neighbor excited random walks, much of the qualitative behavior of the random walk is determined by a single parameter: the expected total drift of the cookies environment at a site. 
That is, if $d(\mu) = \sum_z z \mu(z)$ denotes the mean of the distribution $\mu \in M_1(\Z)$ (assuming the mean is finite) then the expected total drift of the cookie environment at a site is
\begin{equation}\label{ddef}
 \d = \d(\eta) := E_\eta\left[ \sum_{j\geq 1} d(\w_{0,j}) \right] = E_\eta\left[ \sum_{j\geq 1} \sum_{z \in \Z} z \, \w_{0,j}(z) \right].
\end{equation}
Naturally, for the model as currently stated there is no guarantee that the expectation on the right side of \eqref{ddef} exists. Note that in the case of nearest neighbor excited random walks the formula for $\d$ simplifies to $\d = E_\eta[ \sum_{j\geq 1} (2\w_{0,j}(1) - 1) ] $, and thus the parameter $\d$ is well defined if the measure $\eta$ is restricted to cookie environments with one of the following properties. 
\begin{itemize}
 \item $\w_{x,j}(1) \geq 1/2$ 
for all $x\in \Z$, $j\geq 1$.
 \item There exists an $M<\infty$ such that $\w_{x,j}(1) = 1/2$ 
for all $x\in \Z$, $j>M$. 
\end{itemize}
These properties are commonly referred to as the case of \emph{non-negative cookies} and \emph{finitely many cookies per site}, respectively. 
In either of these cases, it is known that the parameter $\d$ determines the recurrence/transience of the excited random walk \cite{zMERW,kzPNERW}. That is, the excited random walk is recurrent if $\d \in [-1,1]$, transient to the right if $\d>1$ and transient to the left if $\d < -1$. 
Moreover, in the case of finitely many cookies per site, the parameter $\d$ gives a criterion for when the limiting speed of the excited random walk is non-zero \cite{bsCRWspeed,kzPNERW} and also determines the limiting distribution for the excited random walk \cite{bsRGCRW,kmLLCRW,kzPNERW,dkSLRERW}. 

To prepare for our main results, we first need to provide some assumptions on the measure $\eta$ on cookie environments which will ensure the parameter $\d$ in \eqref{ddef} is well defined. 
To this end, let $\mathcal{M}_0 = \{ \mu \in M_1(\Z): \, d(\mu) = 0 \}$ be the subset of probability measures on $\Z$ with zero mean, and let $\mathcal{M}_+ = \{ \mu \in M_1(\Z): \, d(\mu) \in [0,\infty) \}$
be the subset of probability measures on $\Z$ with non-negative finite mean. 
We will assume that our cookie environments $\w$ belong to a fixed subset $\Omega_{M,\mu}^+ \subset \Omega$ defined by 
\[
 \Omega_{M,\mu}^+ = \left( \mathcal{M}_+^M \times \{\mu \}^\N \right)^\Z, \qquad M \geq 1, \, \mu \in \mathcal{M}_0. 
\]
That is, $\w \in \Omega_{M,\mu}^+$ if and only if $d(\w_{x,j}) \geq 0$ for all $x\in\Z$, $j\leq M$ and $\w_{x,j} = \mu$ for all $x\in \Z$ and $j>M$. 
\begin{rem}\label{rem:Mcookie}
The terminology of ``cookie environments'' is helpful in describing the walk in the following way. For any $\w \in \Omega_{M,\mu}^+$ one imagines a stack of $M$ cookies at each site $x$, where the $j$-th cookie in the stack corresponds to the distribution $\w_{x,j} \in M_1(\Z)$. When the random walker reaches a site $x$ for the $j$-th time with $j\leq M$, the walker consumes the $j$-th cookie in the stack which ``excites'' the walker by inducing a jump probability of $\w_{x,j}$ with non-negative drift for the next step. If the walker ever visits a site $x$ more than $M$ times then on every visit after the $M$-th visit there are no cookies remaining at that site to excite the walker and so the next step is chosen according to the fixed distribution $\mu$ with zero mean. 
%
\end{rem}


Throughout the paper we will make the following assumption for the distribution $\eta$ on cookie environments. 

\begin{asm}\label{asmMmu}
 The measure $\eta$ on cookie environments is such that $\{\w_x\}_{x\in\Z}$ is stationary and ergodic and $\eta( \w \in \Omega_{M,\mu}^+) = 1$ for some fixed $M\geq 1$ and $\mu \in \mathcal{M}_0 \backslash \{\d_0\}$. 
\end{asm}

\begin{asm}\label{asmmu}
 The distribution $\mu$ in Assumption \ref{asmMmu} is such that 
\begin{itemize}
 \item $\mu([-B,B]) = 1$ for some $B<\infty$, and
 \item the span of $\supp \mu$ is $1$. That is, the random walk with step distribution $\mu$ is aperiodic. 
\end{itemize}
\end{asm}

\begin{asm}\label{asmic}
 With $M\geq 1$ as in Assumption \ref{asmMmu}, the distribution $\eta$ is such that 
\begin{equation}\label{cookiefm}
 E_\eta\left[ \sum_{j=1}^M \sum_{z \in \Z} |z| \, \w_{0,j}(z) \right] < \infty.
\end{equation}
\end{asm}

\begin{rem}
 Assumptions \ref{asmMmu}--\ref{asmic} are rather mild. The main restriction we are making is that $\eta$ is concentrated on $\Omega_{M,\mu}^+$; that is, $M$ cookies per site and all cookies induce a non-negative drift. 
The requirements that $\mu$ have finite support and the expectation in \eqref{cookiefm} is finite are technical conditions needed for our proofs, though it is not clear if a different proof could remove these restrictions. 
The other conditions in Assumptions \ref{asmMmu}--\ref{asmic} are used to avoid obvious degeneracies. For instance without the aperiodicity condition on $\mu$ in Assumption \ref{asmmu} our characterization of recurrence/transience by $\d$ in Theorem \ref{thm:rt} below is no longer true. Indeed, one can trivially modify a cookie environment satisfying the assumptions in this paper in such a way that the random walk is concentrated on $2\Z$ without changing the recurrence/transience of the walk but so that the value of $\d$ is doubled. 
\end{rem}

\begin{rem}
 Note that \eqref{cookiefm} in Assumption \ref{asmic} implies that the parameter $\d$ as defined in \eqref{ddef} is finite. However, the converse is not true. 
For instance, suppose that $M=1$ and that the distribution $\eta$ on cookie environments is such that the jump distribution of the walk on the first visit to $x\in \Z$ is given by 
\[
 \w_{x,1}(z)
= \begin{cases}
   1/2 & \text{if } |z| = K_x \\
   0 & \text{otherwise},
  \end{cases}
\]
where $\{K_x\}_{x\in\Z}$ is an i.i.d.\ sequence of non-negative integer valued random variables. Then, clearly $d(\w_{x,1}) = 0$ almost surely, but the expectation in \eqref{cookiefm} is equal to $E_\eta[K_0]$ which may be infinite. 
\end{rem}

Let $\mathcal{R}$, $\mathcal{T}_+$ and $\mathcal{T}_-$ be the events defined by 
\[
 \mathcal{R} = \left\{ \sum_{n\geq 0} \ind{X_n = x} = \infty, \, \forall x \in \Z \right\} 
\quad \text{and}\quad
 \mathcal{T}_\pm = \{ \lim_{n\ra\infty} X_n = \pm \infty \}. 
\]
That is, $\mathcal{R}$ is the event that the random walk is recurrent and $\mathcal{T}_+$ (or $\mathcal{T}_-$) is the event that the random walk is transient to the right (or left). 
Since we have assumed that all cookies induce a non-negative drift, it is natural to expect that the walk is either recurrent or transient to the right. Indeed, 
our first main result shows that this is the case and also gives a sufficient condition for the walk to be recurrent. 
\begin{thm}\label{thm:ergrt}
 Let Assumptions \ref{asmMmu}--\ref{asmic} hold. Then, 
\begin{enumerate}
 \item $\P_\eta( \mathcal{R} \cup \mathcal{T}_+ ) = 1$. \label{K01law}
 \item Moreover, if $\d<1$ then $\P_\eta( \mathcal{R} ) = 1$. 
\end{enumerate}
\end{thm}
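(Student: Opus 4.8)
The plan is to establish a trichotomy for the trajectory --- the walk either is recurrent in the sense of $\mathcal{R}$, or tends to $+\infty$, or tends to $-\infty$ --- and then to eliminate transience to the left using the non-negativity of the cookie drifts (which gives the first assertion) and transience to the right using the hypothesis $\d<1$ (which gives the second). For the trichotomy I would first show that $\P_\eta$-a.s.\ both $\liminf_n X_n$ and $\limsup_n X_n$ lie in $\{-\infty,+\infty\}$. Indeed, if $\liminf_n X_n=L$ for some finite $L$, then $L$ is visited infinitely often, and since $\w\in\Omega_{M,\mu}^+$ every step out of $L$ after its $M$-th visit has law $\mu$; as $\mu\in\mathcal{M}_0\setminus\{\d_0\}$, $\supp\mu$ contains some $z_-<0$, so by the conditional Borel--Cantelli lemma the walk takes a step of size $z_-$ out of $L$ infinitely often, forcing $X_n\le L+z_-<L$ infinitely often, a contradiction; the symmetric argument (with a positive element of $\supp\mu$) handles $\limsup$. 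Hence on $(\mathcal{T}_+\cup\mathcal{T}_-)^c$ we have $\limsup_n X_n=+\infty$ and $\liminf_n X_n=-\infty$.

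Next I would show that on $(\mathcal{T}_+\cup\mathcal{T}_-)^c$ every site is visited infinitely often, so that $(\mathcal{T}_+\cup\mathcal{T}_-)^c\subseteq\mathcal{R}$. Let $A$ be the (random) set of sites visited infinitely often. If $y\in A$, then after the $M$-th visit to $y$ each sojourn at $y$ produces an independent $\mu$-step, so by conditional Borel--Cantelli every site $y+z$ with $z\in\supp\mu$ also lies in $A$; thus $A$ is closed under translation by $\supp\mu$, and since $\supp\mu$ contains elements of both signs and has span $1$ (Assumption~\ref{asmmu}), any nonempty $A$ with this property is cofinite, hence equal to $\Z$. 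It remains to check that $A\neq\emptyset$ on $(\mathcal{T}_+\cup\mathcal{T}_-)^c$: using the previous step one sees that the walk returns to a fixed bounded window near $0$ at arbitrarily large times, and the only scenario that must be ruled out is that all such returns are effected by cookie jumps originating from infinitely many \emph{distinct} sites --- which is where the bounded support of $\mu$ (so that $\mu$-steps crossing a given level start from a bounded window) and the integrability \eqref{cookiefm} of the cookie displacements are used. This gives $\P_\eta(\mathcal{R}\cup\mathcal{T}_+\cup\mathcal{T}_-)=1$.

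To finish the first assertion I must show $\P_\eta(\mathcal{T}_-)=0$. This should follow from the fact that, since every cookie has non-negative drift, $X_n$ is a submartingale under every quenched law $P_\w$ (its conditional increment equals $d(\w_{X_n,L_n(X_n)})\ge0$) and hence under $\P_\eta$: a non-negative drift cannot push the walk to $-\infty$. Making this rigorous requires some care, since when the walk first reaches level $-k$ at time $\sigma_{-k}:=\inf\{n:X_n\le-k\}$ the overshoot $-k-X_{\sigma_{-k}}$ need not be integrable; I would handle this by optional stopping on the event that $\sup_n X_n$ is bounded together with a summability estimate for the cookie displacements coming from \eqref{cookiefm}. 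For the second assertion, by the dichotomy above and $\P_\eta(\mathcal{T}_-)=0$ it suffices to show $\P_\eta(\mathcal{T}_+)=0$ when $\d<1$. Here I would encode the trajectory through the counts $V_k$ of downcrossings of each level $k$; conditioning on the walk reaching a high level and reading these counts from that level downward produces, in the limit, a branching-type process with migration, in which each downcrossing of a level spawns (through the excursion it initiates below that level) a random number of downcrossings of the next level down, while the cookie stack near that level contributes an additive migration term whose mean is governed by $\d$. The walk is transient to the right exactly when this process fails to tend to $+\infty$, and the estimate to prove is that $\d<1$ forces it to tend to $+\infty$ for $\eta$-a.e.\ environment; combined with $\P_\eta(\mathcal{T}_-)=0$ this yields $\P_\eta(\mathcal{R})=1$.

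The main obstacle is the branching description of $\{V_k\}$ when the steps are not nearest-neighbor: a single jump can leap over many levels, so a downcrossing of level $k$ carries an overshoot below $k$ that affects the subsequent excursion, and $\{V_k\}$ is therefore not a scalar branching process with migration but at best a multitype one in which the type records this overshoot. Showing that the offspring and migration means are finite and that they combine into exactly the criterion $\d<1$ is the technical heart, and it is precisely here that Assumptions~\ref{asmmu} and~\ref{asmic} (bounded support of $\mu$, integrability of the cookie displacements) are needed. A secondary difficulty, also due to unbounded steps, is the verification that $A\neq\emptyset$ in the second step, where one must exclude the walk perpetually skipping over a fixed region by means of long cookie jumps.
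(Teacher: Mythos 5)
Your decomposition for part (i) --- first pin $\liminf$ and $\limsup$ to $\{\pm\infty\}$ via aperiodicity and irreducibility of $\mu$, then upgrade oscillation to recurrence by ruling out the walk's perpetually ``leaping over'' a fixed window, then kill $\mathcal{T}_-$ by a submartingale argument --- is structurally the same as the paper's. The step you call ``$A\neq\emptyset$'' is exactly the paper's Lemma~\ref{infsupR}, and you do identify the right mechanism: since $\mu$ has support in $[-B,B]$, any $\mu$-step crossing from $[B,\infty)$ into $(-\infty,0)$ is impossible, so such crossings can only be cookie steps, and by Assumption~\ref{asmic} the expected number of cookie steps from $[B,\infty)$ landing below $0$ is $\sum_{j\le M}\E_\eta[(W_{0,j}+B)_-]<\infty$. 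Hence if the walk oscillates it must visit $[0,B)$ infinitely often, and $\mu$-irreducibility propagates this to every integer. Your sketch stops one step short of this computation, but the idea is there. For $\P_\eta(\mathcal{T}_-)=0$ you should be aware that the purely quenched submartingale argument genuinely fails --- Remark~\ref{rem:lefttransient} gives an explicit $\w\in\Omega_{M,\mu}^+$ (zero drift everywhere) with $P_\w(X_n=-n\ \forall n)>0$ --- so an ergodic-averaging step (Birkhoff) is indispensable to control the overshoot term $\E[(X_{\s_\cdot}+\cdot)_-]$ uniformly over shifted cookie environments; the paper's Proposition~\ref{limsupprop} does precisely this via a recursive sequence of stopping times $s_k$ with geometric decay of $P_\w(s_k<\tau_m)$.

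Where your proposal genuinely diverges, and where the gap is, is part (ii). The paper does \emph{not} use a branching/downcrossings process at all: it runs the \emph{same} recursive stopping-time scheme as in part (i), but with asymmetric nesting ratio $a>\d/(1-\d)$, together with the basic martingale bound \eqref{EXED} $E_\w^y[X_{\tau_z\wedge\s_x}]\le y+\sum_{k=x+1}^{z-1}\bar d(\w_k)$ and Birkhoff to replace the Ces\`aro average of $\bar d(\w_i)$ by $\d$. This yields $\lim_x\sup_{\w'\le\w}P_{\w'}^x(\tau_{(a+1)x+am}<\s_{-m})\le\frac{1}{1+a}+\d<1$ and hence $P_\w(\s_{-m}=\infty)=0$ for a.e.\ $\w$, which combined with part (i) gives recurrence. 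Your proposed branching-process-with-migration analysis of the downcrossing counts $V_k$ is an entirely different (and considerably heavier) route: you yourself flag that with non-nearest-neighbor jumps the overshoot forces a multitype or measure-valued branching structure whose offspring and migration laws are not pinned down, and reducing its survival/extinction dichotomy to the single scalar criterion $\d<1$ is a substantial open problem rather than a calculation. In the nearest-neighbor case the branching description is the standard tool for the sharper results (speed, limit laws), but for the mere recurrence/transience criterion the martingale/optional-stopping argument is both simpler and, crucially, actually closes; your branching sketch does not, and you should replace that half of the proof by the direct stopping-time estimate.
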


Theorem \ref{thm:ergrt} gives a sufficient condition for recurrence in terms of the parameter $\d$. 
To improve this to a complete characterization of recurrence/transience in terms of the value of $\d$ we will need a few more assumptions on the distribution $\eta$ on cookie environments. 

\begin{asm}\label{asmiid}
 The sequence $\{\w_x\}_{x \in \Z}$ is i.i.d.\ under the measure $\eta$. 
\end{asm}


\begin{asm}\label{asmell}
$\eta( \w_{0,1}([1,\infty)) > 0 ) = 1$ 
and $E_\eta\left[ \prod_{j=1}^M \sum_{z\leq 0} \w_{0,j}(z) \right] > 0$.
\end{asm}

\begin{rem}
 If Assumption \ref{asmiid} is satisfied we will say that the cookie environment is \emph{spatially i.i.d.}
Assumption \ref{asmell} is a weak ellipticity assumption with the following probabilistic interpretation: 
the random walk always has a positive probability of jumping to the right on the first visit to a site and 
there is a positive probability 
that all $M$ of the cookies at a given site allow for jumps that are non-positive. 
Note that since $\mu$ is a non-degenerate distribution with zero mean, it follows that $\mu([-B,-1])>0$ so that whenever there are no cookies remaining at a site the walk has a positive probability of stepping to the left. 
\end{rem}

Our other main result extends the characterization for recurrence/transience of nearest neighbor excited random walks to non-nearest neighbor excited random walks satisfying the above assumptions. 
\begin{thm}\label{thm:rt}
 Let Assumptions \ref{asmMmu}--\ref{asmell} hold. Then, 
\begin{enumerate}
 \item $\P_\eta(\mathcal{R}) = 1 \iff \d \leq 1$. 
 \item $\P_\eta(\mathcal{T}_+) = 1 \iff \d > 1$. 
\end{enumerate}
\end{thm}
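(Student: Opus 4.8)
Theorem \ref{thm:ergrt} already gives $\P_\eta(\mathcal{R}\cup\mathcal{T}_+)=1$ and the implication $\d<1\Rightarrow\P_\eta(\mathcal{R})=1$, so it remains to prove recurrence when $\d=1$, transience when $\d>1$, and the zero--one laws needed to improve positive-probability statements to almost-sure ones. The plan is to funnel all of this into the survival-versus-extinction behavior of a branching process with migration read off from a Ray--Knight (occupation-time) decomposition of the walk.

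\emph{Step 1: reduce to a one-sided passage problem.} Since all cookies have non-negative drift, Theorem \ref{thm:ergrt} rules out $\mathcal{T}_-$, so up to null sets $\mathcal{R}=\{\liminf_n X_n=-\infty\}$ and $\mathcal{T}_+=\{\liminf_n X_n=+\infty\}$. Using the bounded range and aperiodicity of $\mu$ from Assumption \ref{asmmu} (so that whenever the walk comes close to a site it a.s.\ later hits the site exactly) together with the ellipticity in Assumption \ref{asmell}, one shows that the walk is recurrent if and only if $\P_\eta(\sigma<\infty)=1$, where $\sigma:=\inf\{n\ge0:X_n\le-1\}$ — the point being that at each new leftward record the environment further to the left is still fresh, so this passage question self-reproduces. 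One also proves a Zerner-type zero--one law, using the i.i.d.\ environment of Assumption \ref{asmiid}, giving $\P_\eta(\mathcal{R})\in\{0,1\}$.

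\emph{Step 2: the occupation-time process.} Run the walk until time $\sigma$ and, for $k\ge0$, let $Z_k$ be the number of jumps that go from $\{X\ge k\}$ into $\{X<k\}$ before time $\sigma$; so $Z_0=1$ and the walk is recurrent precisely when $(Z_k)_{k\ge0}$ is eventually $0$. For a driftless nearest-neighbor walk $(Z_k)$ is exactly the critical Galton--Watson process of classical Ray--Knight theory; the zero-mean bounded-range steps of $\mu$ perturb it by a harmless bounded-range bookkeeping; and the at most $M$ cookies consumed near each site add a migration combining their total expected drift $\d$ with the structural $-1$ inherent in the Ray--Knight recursion, so that the resulting branching process with migration has mean migration $\d-1$. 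The required input from the theory of such processes (cf.\ \cite{kzPNERW}) is the dichotomy: mean migration $\le0$ forces almost-sure extinction — covering $\d\le1$, in particular the critical case $\d=1$ — whereas mean migration $>0$ permits survival with positive probability, so that $\P_\eta(\sigma<\infty)<1$ and, by Step 1 together with Theorem \ref{thm:ergrt}, $\P_\eta(\mathcal{T}_+)=1$ when $\d>1$. The bounded support of $\mu$ and the integrability hypothesis \eqref{cookiefm} are used throughout to keep the offspring and migration laws integrable and to make the comparison of the true occupation-time chain with an honest branching process with migration quantitative.

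\emph{Main obstacles.} Two places carry the weight. The first is building the occupation-time recursion for genuinely non-nearest-neighbor steps: a single jump can cross many levels at once, and a cookie jump can be arbitrarily long, so the clean ``each crossing has i.i.d.\ offspring'' picture must be replaced by a decomposition that nevertheless retains sufficient independence between successive levels — this is exactly where Assumptions \ref{asmiid}--\ref{asmell} and the moment bound \eqref{cookiefm} do real work. The second, and more delicate, is the critical case $\d=1$: the associated branching process with migration is then critical with mean-zero migration, and proving its almost-sure extinction — equivalently $\P_\eta(\sigma<\infty)=1$, hence recurrence — needs a genuinely sharp argument rather than a soft comparison or monotonicity.
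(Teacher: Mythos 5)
Your proposal pursues a genuinely different route from the paper: you propose to extend the Ray--Knight / branching-process-with-migration machinery of the nearest-neighbor literature (Basdevant--Singh, Kosygina--Zerner \cite{kzPNERW}), whereas the paper instead follows Zerner's ``cookie environment process'' approach from \cite{zERWZdstrip}. Concretely, the paper constructs a weak-Feller Markov chain $\zeta_n = (\theta^n\bar\w(\tau_n), X_{\tau_n}-n)$ on the (non-compact) space of cookie environments, proves tightness of its Ces\`aro averages (Lemma \ref{lem:tightness}) to extract a stationary measure $\pi$, shows via a sub-martingale/optional-stopping argument that $\E_\pi[D_\infty^0]\le 1$ always (Lemma \ref{EpiDinf}) with equality iff transient (Lemma \ref{EpiDinf1}), and then reads off the dichotomy $\d\le 1$ vs.\ $\d>1$ from $\E_\pi[D_\infty^0]$, using the 0--1 law and Assumption \ref{asmell} to make the boundary case strict. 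No occupation-time process appears anywhere; this is what lets the paper sidestep the difficulties you identify.

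That said, your write-up has two genuine gaps that keep it from being a proof rather than a research plan. First, the branching structure of the downcrossing process $(Z_k)$ is \emph{not} established for non-nearest-neighbor jumps, and this is not a harmless bookkeeping issue: with jumps of unbounded range a single step can cross arbitrarily many levels at once, so the successive $Z_k$'s are neither conditionally independent given $Z_{k-1}$ nor do they have an offspring-plus-migration form with any obvious law. You acknowledge this (``the clean i.i.d.\ offspring picture must be replaced by a decomposition that nevertheless retains sufficient independence''), but you do not say what that decomposition is, and the statement that ``the mean migration is $\d-1$'' is asserted, not derived, for the non-nearest-neighbor case. Without this, the appeal to the extinction/survival dichotomy for branching processes with migration (whose known statements are calibrated to the genuinely Markovian, nearest-neighbor-derived chains in \cite{kzPNERW}) has nothing to bite on. Second, the 0--1 law $\P_\eta(\mathcal{R})\in\{0,1\}$ is invoked in one sentence, yet it is one of the harder parts of the paper precisely because Assumption \ref{asmell} is a very weak ellipticity condition; the proof in Section \ref{sec:01} requires a careful path-surgery argument (Lemma \ref{lem:Kint}, Lemma \ref{lem:LL}) that is not a ``Zerner-type'' argument in any routine sense. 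Your outline also needs this 0--1 law in two places (to define the recurrence/transience dichotomy, and to upgrade $\P_\eta(\sigma<\infty)<1$ to transience), so it cannot be treated as a footnote. The critical case $\d=1$ you correctly flag as delicate, but you propose no mechanism to handle it; by contrast the paper resolves it cleanly by showing transience forces the strict inequality $\E_\pi[D_\infty^0]<\E_\pi[\bar d(\w_0)]=\d$ via Assumption \ref{asmell}.
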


\begin{rem}
 A characterization of recurrence/transience of nearest neighbor excited random walks was proved by Zerner in \cite{zMERW}
 under conditions corresponding only to Assumptions \ref{asmMmu}--\ref{asmic}. 
We note that in the present paper Assumptions \ref{asmiid} and \ref{asmell} are only needed in the proof of Theorem \ref{thm:rt} to prove the 0-1 law for recurrence/transience in Section \ref{sec:01}. 
For nearest neighbor one-dimensional excited random walks this 0-1 law is known to hold for ergodic cookie environments \cite{ABOerg01}, and thus we conjecture that Theorem \ref{thm:rt} can be extended to ergodic cookie environments (possibly under some additional ellipticity assumptions). 
\end{rem}

As noted above, for nearest neighbor excited random walks many other aspects of the random walk are characterized by the parameter $\d$. We conjecture that (under appropriate assumptions) these results can be extended to excited random walks with non-nearest neighbor jumps. In particular, we mention the following conjecture. 
\begin{con}\label{con:LLN}
 Under Assumptions \ref{asmMmu}--\ref{asmell}, there exists a deterministic constant $v_0 \in [0,\infty)$ such that 
\[
 \lim_{n\ra\infty} \frac{X_n}{n} = v_0, \qquad \P_\eta\text{-a.s.}
\]
Moreover, $v_0 > 0$ if and only if $\d>2$. 
\end{con}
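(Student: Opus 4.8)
The plan is to establish the existence of a deterministic speed $v_0$ through a regeneration (renewal) decomposition of the trajectory in the transient regime, and then to locate the threshold $\d=2$ for $v_0>0$ by coupling the left-crossing counts of the walk with a branching process with migration, in the spirit of the nearest-neighbor cookie-walk analyses of Basdevant--Singh and Kosygina--Zerner \cite{bsCRWspeed,kzPNERW}. First assume $\d>1$, so that by Theorem~\ref{thm:rt} the walk is $\P_\eta$-a.s.\ transient to the right. Since steps drawn from $\mu$ have absolute value at most $B$ (Assumption~\ref{asmmu}), every downcrossing of a level lands within a window of width $B$, and it is natural to call a time $n$ a \emph{regeneration time} if $X_m<X_n$ for all $m<n$ and $X_m\geq X_n$ for all $m>n$. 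At such a time every site of $[X_n,\infty)$ still carries its full stack of $M$ cookies and the future trajectory never leaves $[X_n,\infty)$, so the post-$n$ walk is, conditionally, an independent fresh copy of the model. Combining transience with the ellipticity in Assumption~\ref{asmell} --- a positive chance of jumping right on the first visit to a site, and, since the non-degenerate mean-zero $\mu$ satisfies $\mu([-B,-1])>0$, a positive chance of moving left once a site is exhausted --- the usual iteration argument produces infinitely many regeneration times $\tau_1<\tau_2<\cdots$ a.s., and Assumption~\ref{asmiid} makes $\{(X_{\tau_{k+1}}-X_{\tau_k},\,\tau_{k+1}-\tau_k)\}_{k\geq1}$ i.i.d., with the $k=0$ block independent but of a different law. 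A first-moment estimate, which needs some care because rightward cookie jumps may be unbounded, then shows $\E_\eta[X_{\tau_2}-X_{\tau_1}]<\infty$ using Assumption~\ref{asmic}.

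With this structure the renewal theorem gives, on $\mathcal{T}_+$,
\[
 \lim_{n\ra\infty}\frac{X_n}{n}=\frac{\E_\eta[X_{\tau_2}-X_{\tau_1}]}{\E_\eta[\tau_2-\tau_1]}=:v_0\in[0,\infty),
\]
with the convention $v_0=0$ when $\E_\eta[\tau_2-\tau_1]=\infty$; in particular the limit exists, is deterministic, and is positive if and only if $\E_\eta[\tau_2-\tau_1]<\infty$. When instead $\d\leq1$ the walk is recurrent (Theorem~\ref{thm:rt}), and one gets $X_n/n\to0$ by a separate, more elementary argument --- for instance by sandwiching $X_n$, via a monotonicity coupling as available for one-dimensional excited random walks, between a mean-zero bounded-jump random walk (delete every cookie) and a transient excited random walk of total drift $\d'\in(1,2)$, the latter having zero speed by the threshold analysis below. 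Hence $X_n/n\to v_0$ holds in all cases and it remains to show $v_0>0\iff\d>2$.

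For the threshold, assume $\d>1$ and encode the walk by its left-crossing counts: for $x<n$ let $U_n^x$ be the number of crossings (recorded over the width-$B$ boundary layer of level $x$) from the right of level $x$ to its left before the walk first reaches level $n$. Read in the direction of decreasing $x$, the family $\{U_n^x\}$ forms a branching process with migration whose asymptotic offspring mechanism is determined by $\mu$ and in which the $M$ cookies per site enter only as a perturbation affecting small population sizes; the first time to reach level $n$, and hence the gap between two consecutive regeneration times, equals $\sum_x U_n^x$ up to lower-order terms. Adapting the diffusion approximation of such processes from the nearest-neighbor cookie-walk literature \cite{bsCRWspeed,kzPNERW}, the cookies perturb only an initial ``excited'' phase, after which the process is asymptotically critical and its total-progeny distribution has a power tail of index $\d/2$; consequently $\E_\eta[\tau_2-\tau_1]<\infty$ if and only if $\d>2$. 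Together with the renewal formula above this yields $v_0>0\iff\d>2$ when $\d>1$ and $v_0=0$ for $\d\in(1,2]$, completing the proof.

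The main obstacle is this last step. For non-nearest-neighbor jumps the clean Galton--Watson-type encoding of the nearest-neighbor walk must be replaced by a branching process with migration whose type space reflects the width-$B$ boundary layer, and one must extract the \emph{precise} tail index $\d/2$ --- not merely a qualitative dichotomy --- for an arbitrary finitely supported mean-zero $\mu$; controlling the interplay between the non-Markovian initial ``excited'' phase and the subsequent critical regime is the delicate part. Secondary difficulties are proving $\E_\eta[X_{\tau_2}-X_{\tau_1}]<\infty$ under only a first-moment hypothesis on the cookies, showing that regeneration times exist despite possibly unbounded rightward cookie jumps, and making the stochastic-monotonicity comparison used above rigorous in the present generality.
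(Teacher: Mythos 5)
This statement is Conjecture~\ref{con:LLN}: the paper does not prove it, and explicitly says only that partial progress is underway and the full treatment is deferred to future work. So there is no paper proof to match, and your proposal should be judged as a stand-alone argument — and as such it is a program, not a proof. The outline (a regeneration decomposition in the transient regime, then a branching-process-with-migration encoding of down-crossings to locate the threshold $\d=2$, following \cite{bsCRWspeed,kzPNERW}) is the natural one, but every step that actually carries the load is asserted by analogy rather than established. The central gap is the one you name yourself: for non-nearest-neighbor steps the down-crossing counts do not form the single-type branching process with migration of the nearest-neighbor theory; they form (at best) a multi-type process indexed by the width-$B$ boundary layer, and the claim that after the ``excited'' phase the total progeny has a power tail of exact index $\d/2$ — which is what converts $\E_\eta[\tau_2-\tau_1]<\infty$ into the criterion $\d>2$ — is precisely the open problem, not a step one can import by citation. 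Until that tail index is proved for general finitely supported mean-zero $\mu$ (including the interaction between the non-Markovian cookie phase and the critical phase), the equivalence $v_0>0\iff\d>2$ is unsupported, and even the zero-speed claim for $1<\d\leq2$ collapses, since you derive it from the same tail estimate.

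Two further steps are also not available in the stated generality. First, the ``monotonicity coupling'' you invoke to handle $\d\leq1$ is a nearest-neighbor result (Zerner's comparison lemma); no such stochastic monotonicity is known for cookie walks with non-nearest-neighbor, possibly unbounded jumps, so the sandwiching argument for $X_n/n\to0$ in the recurrent regime needs a replacement (one could instead try the martingale $X_n-D_n$ of Section~\ref{sec:notation} together with control of $D_n/n$, but that too requires an argument). Second, the integrability claims for the regeneration structure — $\E_\eta[X_{\tau_2}-X_{\tau_1}]<\infty$ under only the first-moment hypothesis \eqref{cookiefm}, and the very fact that the i.i.d.\ block structure survives unbounded rightward cookie jumps — are stated but not proved; the paper's own Lemma~\ref{lem:mgpos} and the overshoot estimates such as \eqref{rightosub} suggest these are manageable, but they are genuine work. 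In short: the strategy is reasonable and consistent with what is known, but as written it proves neither direction of the speed criterion, which is exactly why the statement remains a conjecture in the paper.
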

\noindent
Some partial progress regarding Conjecture \ref{con:LLN} is already underway, and we hope to address this more fully in a future paper.

The structure of the remainder of the paper follows closely the general framework of Zerner in \cite{zERWZdstrip} in which he proved a criteria for recurrence/transience of nearest neighbor excited random walks on $\Z^d$ or on a strip $\Z \times \{0,1,\ldots,L-1\} \subset \Z^2$. 
One can map the strip $\Z \times \{0,1,\ldots,L-1\}$ bijectively to $\Z$ by $(x,y) \mapsto Lx+y$, and thus any excited random walk on a strip can be interpreted as an excited random walk on $\Z$ with uniformly bounded jumps. However, since the vertices at the ``top'' and ``bottom'' of the strip are somewhat different than the other vertices in the strip, even if one assumes a spatially i.i.d.\ cookie environment on the strip the corresponding cookie environment on $\Z$ is no longer spatially ergodic. Thus, the results in \cite{zERWZdstrip} do not directly apply to our setup. 
Most (but not all) of the arguments in \cite{zERWZdstrip} can be easily modified, however, to cover excited random walks on $\Z$ with uniformly bounded jumps. Thus, the main novelty of the results in the current paper is that we allow for cookies to induce jump distributions with unbounded support. This introduces serious technical difficulties at several places in Zerner's argument. 
We mention in particular the following. 
\begin{itemize}
 \item The proof that $\P_\eta( \limsup_{n\ra\infty} X_n = \infty) = 1$ follows easily from a martingale argument in \cite{zERWZdstrip}. However, the argument in \cite{zERWZdstrip} is no longer true for excited random walks with possibly unbounded jumps (see Remark \ref{rem:lefttransient} below) and so a new argument is needed. 
 \item The proof of the existence of a stationary distribution for the cookie environment (Section \ref{sec:cep} below) is significantly more difficult than in \cite{zERWZdstrip}. Since we are allowing for unbounded jumps the space of cookie environments is non-compact, and certain tightness results which follow immediately in \cite{zERWZdstrip} must be proved more directly in this paper (see Lemma \ref{lem:tightness} below). 
\end{itemize}
We also mention that even in the case of excited random walks with uniformly bounded jumps, the 0-1 laws in Section \ref{sec:01} are much more difficult than in \cite{zERWZdstrip} since our weak ellipticity condition in Assumption \ref{asmell} is much weaker than the uniform ellipticity assumption in \cite{zERWZdstrip}.

The outline of the paper is as follows. 
In Section \ref{sec:notation} we describe a basic martingale estimate due to Zerner that will be the basis for much of our analysis. 
Theorem \ref{thm:ergrt} is then proved in Section \ref{sec:ergrt}, where we note that a technical difficulty is proving that $\liminf_{n\ra\infty} X_n = -\infty$ and $\limsup_{n\ra\infty} X_n = \infty$ imply that every site is visited infinitely often. 
In Section \ref{sec:cep} we introduce the ``cookie environment process'' which is analagous to the environment viewed from the point of view of the particle in random walks in random environments. The main result in Section \ref{sec:cep} is the existence of a stationary measure for the cookie environment process with certain nice properties. 
In Section \ref{sec:01} we prove a 0-1 law for recurrence/transience under the additional Assumptions \ref{asmiid} and \ref{asmell}, and finally in Section \ref{sec:sharp} we use the cookie environment process and the 0-1 law to prove the sharp criterion for recurrence/transience from Theorem \ref{thm:rt}.

\section{Notation and Preliminary estimates} \label{sec:notation}

We begin by introducing some notation that will be used throughout the paper.  For any $x\in \Z$, let $\tau_x$ and $\s_x$ be the first time the random walk goes above or below $x$, respectively. That is, 
\[
 \tau_x = \inf\{ n\geq 0: \, X_n \geq x \}, \quad \text{and}\quad \s_x = \inf\{ n\geq 0: \, X_n \leq x \}. 
\]
Next, since the transitions of the excited random walk depend on the past history of the walk, we need some notation to keep track of this information. 
Define a function
$\a: \Omega \times \Z_+^\Z \ra \Omega$
by 
\[
 \a(\w,\ell)_{x,j} = \w_{x,\ell(x)+j}, \quad x \in \Z, \, j\geq 1. 
\]
That is, $\a(\w,\ell)$ is the cookie environment $\w$ modified by removing the first $\ell(x)$ cookies from site $x$ for each $x\in \Z$. 
\begin{defn}
 The function $\a$ defined above gives a natural partial ordering on $\Omega_{M,\mu}^+$. We say that $\w' \leq \w$ if there exists an $\ell \in \Z_+^\Z$ such that $\w' = \a(\w,\ell)$.
(Note that the fact that this defines a partial ordering on $\Omega_{M,\mu}^+$ relies on the fact that $\w_{x,j}(\cdot) = \mu(\cdot)$ for all $x \in \Z$, $j> M$.)
\end{defn}

\begin{defn}
 The process $\{\bar\w(n)\}_{n\geq 0}$ is defined by $\bar\w(0) = \w$ and $\bar\w(n) = \a(\w,L_{n-1})$ for $n\geq 1$, where $L_{n-1} = \{L_{n-1}(x)\}_{x\in \Z}$ as defined as in \eqref{ERWdef} is the local time of the walk after $n-1$ steps. 
That is, $\bar\w(n)$ is the cookie environment remaining just before the walk makes the $n$-th step. 
\end{defn}

\subsection{A martingale argument}
The key to the proofs of the main results in the present paper is the adaptation of a martingale argument that was first introduced by Zerner in \cite{zMERW}. 
For $\w \in \Omega_{M,\mu}^+$ and $x\in \Z$, let $\bar{d}(\w_x) = \sum_{j=1}^M d(\w_{x,j})$ be the total drift contained in the cookies at $x$. 
Also, for $x\in \Z$ and $n\geq 1$ let $D_n^x = \sum_{j=1}^{L_{n-1}(x)} d(\w_{x,j})$ be the
the total drift the excited random walk has ``consumed'' at site $x$ in the first $n-1$ steps. 
Then $D_n = \sum_{x\in \Z} D_n^x$ is the total drift consumed by the excited random walk by time $n$.
Then $M_n = X_n - D_n$ is a martingale under the quenched measure $P_\w^x$ with respect to the natural filtration $\mathcal{F}_n = \s(X_k: \, k\leq n)$. 
The optional stopping theorem implies that for any cookie environment $\w \in \Omega_{M,\mu}^+$ and integers $x < y < z$ that 
\begin{equation}\label{EXED}
 E_\w^y \left[ X_{\tau_{z} \wedge \s_{x}} \right] = y + E_\w^y \left[ D_{\tau_{z} \wedge \s_{x}} \right] 
\leq y + \sum_{k = x+1}^{z-1} \bar{d}( \w_k).
\end{equation}
To justify the application of the optional stopping theorem, it is enough to prove that 
\begin{equation}\label{etfinite}
 E_\w^y[ \tau_{z} \wedge \s_{x} ] < \infty
\end{equation}
and 
\begin{equation}\label{jumpsize}
 E_\w^y[|X_{t+1}-X_t| \, | \, \mathcal{F}_t ]  \leq C < \infty, \quad \text{for all }t < \tau_{z} \wedge \s_{x}.
\end{equation}
The expectation in \eqref{etfinite} is finite for any fixed $x,y,z$ and $\w \in \Omega_{M,\mu}^+$ since the exit time of any fixed interval has exponentially decaying tails due to the fact that there are only $M$ cookies at each site. 
The uniform upper bound in \eqref{jumpsize} holds with $C = \max_{k \in (x,z)} \max_{j\leq M} \sum_\ell |\ell| \, \w_{k,j}(\ell) < \infty$. 

In addition to the martingale estimate in \eqref{EXED}, we will also need the following minor modification. 
\begin{lem}\label{lem:mgpos}
 Let $D_n^+ = \sum_{x\geq 0} D_n^x$ be the total drift used to the right of the origin by time $n$. 
If the cookie environment $\w$ is such that 
\begin{itemize}
 \item $\w_x = \{\w_{x,j}\}_{j\geq 1} \in \mathcal{M}_+^M \times \{\mu\}^{\N}$ for all $x \geq 0$
 \item $P_\w^x( \limsup_{n\ra\infty} X_n = \infty ) = 1$ for all $x\geq 0$
\end{itemize}
then 
 \begin{equation}\label{EwDplus}
 E_\w^x[D_{\tau_n}^+] 
\leq n + E_\w^x\left[(X_{\tau_n} - n)\ind{X_{\tau_n-1} \geq 0} \right], \quad \forall x \in [0,n).
\end{equation}
\end{lem}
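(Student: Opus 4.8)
The plan is to exploit the same optional-stopping machinery used to establish \eqref{EXED}, but applied to a version of the martingale that only tracks drift consumed to the right of the origin. Recall $M_n = X_n - D_n$ is a quenched martingale; since all cookies to the right of $0$ have nonnegative drift (and cookies to the left are not under our control in this lemma), we cannot directly play the same game with $D_n$, so instead I would work with the stopped process at $\tau_n$ and decompose the drift into $D^+$ and $D^- = D - D^+$. The idea is that along the way to first reaching level $n$, the walk may wander to the left of the origin and consume drift there, but that drift is being \emph{subtracted} in a favorable direction, so discarding it can only help the inequality. More precisely, I would first argue $X_{\tau_n} - D_{\tau_n}^+ = M_{\tau_n} + D_{\tau_n}^-$, and since $M_{n\wedge \tau_n}$ is a martingale started at $X_0 = x$, if we can apply optional stopping we get $E_\w^x[X_{\tau_n}] - E_\w^x[D_{\tau_n}^+] = x + E_\w^x[D_{\tau_n}^-] \leq x$ — wait, but $D^-$ need not be nonpositive. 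So the correct route is to bound $E_\w^x[D_{\tau_n}^+]$ directly.

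Here is the cleaner approach I would actually pursue. Decompose $D_{\tau_n}^+ = \sum_{x \ge 0} D_{\tau_n}^x$. Reorganize as a sum of the individual drift increments $d(\w_{y,j})$ over those $(y,j)$ with $y \ge 0$ that were ``used'' before time $\tau_n$. The martingale identity gives, upon valid optional stopping at $\tau_n$ (finiteness of $E_\w^x[\tau_n]$ needs the second hypothesis, that $\limsup X_n = \infty$ a.s., to ensure $\tau_n < \infty$ a.s., but finiteness of the \emph{expectation} requires more care — see below), that
\[
 E_\w^x[X_{\tau_n}] = x + E_\w^x[D_{\tau_n}] = x + E_\w^x[D_{\tau_n}^+] + E_\w^x[D_{\tau_n}^-].
\]
Rearranged, $E_\w^x[D_{\tau_n}^+] = E_\w^x[X_{\tau_n}] - x - E_\w^x[D_{\tau_n}^-]$. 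Now split $E_\w^x[X_{\tau_n}]$ according to whether $X_{\tau_n - 1} \ge 0$ or $X_{\tau_n-1} < 0$. On the event $\{X_{\tau_n-1} < 0\}$ the walk jumps from a negative site to a site $\ge n$; on $\{X_{\tau_n-1} \ge 0\}$ we write $X_{\tau_n} = n + (X_{\tau_n} - n)$. To match the claimed bound I want to show $E_\w^x[X_{\tau_n} \ind{X_{\tau_n-1}<0}] \le E_\w^x[-D_{\tau_n}^-] + (\text{something} \le 0)$, i.e.\ the overshoot contribution when leaping from the left is controlled by the (negative of the) left-side drift consumed. Actually the slickest version: since $D_{\tau_n}^- = \sum_{y<0} \sum_{j \le L_{\tau_n - 1}(y)} d(\w_{y,j})$ and for $y < 0$, $X_{\tau_n - 1} < 0$ is implied only on that last jump — I would instead directly verify that $X_{\tau_n} \le n + (X_{\tau_n} - n)\ind{X_{\tau_n-1}\ge 0} + [\text{overshoot from the left}]$, and absorb the left-overshoot into $-E_\w^x[D_{\tau_n}^-]$ plus the drift used at the origin-region, then re-derive \eqref{EwDplus} by collecting terms. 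Let me state this as:
\begin{align*}
 E_\w^x[D_{\tau_n}^+]
 &= E_\w^x[X_{\tau_n}] - x - E_\w^x[D_{\tau_n}^-] \\
 &\le E_\w^x[X_{\tau_n} \ind{X_{\tau_n - 1} \ge 0}] + E_\w^x[X_{\tau_n}\ind{X_{\tau_n-1}<0}] - x - E_\w^x[D_{\tau_n}^-],
\end{align*}
and then on $\{X_{\tau_n-1}\ge 0\}$ bound $X_{\tau_n} = n + (X_{\tau_n}-n)$, while on $\{X_{\tau_n-1}<0\}$ use $X_{\tau_n}\le n + \text{overshoot}$ and a separate comparison showing the left-side overshoot expectation plus $x$ dominates $-E_\w^x[D_{\tau_n}^-]$ — this last comparison I would prove by a strong-Markov / optional-stopping argument applied to the ``left portion'' of the walk. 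Carrying $n$ through and dropping nonpositive terms yields \eqref{EwDplus}.

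The main obstacle, as in the derivation of \eqref{EXED}, is \textbf{justifying the application of the optional stopping theorem at $\tau_n$}. Unlike the bounded stopping time $\tau_z \wedge \s_x$, the time $\tau_n$ is not an exit time of a bounded interval: the walk can descend arbitrarily far to the left before first exceeding $n$, so $E_\w^x[\tau_n]$ need not even be finite, and the jump sizes, while having finite first moment by Assumption~\ref{asmic} once cookies are exhausted (the $\mu$ steps are bounded by $B$ via Assumption~\ref{asmmu}), are not uniformly bounded on the excited steps. The remedy I would use: apply optional stopping at the bounded time $\tau_n \wedge \s_{-k}$ for each $k$, obtaining $E_\w^x[X_{\tau_n \wedge \s_{-k}}] = x + E_\w^x[D_{\tau_n \wedge \s_{-k}}]$, then let $k \to \infty$. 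The hypothesis $P_\w^y(\limsup X_n = \infty) = 1$ for all $y \ge 0$ is precisely what guarantees $\tau_n \wedge \s_{-k} \to \tau_n$ a.s.\ (the walk a.s.\ eventually exceeds $n$, having at worst restarted from some negative site — here one needs that from below the walk still returns, which follows since excursions below $0$ consume no useful cookies to the right and the first hypothesis ensures the relevant sites carry $\mathcal{M}_+$ cookies for $y \ge 0$). Passing the limit inside the expectations requires a uniform integrability / monotone convergence argument: $D_{\tau_n \wedge \s_{-k}}^+$ is nondecreasing in $k$ (more of the positive-side cookies get consumed) so monotone convergence handles the $D^+$ part directly, giving $E_\w^x[D_{\tau_n}^+] = \lim_k E_\w^x[D^+_{\tau_n\wedge\s_{-k}}]$; and for $E_\w^x[X_{\tau_n\wedge\s_{-k}}]$ one splits into the contribution at $\tau_n$ (controlled as above) versus at $\s_{-k}$ (which one must show vanishes, again using the $\limsup = \infty$ hypothesis and finite-expected-overshoot from Assumption~\ref{asmic}). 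This limiting argument, and in particular controlling the overshoot $X_{\tau_n} - n$ and the leftward overshoot below arbitrary negative levels, is where essentially all the work lies; the rest is bookkeeping with the drift decomposition.
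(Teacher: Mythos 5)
Your route has a fatal gap at the very step you flag as ``the main obstacle.'' You propose to recover $E_\w^x[X_{\tau_n}] = x + E_\w^x[D_{\tau_n}]$ by optional stopping at $\tau_n\wedge\s_{-k}$ and letting $k\to\infty$, asserting that the contribution at $\s_{-k}$ must vanish. It does not. Take the baseline case of a simple symmetric random walk started at $0$ (all cookies with zero drift): $X_k$ is a martingale, $\tau_n<\infty$ a.s., but $E[X_{\tau_n}] = n \neq 0 = X_0$, and indeed $E[X_{\s_{-k}}\ind{\s_{-k}<\tau_n}]$ converges to roughly $-n$, not $0$ (the classical gambler's-ruin leakage). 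This is not a technicality to be patched by Assumption~\ref{asmic}; the identity $E_\w^x[X_{\tau_n}] = x + E_\w^x[D_{\tau_n}]$ on which your whole decomposition rests is simply false in general, and the inequality you are trying to prove would contradict it otherwise (your own bookkeeping, specialized to $D^-\equiv 0$, would force $E_\w^x[(X_{\tau_n}-n)\ind{X_{\tau_n-1}<0}]\le 0$, which is false when the walk can leap from the negative half-line past $n$).

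There is a second, structural problem: the lemma assumes nothing about the cookie environment at sites $y<0$. In particular $d(\w_{y,j})$ for $y<0$ need not have any sign, and need not even be finite, so the quantity $D_n^-$ that your decomposition relies on may not be defined, and $M_n = X_n - D_n$ may not be a martingale. Any argument that isolates $D^+$ \emph{through} $D^-$ is therefore working with objects the hypotheses do not control. The paper's proof sidesteps both problems simultaneously: instead of the full martingale, it constructs the auxiliary process $Y_k^{(n)}$ that records the walk only at the return times $u_k$ to $[0,\infty)$, collapses each leftward excursion to a single effective step, caps the terminal value at $n$ when the last jump came from the left, and shows directly that $M_k^{(n)} = Y_k^{(n)} - D_{u_k\wedge\tau_n}^+$ is a \emph{submartingale}. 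This never looks at $D^-$, never requires finite drifts on the negative half-line, never needs a gambler's-ruin cancellation, and the cap at $n$ is precisely what deletes the troublesome left-overshoot term and produces the indicator $\ind{X_{\tau_n-1}\ge 0}$ in the bound. If you want to salvage your approach you would essentially need to re-invent this ``observe only to the right of the origin'' submartingale, at which point you would be following the paper's proof.
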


\begin{rem}
 We note that the conditions on the environment in Lemma \ref{lem:mgpos} are slightly weaker than assuming $\w \in \Omega_{M,\mu}^+$. 
Indeed, the first condition is simply that the cookie environment to the right of the origin is the same as some cookie environment in $\Omega_{M,\mu}^+$. Moreover, it will follow from Proposition \ref{limsupprop} below that the second condition holds for $\eta$-a.e.\ cookie environment $\w \in \Omega_{M,\mu}^+$. 
\end{rem}

\begin{proof}
The proof is accomplished by a modification of the martingale argument leading to \eqref{EXED}. 
For $n\geq 1$ we will modify the random walk process by only recording the times when the random walk is to the right of the origin. That is, any jumps to the left of the origin are replaced by a jump to wherever the walk then jumps back to the right of the origin. However, we make one further modification; if when the walk returns to the right of the origin it lands in $[n,\infty)$ then we make the walk land exactly at $n$. Note that the modification done in this way doesn't change the local time of the sites $x \in [0,n)$ up until time $\tau_n$. Secondly, any jumps to the left of the origin are replaced by a jump somewhere in $[0,n]$ so that the expected displacement only increases. Thus, if we subtract the original drift (that is the expected displacement of the unmodified excited random walk) at all the sites in $[0,n)$ that are visited then we get a sub-martingale. 
To make this precise, let $u_0 = 0$ and let $u_k = \inf \{ i > u_{k-1} : \, X_i \geq 0 \}$ and for $n\geq 1$ fixed let $Y_k^{(n)}$ be defined for $k\geq 1$ by 
\[
 Y_k^{(n)} = 
\begin{cases}
 X_{u_k} & \text{if } u_k < \tau_n \\
 X_{\tau_n} & \text{if } u_k \geq \tau_n \text{ and } X_{\tau_n-1} \geq 0 \\
 n & \text{if } u_k \geq \tau_n \text{ and } X_{\tau_n-1} < 0.
\end{cases}
\]
Then, we claim that $M_k^{(n)} = Y_k^{(n)} - D_{u_k \wedge \tau_n}^+$ is a sub-martingale with respect to the filtration $\mathcal{F}_{u_k} = \s(X_i, \, i\leq u_k)$. 
Note that $M_k^{(n)}$ is constant for $u_k \geq \tau_n$. However, on the event $\{u_k < \tau_n\} \in \mathcal{F}_{u_k}$ we have  
\begin{align*}
E_\w&\left[ M_{k+1}^{(n)} \, | \, \mathcal{F}_{u_k} \right] - M_k^{(n)} \\
&= E_\w\left[ \left( X_{u_{k+1}} - X_{u_k} \right)\ind{u_{k+1} = u_k+1} + \left( X_{u_{k+1}}\wedge n - X_{u_k} \right) \ind{u_{k+1} > u_k + 1} \, | \, \mathcal{F}_{u_k} \right]  \\
&\qquad - E_\w\left[ X_{u_k+1} - X_{u_k}  \, | \, \mathcal{F}_{u_k} \right] \\
&= E_\w\left[ \left( X_{u_{k+1}}\wedge n - X_{u_k+1} \right) \ind{u_{k+1} > u_k + 1} \right] \\
&\geq 0, 
\end{align*}
where the last inequality follows from the fact that on the event $\{u_{k+1} > u_k + 1\}$ we have $X_{u_k+1}< 0 \leq X_{u_{k+1}}$. 
Since $M_k^{(n)}$ is a sub-martingale, \eqref{EwDplus} follows easily from optional stopping. 
\end{proof}

\section{Proof of Theorem \ref{thm:ergrt}}\label{sec:ergrt}


The key to proving part \ref{K01law} of Theorem \ref{thm:ergrt} is the following Proposition. 

\begin{prop}\label{limsupprop}
 Let Assumptions \ref{asmMmu}--\ref{asmic} hold. Then $\P_\eta( \limsup_{n\ra\infty} X_n = \infty ) = 1$. 
\end{prop}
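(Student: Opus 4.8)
The plan is to argue by contradiction. Assume $\P_\eta(\limsup_n X_n < \infty) > 0$; since this event equals $\bigcup_{R\in\N}\{\sup_n X_n \le R\}$, fix $R$ with $\P_\eta(\sup_n X_n \le R) > 0$, so that $\tau_{R+1} = \infty$ on this event. First I would dispose of the confined case: on $\{\sup_n X_n \le R\}\cap\{\inf_n X_n > -\infty\}$ the walk lives in a fixed finite interval, so after finitely many steps all cookies in that interval have been consumed and the walk evolves as a homogeneous random walk with step law $\mu$, which by Assumption \ref{asmmu} ($\mu$ has zero mean, support in $[-B,B]$, and span $1$) is recurrent and hence a.s.\ leaves any bounded interval — a contradiction. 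Moreover $\inf_n X_n$ cannot be a finite limit point $L$ on $\{\sup_n X_n \le R\}$, since then $L$ is visited infinitely often after its cookies are exhausted and each such visit carries probability $\mu([-B,-1]) > 0$ of stepping below $L$. Thus, writing $q := \P_\eta(\sup_n X_n \le R,\ \inf_n X_n = -\infty) > 0$, we have $\{\sup_n X_n \le R,\ \inf_n X_n = -\infty\}\subseteq\{\sigma_{-m} < \tau_{R+1}\}$ for every $m\ge 1$, so $\P_\eta(\sigma_{-m} < \tau_{R+1}) \ge q$ for all $m$.

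Next I would apply the martingale $M_n = X_n - D_n$ to the bounded interval $(-m,R+1)$. Since the exit time $\tau_{R+1}\wedge\sigma_{-m}$ of a fixed interval has exponentially decaying tails and the conditional jump sizes are integrable, optional stopping applies, and because $D_n \ge 0$ it gives
\[
 \E_\eta\!\left[X_{\tau_{R+1}\wedge\sigma_{-m}}\right] = \E_\eta\!\left[D_{\tau_{R+1}\wedge\sigma_{-m}}\right] \ge 0 .
\]
On the other hand, splitting according to which side of the interval is exited and using $X_{\sigma_{-m}} \le -m$ on $\{\sigma_{-m} < \tau_{R+1}\}$ together with $\{\tau_{R+1}\le\sigma_{-m}\}\uparrow\{\tau_{R+1}<\infty\}$,
\[
 \E_\eta\!\left[X_{\tau_{R+1}\wedge\sigma_{-m}}\right] \le -m\,\P_\eta(\sigma_{-m} < \tau_{R+1}) + \E_\eta\!\left[X_{\tau_{R+1}}^+\,;\,\tau_{R+1}<\infty\right] \le -mq + V_R ,
\]
where $V_R := \E_\eta[X_{\tau_{R+1}}^+\,;\,\tau_{R+1}<\infty]$ does not depend on $m$. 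Once one establishes $V_R < \infty$, taking $m > 2V_R/q$ forces $\E_\eta[X_{\tau_{R+1}\wedge\sigma_{-m}}] \le -mq/2 < 0$, contradicting the displayed lower bound, which finishes the proof.

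The crux is therefore the finiteness of $V_R$, i.e.\ integrability of the position at the first time the walk exceeds $R$ on the event this ever occurs. I would write $X_{\tau_{R+1}} \le R + J$ with $J$ the final jump, made from a site $Y \le R$ on its $\ell$-th visit: the contribution of steps with $\ell > M$ is at most $B$ since $\mu$ is supported in $[-B,B]$, while for a cookie jump ($\ell\le M$) from site $k$, conditioning on the history at the time of the $\ell$-th visit to $k$ reduces its overshoot contribution to the tail sum $\sum_{z\ge R+1-k} z\,\w_{k,\ell}(z)$ weighted by the probability that $k$ is reached before $\tau_{R+1}$. Summing over $k\le R$ and $\ell\le M$, using stationarity, the first-moment bound \eqref{cookiefm} of Assumption \ref{asmic}, and an estimate on the hitting probabilities of deep sites, one obtains a finite total. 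This overshoot analysis is exactly the step with no analogue in the uniformly bounded-jump setting of \cite{zERWZdstrip} (where $X_{\tau_{R+1}} \le R+B$ trivially; cf.\ Remark \ref{rem:lefttransient}), and making the interplay between the hitting probabilities and the first-moment assumption add up is the main technical obstacle.
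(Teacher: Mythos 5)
Your proof takes a genuinely different route from the paper's, and it has a gap precisely at the point you flag as the main obstacle: the finiteness of $V_R=\E_\eta[X_{\tau_{R+1}}^+\,;\,\tau_{R+1}<\infty]$ does not follow from Assumption~\ref{asmic}, which gives only a first moment on the cookie jumps. Your overshoot sum ranges over the \emph{unbounded} set of sites $\{y\le R\}$ from which the final jump could originate. Even with the best hitting-probability decay a nonnegative-drift walk can offer, namely $\P_\eta(\text{reach }y\text{ before }\tau_{R+1})\lesssim C_R/|R-y|$, the contribution of cookie $j$ to $V_R$ becomes
\[
\sum_{u\ge 1}\frac{C_R}{u}\,\E_\eta\bigl[(W_{0,j}-u)_+\bigr],
\]
which is of the order of $\E_\eta[W_{0,j}\log W_{0,j}]$ and need not be finite under \eqref{cookiefm}; without the $1/u$ factor it is of order $\E_\eta[W_{0,j}^2]$. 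So the interplay between hitting probabilities and the first moment does not in fact add up, and I don't see how to salvage your route without strengthening Assumption~\ref{asmic}.

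The paper avoids ever forming an overshoot expectation over an infinite set of sites. Instead of freezing the right endpoint at $R+1$ and sending the left endpoint to $-\infty$, it iterates exits from the nested intervals $(2S_{k-1}-m,m)$, where $S_{k-1}$ is the previous exit position. For the walk started at a deep negative $x$ it uses only that $X_n$ is a submartingale (not the full martingale $X_n-D_n$) to get
\[
P_{\w'}^x(\s_{2x-m}<\tau_m)\ \le\ \tfrac12+\tfrac{1}{2(m-x)}\,E_{\w'}^x\bigl[(X_{\tau_m\wedge\s_{2x-m}}-m)_+\bigr],
\]
and then bounds the overshoot by $\sum_{y\in(2x-m,\,m)}\sum_{j\le M}E_\w[(W_{y,j}+y-m)_+]+B$, a sum over the \emph{bounded} set of sites the walk can occupy before exiting. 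The normalization by the interval length $2(m-x)$ — absent in your $V_R$ — is exactly what makes the first moment enough: as $x\to-\infty$ this becomes a Ces\`aro average to which Birkhoff's ergodic theorem applies, with limit $\sum_j E_\eta[(W_{0,j}-N)_+]\to0$ as $N\to\infty$. Your preliminary reductions (ruling out confinement and a finite $\liminf$, hence $\P_\eta(\s_{-m}<\tau_{R+1})\ge q$ for all $m$) are correct and in the spirit of Lemma~\ref{infsupR}; the missing idea is that both ends of the interval must grow together so the overshoot can be \emph{averaged} rather than summed.
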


\begin{rem}\label{rem:lefttransient}
Since we have assumed that our cookie environments have a non-negative drift at each step, it is natural to expect that $\limsup_{n\ra\infty} X_n = \infty$. 
However, since we are allowing for possibly unbounded jumps we cannot conclude that this is the case for every cookie environment $\w \in \Omega_{M,\mu}^+$. 
For instance, let $\w= \{\w_{x,j}(\cdot)\}_{x\in \Z, j\geq 1}$ be given by 
\[
 \w_{x,j} =
\begin{cases}
 \frac{1}{2} \d_{-1} + \frac{1}{2} \d_{1} & x \geq -1 \text{ or } j \geq 2\\
 \left( 1 - \frac{1}{x^2} \right) \d_{-1} + \frac{1}{x^2} \d_{x^2-1} & x \leq -2 \text{ and } j= 1. 
\end{cases}
\]
(Here, $\d_x \in M_1(\Z)$ denotes the Dirac measure at $x\in \Z$.)
Note that for this cookie environment $\w$,  $d(\w_{x,j}) = 0$ for all $x\in \Z$, $j\geq 1$ but  
\[
 P_\w( X_n = -n, \, \forall n\geq 0) = \frac{1}{4} \prod_{k=2}^{\infty} \left( 1 - \frac{1}{k^2} \right) = \frac{1}{8} > 0. 
\]
In contrast, if instead we had restricted ourselves to excited random walks with uniformly bounded jumps, then an easy adaptation of the proof of \cite[Lemma 3]{zERWZdstrip} could be used to show that $P_\w( \limsup_{n\ra\infty} X_n = \infty) = 1$ for \emph{all} cookie  environments $\w$
\end{rem}
\begin{proof}
Clearly it is enough to show that $\P_\eta( \tau_m = \infty) = 0$ for all $m\geq 1$, or equivalently
\begin{equation}\label{goright}
 P_\w(\tau_m = \infty) = 0, \quad \forall m\geq 1, \quad \text{for $\eta$-a.e. cookie environment $\w$.}
\end{equation}
For $m\geq 1$ fixed, define the random variables $\{s_k\}_{k\geq 0}$ and $\{S_k\}_{k\geq 0}$ recursively by letting
\[
 s_0 = S_0 = 0, \quad\text{and}\quad 
\begin{cases}
 s_k = \tau_{m} \wedge \s_{2 S_{k-1} - m} \\
 S_k = X_{s_k}, 
\end{cases}
\quad\text{for } k\geq 1. 
\]
Note that these random variables have the following properties. 
\begin{itemize}
 \item If $S_k = X_{s_k} \geq m$ for some $k$ then $s_j = s_k$ for all $j>k$ (and also $S_j = S_k$ for all $j>k$). 
Therefore, $\tau_m = \lim_{k\ra\infty} s_k$ almost surely. 
 \item If $S_k = X_{s_k} < m$, then $s_{k+1}$ is the exit time of the interval $(2S_{k}-m, m)$ and $S_k$ is the midpoint of this interval. 
 \item If $S_k < m$, then $S_k \leq m (1-2^k)$. 
\end{itemize}
Now, these properties imply that
\begin{align}
& P_\w\left( s_k < \tau_{m} \right) = P_\w\left( s_{k-1} < s_k < \tau_{m} \right) \nonumber \\
&= \sum_{n=1}^\infty \sum_{x \leq m(1-2^{k-1})} \sum_{\w' \leq \w} P_\w\left( s_{k-1} = n, \, S_{k-1} = x, \,  \bar\w(n) = \w' \right) P_{\w'}^{x}\left( \s_{2x-m} < \tau_{m} \right). \label{Psksm}
\end{align}
(There are uncountably many cookie environments $\w'$ with $\w' \leq \w$. However, for $n$ and $x$ fixed there are only finitely many $\w'$ for with the first probability inside the sum is non-zero.)
We will show below that
\begin{equation}\label{Pwxuub}
 \lim_{x\ra-\infty} \sup_{\w'\leq \w} P_{\w'}^x \left( \s_{2x-m} < \tau_m \right) \leq \frac{1}{2}, \quad \eta\text{-a.s.} 
\end{equation}
Postponing the proof of \eqref{Pwxuub} for now, we note that this implies that for $\eta$-a.e.\ cookie environment $\w$ there exists a $k_0 = k_0(\w)$ such that 
\[
 P_{\w'}^x\left( \s_{2x-m} < \tau_m \right) \leq \frac{3}{4} , \qquad \forall \w' \leq \w, \, x \leq m(1-2^{k_0-1}). 
\]
Therefore, for any $k\geq k_0$ we can conclude from \eqref{Psksm} that $P_\w( s_k < \tau_m ) \leq \frac{3}{4} P_\w( s_{k-1} < \tau_m )$. 
Iterating this we obtain that $P_\w(s_k < \tau_m) \leq (3/4)^{k-k_0}$ for all $k\geq k_0$, and thus 
\[
 P_\w( \tau_m = \infty) = \lim_{k\ra\infty} P_\w\left( s_k < \tau_m \right) = 0, \quad \eta\text{-a.s.}
\]

To complete the proof of the first part of the Proposition it thus remains to prove \eqref{Pwxuub}. 
To this end, first note that since $\w \in \Omega_{M,\mu}^+$ the random walk is a submartingale under the measure $P_{\w'}^x$ for any $\w'\leq \w$ and therefore
\begin{align*}
 x &\leq E_{\w'}^x\left[ X_{ \tau_m \wedge \s_{2x-m} } \right] \\
&= E_{\w'}^x\left[ X_{\tau_m} \ind{\tau_m < \s_{2x-m}} \right] + E_{\w'}^x\left[ X_{\s_{2x-m}} \ind{\tau_m > \s_{2x-m}} \right] \\
&\leq 2(x-m) P_{\w'}^x( \s_{2x-m} < \tau_m ) + m + E_{\w'}^x\left[ \left( X_{ \tau_m \wedge \s_{2x-m} } - m \right)_+ \right],
\end{align*}
from which it follows that 
\begin{equation}\label{Pwxuub1}
 P_{\w'}^x( \s_{2x-m} < \tau_m ) \leq \frac{1}{2} + \frac{1}{2(m-x)}E_{\w'}^x\left[ \left( X_{ \tau_m \wedge \s_{2x-m} } - m \right)_+ \right]. 
\end{equation}
To bound the last expectation on the right, for any cookie environment $\w$ we can expand the quenched measure $P_\w$ to contain an independent family of random variables $\{W_{y,j}\}_{y \in \Z,\, j\geq 1}$ such that $W_{y,j}$ has distribution $\w_{y,j}$ for each $y \in \Z$ and $j\geq 1$. 
The excited random walk can then be constructed by letting $W_{y,j}$ be the jump that the walk makes upon the $j$-th visit to the site $y$.
Now, suppose that $\w' = \a(\w,\ell) \leq \w$. Then, 
\begin{align}
& E_{\w'}^x\left[ \left( X_{ \tau_m \wedge \s_{2x-m} } - m \right)_+ \right] \nonumber \\
&= E_{\w'}^x\left[ \sum_{y=2x-m+1}^{m-1} \sum_{j\geq 1} \sum_{k=0}^\infty \left( X_{ \tau_m \wedge \s_{2x-m} } - m \right)_+ \ind{X_k=y, \, L_k(y) = j, \, \tau_m \wedge \s_{2x-m} = k+1} \right] \nonumber \\
&\leq   \sum_{y=2x-m+1}^{m-1} \sum_{j=1}^{M-\ell(y)} \sum_{k=0}^\infty E_{\w'}^x\left[ \ind{X_k=y, \, L_k(y) = j, \, \tau_m \wedge \s_{2x-m} > k} \right]E_{\w'}\left[ \left( W_{y,j} + y - m \right)_+ \right]  \nonumber \\ 
&\qquad + B P_{\w'}^x\left( \tau_m \wedge \s_{2x-m} < \infty \right) \nonumber \\
&\leq \sum_{y=2x-m+1}^{m-1} \sum_{j=1}^{M-\ell(y)} E_{\w'}\left[ \left( W_{y,j} + y - m \right)_+ \right] + B \nonumber \\
&\leq \sum_{y=2x-m+1}^{m-1} \sum_{j=1}^{M} E_{\w}\left[ \left( W_{y,j} + y - m \right)_+ \right] + B. \label{rightosub}
\end{align}
where in the first inequality we used the fact that $\w'_{y,j} = \w_{y,\ell(y)+j}= \mu$ has support in $[-B,B]$ for $j + \ell(y) > M$, and in the last inequality we used that $\w' = \a(\w,\ell) \leq \w$.
Applying this upper bound to \eqref{Pwxuub1} we obtain that 
\begin{align*}
 P_{\w'}^x( \s_{2x-m} < \tau_m ) &\leq \frac{1}{2} + \frac{B}{2(m-x)} + \frac{1}{2(m-x)} \sum_{y=2x-m+1}^{m-1} \sum_{j=1}^M E_{\w}\left[ (W_{y,j} + y-m)_+ \right] .
\end{align*}
Note that this upper bound is uniform over all $\w' \leq \w$. 
Therefore, to complete the proof of \eqref{Pwxuub} we need only to show that 
\begin{equation}\label{EWpsum}
 \lim_{x\ra -\infty} \frac{1}{2(m-x)}\sum_{y=2x-m+1}^{m-1} \sum_{j=1}^M E_{\w}\left[ (W_{y,j} + y-m)_+ \right] = 0, \quad \eta\text{-a.s.}
\end{equation}
To this end, note that for any $N\geq 2$ 
\begin{align*}
& \limsup_{x\ra - \infty} \frac{1}{2(m-x)}\sum_{y=2x-m+1}^{m-1} \sum_{j=1}^M E_{\w}\left[ (W_{y,j} + y-m)_+ \right] \\
&\leq \lim_{x\ra - \infty} \frac{1}{2(m-x)} \left\{ \sum_{y=2x-m+1}^{m-N} \sum_{j=1}^M E_{\w}\left[ (W_{y,j} -N)_+ \right] +\sum_{y=m-N+1}^{m-1} \sum_{j=1}^M E_{\w}\left[ |W_{y,j}| \right] \right\} \\
&= \sum_{j=1}^M \E_\eta \left[ (W_{0,j} - N)_+ \right], \qquad \eta\text{-a.s.},
\end{align*}
where the last equality follows from Birkhoff's Ergodic Theorem. 
Since $\d = \sum_{j=1}^M \E_\eta[W_{0,j}] < \infty$, this upper bound can be made arbitrarily small by taking $N\ra\infty$. 
\end{proof}

\begin{lem}\label{infsupR}
Let Assumptions \ref{asmMmu}--\ref{asmic} hold. 
Then 
$$\P_\eta\left( \liminf_{n\ra\infty} X_n = -\infty, \, \limsup_{n\ra\infty} X_n = \infty \right) 
= \P_\eta(\mathcal{R}).$$
\end{lem}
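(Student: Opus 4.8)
The inequality $\P_\eta(\mathcal{R}) \le \P_\eta(\liminf_n X_n = -\infty,\ \limsup_n X_n = \infty)$ is trivial, since a recurrent walk is unbounded in both directions. The plan is therefore to prove the reverse inequality, i.e.\ that the oscillation event $\mathcal{O} := \{\liminf_n X_n = -\infty\} \cap \{\limsup_n X_n = \infty\}$ satisfies $\P_\eta(\mathcal{O}\setminus\mathcal{R}) = 0$. Throughout I would let $V$ denote the (random) set of sites visited infinitely often, so that $\mathcal{R} = \{V = \Z\}$, and work with the independent family $\{W_{y,j}\}$ used in the proof of Proposition~\ref{limsupprop}.

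First I would prove a dichotomy: $\P_\eta(V \notin \{\emptyset,\Z\}) = 0$. If $w \in V$ then $w$ is visited more than $M$ times, so the walk makes infinitely many jumps from $w$, the $(M{+}j)$-th of which is $W_{w,M+j}\sim\mu$, and these are i.i.d.; since $\mu\neq\d_0$, for every atom $z$ of $\mu$ one has $W_{w,M+j}=z$ for infinitely many $j$, whence $w+z$ is also visited infinitely often, i.e.\ $w+z\in V$. Because $\supp\mu$ has span $1$ the $\mu$-random walk is irreducible, so iterating this observation along a $\mu$-path from $w$ to an arbitrary site shows that $V\neq\emptyset$ forces $V=\Z$. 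Hence $\mathcal{R}^c=\{V=\emptyset\}$ up to $\P_\eta$-null sets, and it remains to show $\P_\eta(\mathcal{O}\cap\{V=\emptyset\})=0$; in fact I would show the stronger statement $\{V=\emptyset\}\subseteq\mathcal{T}_+$ up to null sets.

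The heart of the argument is a Borel--Cantelli estimate. By Assumption~\ref{asmic}, $\E_\eta|W_{0,j}|<\infty$ for each $j\le M$, so by stationarity and $\sum_{w\ge1}\P_\eta(|W_{0,j}|\ge w)\le\E_\eta|W_{0,j}|$ we get, for every fixed $L\in\Z$, that $\sum_{w\ge L}\P_\eta(\exists\,j\le M:\ |W_{w,j}|\ge w-L+1)<\infty$; hence $\P_\eta$-a.s.\ only finitely many sites $w\ge L$ admit a cookie that can induce a jump of magnitude $\ge w-L+1$. Now I would fix $L$ and examine the down-crossings of $L$ (times $n$ with $X_{n-1}\ge L>X_n$): a down-crossing using the base law $\mu$ must start in the bounded interval $[L,L+B-1]$, which on $\{V=\emptyset\}$ is visited only finitely often; a down-crossing using a cookie at some $w\ge L$ forces that cookie to jump by magnitude $\ge w-L+1$, so by the Borel--Cantelli fact $w$ lies in a fixed finite set, each site of which has at most $M$ cookies. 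Thus on $\{V=\emptyset\}$ the walk down-crosses $L$ only finitely often, so eventually it is $\ge L$ or eventually $\le L-1$; the latter contradicts $\limsup_n X_n=\infty$ (Proposition~\ref{limsupprop}). Intersecting over $L\in\Z$ gives $X_n\to\infty$ a.s.\ on $\{V=\emptyset\}$, so $\mathcal{O}\cap\{V=\emptyset\}$ is null, which completes the proof.

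I expect the last step to be the main obstacle: unbounded backward jumps could a priori let the walk oscillate while repeatedly escaping into fresh territory, and the naive first-moment bound on the number of long-jump crossings of a fixed level is only logarithmically divergent. What saves the day is the ``one jump consumes one cookie'' bookkeeping combined with the summability $\sum_w\P_\eta(|W_{0,j}|\ge w)<\infty$ from Assumption~\ref{asmic}, which forces the set of sites from which a long backward jump is even \emph{possible} to be a.s.\ finite relative to any fixed level.
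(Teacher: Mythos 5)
Your proof is correct, and it rests on the same two pillars as the paper's argument: Assumption~\ref{asmic} controls the number of long left-jumps (a long left-jump must consume a cookie, and the cookie jump tails are summable), and irreducibility of $\mu$ promotes recurrence of one bounded set to recurrence of every site. The structural difference is one of book-keeping. The paper fixes the single interval $[0,B)$ and bounds the \emph{expectation} of the number of jumps from $[B,\infty)$ directly into $(-\infty,0)$, which immediately shows that on $\mathcal{O}$ the walk must land \emph{inside} $[0,B)$ infinitely often; the irreducibility step then finishes. You instead establish the dichotomy $V\in\{\emptyset,\Z\}$ up front, and for each fixed level $L$ run a Borel--Cantelli argument (summability of $\P_\eta(\exists j\le M:|W_{w,j}|\ge w-L+1)$ over $w\ge L$) to bound the cookie-induced down-crossings of $L$, using $V=\emptyset$ to dispose of the $\mu$-induced down-crossings from $[L,L+B-1]$; together with Proposition~\ref{limsupprop} this gives $X_n\to+\infty$ on $\{V=\emptyset\}$, hence $\mathcal{O}\cap\{V=\emptyset\}$ is null. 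The per-level Borel--Cantelli argument is heavier than the paper's single first-moment computation at a fixed window, but it makes explicit the intermediate fact that $\{V=\emptyset\}\subseteq\mathcal{T}_+$ up to null sets, which is a useful way to phrase the conclusion, and both versions draw on Assumptions~\ref{asmMmu}--\ref{asmic} in exactly the same places.
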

\begin{proof}
Recall that $B$ is the uniform bound on the jumps from distribution $\mu$ as in Assumption \ref{asmmu}. 
Consider the number of times that the random walk makes a jump from some site $y \geq B$ to the left of the origin. By Assumption \ref{asmMmu} and the fact that $\w \in \Omega_{M,\mu}^+$ we can conclude that any such jump must have occured during the first $M$ visits to $y$. Therefore, 
\begin{align*}
 \E_\eta\left[ \sum_{n\geq 0} \ind{X_n \geq B, \, X_{n+1} < 0} \right] 
&= \sum_{n\geq 0}  \sum_{j\leq M} \sum_{y\geq B} \P_\eta( X_n = y, \, L_{n-1}(y) = j, \, X_{n+1} < 0 ) \\
&\leq \sum_{j\leq M} \sum_{y\geq B} \P_\eta( W_{y,j} + y < 0 ) \\
&= \sum_{j\leq M} \sum_{y \geq B} \P_\eta( W_{0,j} + y < 0)\\
&= \sum_{j\leq M} \E_\eta\left[ ( W_{0,j} + B)_- \right] < \infty,
\end{align*}
where the last line is finite by Assumption \ref{asmic}. 
Thus, if the random walk crosses the interval $[0,B)$ infinitely many times then it must in fact visit that interval infinitely often. That is, 
\[
 \P_\eta\left( \liminf_{n\ra\infty} X_n = -\infty, \, \limsup_{n\ra\infty} X_n = \infty \right) \leq \P_\eta\left( \sum_{n\geq 0} \ind{X_n \in [0,B) } = \infty \right). 
\]
For any site $y \in [0,B)$ and $x \in \Z$ there is a path of finite length starting at $y$ and ending at $x$ that occurs with positive probability under the random walk measure induced by $\mu$. If the site $y$ is visited infinitely often, then this finite path must be followed upon infinitely many times. 
Thus, if the interval $[0,B)$ is visited infinitely many times then every site is visited infinitely many times. Therefore, 
\[
  \P_\eta\left( \liminf_{n\ra\infty} X_n = -\infty, \, \limsup_{n\ra\infty} X_n = \infty \right) \leq \P_\eta\left( \sum_{n\geq 0} \ind{X_n = x } = \infty , \, \forall x \in \Z \right). 
\]
The reverse inequality is obvious. 
\end{proof}

We are now ready to give the proof of the first main result of the paper.

\begin{proof}[Proof of Theorem \ref{thm:ergrt}]
If any site $x \in \Z$ is visited infinitely many times then it follows from Assumption \ref{asmMmu} that  $\liminf_{n\ra\infty} X_n = -\infty$ and $\limsup_{n\ra\infty} X_n = \infty$. 
Thus, it follows from Proposition \ref{limsupprop} and Lemma \ref{infsupR} that $\P_\eta(\mathcal{R} \cup \mathcal{T}_+) = 1$. This proves the first part of Theorem \ref{thm:ergrt}.

To prove the second part of Theorem \ref{thm:ergrt}, it is enough to show that $\d<1$ implies
\begin{equation}\label{sureleft}
 P_\w( \s_{-m} = \infty) = 0, \quad \forall m\geq 1, \quad \text{for $\eta$-a.e.\ cookie environment $\w$.}
\end{equation}
The proof of \eqref{sureleft} is similar to the proof of \eqref{goright} above and so we will only provide an outline here highlighting the differences from the proof of \eqref{goright}. 
To this end, fix an integer $a > \frac{\d}{1-\d}$ and for $m\geq 1$ fixed define the random variables $\{r_k\}_{k\geq 0}$ and $\{R_k\}_{k\geq 0}$ by 
\[
r_0=R_0 = 0, \quad\text{and}\quad
\begin{cases}
 r_k = \tau_{(a+1)R_{k-1} + am} \wedge \s_{-m} \\
R_k = X_{r_k}
\end{cases}
\quad \text{ for } k\geq 1. 
\]
These random variables play a similar role as $s_k$ and $S_k$ in the proof of \eqref{goright} above, but now if $R_k>-m$ then $R_k$ is no longer the midpoint of the interval $(-m,(a+1)R_k+am)$ but instead the distance to the right endpoint is $a$ times the distance to the left endpoint. 
As above, the key to the proof of \eqref{sureleft} will be showing that 
\begin{equation}\label{limexitub}
 \lim_{x\ra\infty} \sup_{\w' \leq \w} P_{\w'}^{x}\left( \tau_{(a+1)x + am} < \s_{-m} \right) \leq \frac{1}{1+a} + \d, \quad \eta\text{-a.s.}
\end{equation}
Note that our choice of $a$ implies that $1/(1+a) + \d < 1$. 
Therefore, \eqref{limexitub} implies that any $\rho \in ( \frac{1}{1+a} + \d, 1)$ and $\eta$-a.e.\ cookie environment $\w$
\begin{align*}
& P_\w\left( r_k < \s_{-m} \right) = P_\w\left( r_{k-1} < r_k < \s_{-m} \right) \nonumber \\
&= \sum_{n=1}^\infty \sum_{x\geq m(1+a)^{k-1} - m} \sum_{\w' \leq \w} P_\w\left( r_{k-1} = n, \, R_{k-1} = x, \,  \bar\w(n) = \w' \right) P_{\w'}^{x}\left( \tau_{(a+1)x + am} < \s_{-m} \right),  \label{Prksm} \\
&\leq \rho P_\w( r_{k-1} < \s_{-m} ),
\end{align*}
for all $k$ sufficiently large (depending on $\w$). 
From this it follows that  $$P_\w( \s_{-m} = \infty) = \lim_{k\ra\infty} P_\w( r_k < \s_{-m} ) = 0, \qquad \eta\text{-a.s.}$$

To complete the proof of \eqref{sureleft} we need to prove \eqref{limexitub}. 
As a first step in this direction, note that \eqref{EXED} implies that 
\begin{equation}\label{rcexitub}
 E_{\w'}^x\left[ X_{\tau_{(a+1)x+am} \wedge \s_{-m}} \right] \leq x + \sum_{i=-m+1}^{(a+1)x+am-1} \bar{d}(\w'_i) \leq x + \sum_{i=-m+1}^{(a+1)x+am-1} \bar{d}(\w_i), 
\end{equation}
and a lower bound for the same expectation is 
\begin{align}
& E_{\w'}^x\left[ X_{\tau_{(a+1)x+am} \wedge \s_{-m}} \right] \nonumber \\
&\geq (a+1)(x+m) P_{\w'}^x\left( \tau_{(a+1)x+am} < \s_{-m} \right) - m - E_{\w'}^x\left[ (X_{\tau_{(a+1)x+am} \wedge \s_{-m}}+m)_-  \right] \label{rcexitlb}
\end{align}
Combining \eqref{rcexitub} and \eqref{rcexitlb} we get 
\begin{align}
 P_{\w'}^x\left( \tau_{(a+1)x+am} < \s_{-m} \right) & \leq \frac{1}{a+1} + \frac{1}{(a+1)(x+m)} \sum_{i=-m+1}^{(a+1)x+am-1} \bar{d}(\w_i) \nonumber  \\
&\qquad +  \frac{1}{(a+1)(x+m)} E_{\w'}^x\left[ (X_{\tau_{(a+1)x+am} \wedge \s_{-m}}+m)_-  \right]. \label{roughub}
\end{align}
Since the ergodic theorem implies that the second term on the right converges to $E_\eta[ \bar{d}(\w_0) ] = \d$, $\eta$-a.s., to complete the proof of \eqref{limexitub} we need only to show that 
\begin{equation}\label{undershoot}
 \lim_{x\ra\infty} \sup_{\w'\leq \w} \frac{1}{(a+1)(x+m)} E_{\w'}^x\left[ (X_{\tau_{(a+1)x+am} \wedge \s_{-m}}+m)_-  \right] = 0. 
\end{equation}
To this end, an argument similar to \eqref{rightosub} implies that 
\begin{align*}
E_{\w'}^x\left[ (X_{\tau_{(a+1)x+am} \wedge \s_{-m}}+m)_-  \right] \leq \sum_{y=-m+1}^{(a+1)x+am-1} \sum_{j=1}^M E_{\w}\left[ (W_{y,j} +y + m)_- \right]  + B, \qquad \forall \w' \leq \w,
\end{align*}
and from this \eqref{undershoot} follows from an argument similar to the proof of \eqref{EWpsum}.
\end{proof}

\section{The cookie environment process}\label{sec:cep}

Since the transitions of the excited random walk depend on the history of the walk, the excited random walk is not Markovian. Of course, by expanding the state space to include some information about the history of the walk one can obtain a Markov process. In this section we will introduce a Markov chain which we will call the \emph{cookie environment process} that is reminiscent of the environment viewed from the particle that is used in random walks in random environments. This cookie environment process is similar to a process used by Zerner in \cite{zERWZdstrip} in studying excited random walks on $\Z^d$ and on strips, and the main outline of this section follows that \cite{zERWZdstrip} though the possibility of unbounded jumps introduces significant difficulties.

To define the cookie environment process, let $\theta:\Omega \ra \Omega$ be the natural (spatial) left shift operator on cookie environments. That is, $\th \w = \w' = \{\w'_{x,j}\}_{x\in\Z, \, j\geq 1}$ is the cookie environment with $\w'_{x,j} = \w_{x+1,j}$ for all $x \in \Z$, $j\geq 1$. 
We define the cookie environment process $\{\zeta_n\}_{n\geq 0}$ to be the stochastic process on $\Omega \times \Z_+$ defined by 
\begin{equation}\label{Cndef}
 \zeta_n = \left( \theta^n \bar{\w}(\tau_n), \, X_{\tau_n} - n \right), \quad \forall n\geq 0. 
\end{equation}
The first coordinate $\theta^n \bar{\w}(\tau_n)$ records the cookie environment when the walk first enters $[n,\infty)$ and shifts the cookie environment $n$ units to the left. The second coordinate $X_{\tau_n} - n$ records the distance which the excited random walk exceeds $n$ when first entering $[n,\infty)$. 
Note that the cookie environment process $\zeta_n$ is only well defined for all $n$ when $\limsup_{n\ra\infty} X_n = +\infty$. 
Therefore, if we define $\Psi \subset \Omega \times \Z_+$ by 
\[
 \Psi = \left\{ (\w,x) \in\Omega \times \Z_+: \, P_\w^x\left( \limsup_{n\ra\infty} X_n = \infty \right) = 1 \right\}, 
\]
it is easy to see that $\{\zeta_n\}_{n\geq 0}$ is a Markov chain on $\Psi$.
In the following lemma, we will show that $\zeta_n$ is in fact a weak Feller continuous Markov chain on $\Psi$. However, before stating this a brief discussion is needed on the topology of the state space $\Psi$. 
The space $M_1(\Z)$ of probability distributions on $\Z$ is equipped with the topology of weak-* convergence (convergence in distribution), and then the space $\Omega = M_1(\Z)^{\Z \times \Z_+}$ of all cookie environments is given the corresponding product topology. Note that it is well known that this topology on $\Omega$ is compatible with a metric under which $\Omega$ is a Polish space. 
Finally, the space $\Psi$ is given the induced subspace topology from $\Omega \times \Z_+$, where $\Z_+$ is of course given the discrete topology. 

\begin{lem}\label{lem:weakFeller}
 The cookie environment process $\{\zeta_n\}_{n\geq 0}$ is a weak Feller continuous Markov chain on $\Psi$. That is, if $f: \Psi \ra \R$ is a bounded continuous function 
then the mapping $(\w,x) \mapsto E_\w^x[ f(\zeta_1) ]$ is a bounded continuous function from $\Psi \ra \R$. 
\end{lem}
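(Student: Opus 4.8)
The plan is to prove continuity of $(\w,x) \mapsto E_\w^x[f(\zeta_1)]$ by expressing $\zeta_1$ explicitly in terms of the underlying jump variables and then passing to the limit using dominated convergence, with the uniform integrability of the overshoot $X_{\tau_1} - 1$ playing the central role. First I would fix the expanded quenched measure $P_\w^x$ carrying the independent family $\{W_{y,j}\}_{y\in\Z,\,j\geq 1}$ with $W_{y,j}\sim\w_{y,j}$, so that the walk run up to time $\tau_1$ is a deterministic function of finitely many coordinates of this family together with the starting point $x$. Concretely, on the event $\{\tau_1 = N\}$ the quantities $\bar\w(\tau_1)$ (hence $\th\bar\w(\tau_1)$) and $X_{\tau_1}$ depend only on the values $W_{y,j}$ for the sites $y$ visited before time $N$ and the corresponding visit counts $j \le L_{N-1}(y)$. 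Since $f$ is bounded and continuous on $\Psi$ and $\Z_+$ is discrete, it suffices to show: (a) for each fixed finite "trajectory skeleton" the map $\w \mapsto$ (probability of that skeleton) $\times$ (value of $f$ along it) is continuous, and (b) the contribution of skeletons with $\tau_1$ large, or with large overshoot, is uniformly small.

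For step (a), note that for a fixed finite sequence of jumps the probability under $P_\w^x$ is a finite product of terms $\w_{y,j}(z)$, each of which is continuous in $\w$ in the product topology on $\Omega$ (weak-* convergence on $M_1(\Z)$ restricted to a single atom $z$ is just coordinate convergence). The value $f(\zeta_1)$ along such a skeleton is a fixed real number, continuous because the shifted-and-truncated environment $\th^n\bar\w(\tau_n)$ converges coordinatewise when $\w$ does — removing a fixed (skeleton-dependent) number of cookies from each site is a continuous operation. One subtlety: convergence $\w^{(k)}\to\w$ in $\Psi$ must be accompanied by $\w^{(k)},\w\in\Psi$, i.e.\ $\limsup X_n = \infty$ $P$-a.s., which is exactly what makes $\tau_1<\infty$ a.s.\ so the skeleton decomposition is exhaustive; this is built into the definition of $\Psi$.

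The main obstacle, and step (b), is the tail control: because jumps can be unbounded, I must rule out an escape of mass to infinity in either $\tau_1$ or the overshoot $X_{\tau_1}-1$. For the overshoot I would invoke the martingale estimate already established: the bound leading to \eqref{rightosub} (applied with $m=1$, or its analogue) gives $E_\w^x[(X_{\tau_1}-1)_+] \le \sum_{y} \sum_{j=1}^M E_\w[(W_{y,j}+y-1)_+] + B$, and this bound, being a sum over a fixed finite window near $1$ determined only by how far left of the origin the walk can wander before $\tau_1$, is locally bounded and — crucially — has a modulus of integrability that is uniform over a neighborhood of $\w$ because each summand $E_\w[(W_{y,j}+y-1)_+] = \sum_z (z+y-1)_+ \w_{y,j}(z)$ is lower semicontinuous and finite by Assumption~\ref{asmic}, so truncating $W_{y,j}$ at a large level changes the expectation by an amount that is small uniformly near $\w$. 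Thus $\{X_{\tau_1}^{(k)} - 1\}_k$ is uniformly integrable, and since $f$ is merely bounded, continuity of $\w\mapsto E_\w^x[f(\zeta_1)]$ follows by splitting the expectation into the event $\{\tau_1 \le N,\ X_{\tau_1}-1 \le N\}$ — handled by (a) as a finite sum — and its complement, whose $f$-contribution is $O(\|f\|_\infty)$ times a probability that is small uniformly near $\w$ by the overshoot estimate together with the exponential tail of $\tau_1$ noted after \eqref{etfinite} (this last tail bound also being uniform over a neighborhood since it only uses that there are $M$ cookies per site and $\mu$ has support in $[-B,B]$). Continuity in $x$ is immediate since $\Z_+$ is discrete. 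Finally, the weak Feller property for the whole chain follows because, by the Markov property, $E_{\zeta_0}[f(\zeta_{n+1})] = E_{\zeta_0}[(Pf)(\zeta_n)]$ where $Pf(\w,x) = E_\w^x[f(\zeta_1)]$ is bounded continuous by the above, so one iterates.
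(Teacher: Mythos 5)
Your step (a) matches the paper, and your general strategy of decomposing $E_\w^x[f(\zeta_1)]$ over finite path skeletons and controlling a remainder is in the right spirit. But step (b) — the tail control — has a genuine gap, and the specific estimates you invoke do not in fact hold in the form you need.

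First, your overshoot bound $E_\w^x[(X_{\tau_1}-1)_+] \le \sum_y \sum_{j=1}^M E_\w[(W_{y,j}+y-1)_+] + B$ is \emph{not} a sum over a ``fixed finite window near $1$.'' The stopping time $\tau_1$ has no left boundary, so the walk can wander arbitrarily far to the left before reaching $[1,\infty)$, and the $y$-sum runs over all of $(-\infty,0]$; the finiteness of the analogous sum in \eqref{rightosub} depends crucially on the presence of the second stopping boundary $\s_{2x-m}$. Second, even if the sum were finite, your claimed uniformity near $\w$ fails for a topological reason: the topology on $\Omega$ is the product of weak-* topologies, and $\mu\mapsto \sum_z (z)_+\mu(z)$ is lower semicontinuous but \emph{not} continuous, and certainly not upper semicontinuous, so weak convergence $\w^{(n)}_{y,j}\to\w_{y,j}$ does not make the truncated tails $\sum_z (z-N)_+\w^{(n)}_{y,j}(z)$ small uniformly in $n$. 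You cannot deduce uniform integrability of the overshoot over a neighborhood from Assumption~\ref{asmic}. Third, the ``exponential tail of $\tau_1$'' you appeal to after \eqref{etfinite} is for exit times of \emph{bounded} intervals; $\tau_1$ alone has no such tail bound in general (indeed Remark~\ref{rem:lefttransient} exhibits a $\w\in\Omega^+_{M,\mu}$ with $P_\w(\tau_1=\infty)>0$, and even for $\w\in\Psi$ there is no a priori rate), and such a bound is certainly not uniform over weak-* neighborhoods.

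The paper's proof sidesteps all of this by a simpler observation that you have within reach but do not use. Since $(\w,0)\in\Psi$, one has $\tau_1<\infty$ and $\sup_{k\le\tau_1}|X_k|<\infty$ $P_\w$-a.s., so for any $\e>0$ there exist \emph{finite} $t,L$ depending only on the fixed limit environment $\w$ with $P_\w(\tau_1\le t,\ \sup_{k\le\tau_1}|X_k|\le L)>1-\e$. This event is a finite disjoint union of path events over $\Pi_{t,L}$, and each path probability is a continuous function of finitely many coordinates $\w_{y,j}(z)$. Continuity of this \emph{finite} sum then automatically gives $P_{\w^{(n)}}(\tau_1\le t,\ \sup_{k\le\tau_1}|X_k|\le L)\ge 1-\e$ for all large $n$ — no uniform moment estimate, no exponential tail, no uniform integrability of the overshoot. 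The complement contributes at most $2\e\|f\|_\infty$, and the good part converges term by term as in your step (a). The point is that you extract the required tightness from continuity of the finitely-supported path probabilities rather than trying to propagate quantitative tail bounds from $\w$ to nearby environments, which the weak-* topology does not permit.
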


\begin{rem}
 The proof below is a minor adaptation of the proof of Lemma 5 in \cite{zERWZdstrip}. 
\end{rem}

\begin{proof}
Let $(\w,x) \in \Psi$ be fixed and let $(\w^{(n)},x_n)$ be a sequence in $\Psi$ with $(\w^{(n)},x_n) \ra (\w,x)$. 
Since $x_n \ra x$ in the discrete topology on $\Z_+$, without loss of generality we may assume that $x_n = x$ for all $n$. 
If $x\geq 1$ then $\tau_1 = 0$ and so 
\[
 E_{\w^{(n)}}^x[ f(\zeta_1) ] = f(\th \w^{(n)},x-1) \xrightarrow[n\ra\infty]{} f(\th \w, x-1) = E_\w^x[ f(\zeta_1) ], \quad \forall x\geq 1. 
\]
Thus, to complete the proof we need only to handle the case when $x=0$; that is, we need to show that 
\[
 \lim_{n\ra\infty} E_{\w^{(n)}}[ f(\zeta_1) ] = E_\w[ f(\zeta_1) ]. 
\]
Since $(\w,0) \in \Psi$, we have that $\tau_1$ is finite $P_\w$-a.s., and so for any $\e>0$ there exists a $t = t(\w,\e)$ and $L=L(\w,\e)$ such that 
$P_\w(\tau_1 \leq t, \, \sup_{k\leq \tau_1} |X_k| \leq L) >  1-\e$. 
Let $\Pi_{t,L}$ denote the set of paths $\mathbf{x} = (x_0,x_1,\ldots,x_m)$ of some length $m\leq t$ such that 
$x_0 = 0$, $-L \leq x_i \leq 0$ for all $i<m$, and $1 \leq x_m \leq L$.
With this notation we have that 
\begin{equation}\label{Ptau1tL}
 P_\w\left( \tau_1 \leq t, \, \sup_{k\leq \tau_1} |X_k| \leq L \right) = \sum_{\mathbf{x} \in \Pi_{t,L}} P_\w\left( X_\cdot \text{ follows the path } \mathbf{x} \right).  
\end{equation}
For each fixed path $\mathbf{x} \in \Pi_{t,L}$, the probability inside the sum on the right depends on the cookie environment $\w$ only through $\{\w_{x,j}(z)\}_{|x|\leq L, \, |z|\leq 2L, \, j\leq M}$ and the fixed probability measure $\mu$. 
Since we have given the space of cookie environments $\Omega$ the product topology, it follows that
\begin{equation}\label{PwPwn}
 \lim_{\w^{(n)} \ra \w} P_{\w^{(n)}}\left( X_\cdot \text{ follows the path } \mathbf{x} \right) = P_\w\left( X_\cdot \text{ follows the path } \mathbf{x} \right), \quad \forall \mathbf{x} \in \Pi_{t,L}. 
\end{equation}
Since the sum in \eqref{Ptau1tL} is over the finite set $\Pi_{t,L}$, we can thus conclude by the choice of $t$ and $L$ that $P_{\w^{(n)}}(\tau_1 \leq t, \, \sup_{k\leq \tau_1} |X_k| \leq L) \geq  1-\e$ for all sufficiently large $n$. 
For any $z =1,2,\ldots,L$ let $\Pi_{t,L}^z \subset \Pi_{t,L}$ be the subset of paths that ends at $z$. 
Also, for any $\mathbf{x} = (x_0,x_1,\ldots,x_m) \in \Pi_{t,L}$ let $\ell_{\mathbf{x}}: \Z \ra \Z_+$ be given by $\ell_{\mathbf{x}}(y) = \sum_{k=0}^{m-1} \ind{x_k = y}$. That is, $\ell_{\mathbf{x}}$ is the local time of the random walk following the path $\mathbf{x}$.  
With this notation, if $\mathbf{x} \in \Pi_{t,L}^z$ and the walk $X_\cdot$ follows the path $\mathbf{x}$ to begin then $\zeta_1 = (\theta \, \a(\w,\ell_{\mathbf{x}}), z-1)$. 
Therefore, 
\begin{align*}
& \left| E_{\w^{(n)}}[ f(\zeta_1) ] - E_\w[ f(\zeta_1) ] \right| \\
&\leq \left| E_{\w^{(n)}}[ f(\zeta_1) \ind{\tau_1 \leq t, \, \sup_{k\leq \tau_1} |X_k| \leq L} ] - E_\w[ f(\zeta_1) \ind{\tau_1 \leq t, \, \sup_{k\leq \tau_1} |X_k| \leq L} ] \right| + 2 \e \|f\|_\infty \\
&\leq \sum_{z=1}^L \sum_{\mathbf{x} \in \Pi_{t,L}^z} \biggl| P_{\w^{(n)}}( X_\cdot \text{ follows the path } \mathbf{x} ) f(\th \, \a(\w^{(n)},\ell_{\mathbf{x}}),z-1)   \\
&\qquad \qquad \qquad - P_\w( X_\cdot \text{ follows the path } \mathbf{x} ) f( \th \, \a(\w,\ell_{\mathbf{x}}),z-1) \biggr| + 2 \e \|f\|_\infty. 
\end{align*}
The terms in the summation vanish as $n\ra\infty$ due to \eqref{PwPwn} and the fact that 
$\w^{(n)}\ra \w$ implies that $\th \, \a(\w^{(n)},\ell_{\mathbf{x}}) \ra \th \, \a(\w,\ell_{\mathbf{x}})$. 
Therefore, as $n\ra\infty$ the above is at most $2\e \|f\|_\infty$. Since $\e>0$ was arbitrary this completes the proof of the lemma. 
\end{proof}

Our main goal in this section is to establish the existence of a stationary measure $\pi$ for the cookie environment process. Since we have shown that the $\zeta_n$ is weak Feller continuous on $\Psi$, a natural approach is to obtain $\pi$ as a (subsequential) limit of the sequence of 
measures $\mu_n$ on $\Psi$ defined by 
\begin{equation}\label{mundef}
 \mu_n(\cdot) = \frac{1}{n} \sum_{k=1}^n \P_\eta( \zeta_k \in \cdot ). 
\end{equation}
To this end, we will need the following tightness result.

\begin{lem}\label{lem:tightness}
If Assumptions \ref{asmMmu}--\ref{asmic} hold, then the sequence $\mu_n$ defined in \eqref{mundef} is a tight sequence of measures on $\Psi$. 
\end{lem}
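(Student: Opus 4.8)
The plan is to prove tightness of $\{\mu_n\}$ by controlling the two coordinates of $\zeta_n = (\theta^n \bar\w(\tau_n), X_{\tau_n}-n)$ separately. For the first (cookie environment) coordinate, the key observation is that the space $M_1(\Z)$ with the weak-* topology is \emph{not} compact, but since $\w \in \Omega_{M,\mu}^+$ only the first $M$ cookies at each site can differ from the fixed distribution $\mu$, and moreover $\bar\w(\tau_n)$ is always of the form $\a(\w, \ell)$ for some local time profile $\ell$, meaning each cookie in $\theta^n\bar\w(\tau_n)$ is either one of the original cookies of $\w$ (shifted) or equals $\mu$. So tightness of the first coordinate reduces to tightness of the laws of finitely many cookies $\{\w_{y,j}\}$ under the averaged measure, which (by Assumption \ref{asmic}, i.e., $E_\eta[\sum_{j\le M}\sum_z |z|\,\w_{0,j}(z)]<\infty$, together with stationarity) gives uniform integrability and hence tightness in $M_1(\Z)$ for each coordinate cookie; a standard diagonal/product argument then yields tightness of the first coordinate of $\mu_n$ in $\Omega$.

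The real work is the second coordinate: I must show the family of laws of the overshoot $X_{\tau_n}-n$ under $\P_\eta$, averaged over $n$, is tight in $\Z_+$, i.e.,
\[
 \sup_n \frac{1}{n}\sum_{k=1}^n \P_\eta\bigl( X_{\tau_k}-k > R \bigr) \limN 0 \quad \text{as } R\to\infty.
\]
First I would fix $k$ and decompose the overshoot at level $k$ according to the site $y = X_{\tau_k - 1} < k$ from which the walk jumps over $k$ and the visit number $j = L_{\tau_k-1}(y)$ to that site. If the overshoot exceeds $R$, then the jump $W_{y,j}$ made from $y$ exceeds $R + (k - y) \ge R$. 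Since jumps governed by $\mu$ are bounded by $B$, only the first $M$ cookies at $y$ can produce such a large jump when $R \ge B$, so
\[
 \P_\eta(X_{\tau_k}-k > R) \le \sum_{y < k}\ \sum_{j=1}^M \P_\eta\bigl( X_{\tau_k-1}=y,\ L_{\tau_k-1}(y)=j,\ W_{y,j} > R + k - y\bigr).
\]
The next step is to bound the probability that $y$ is ever visited before $\tau_k$ — call this $q_k(y)$ — and sum: using independence of $W_{y,j}$ from the event that $y$ is visited $j$ times before $\tau_k$ (via the expanded probability space with the $W$-variables, as in the derivation of \eqref{rightosub}), the right side is at most $\sum_{y<k}\sum_{j=1}^M \E_\eta[\,\mathbf 1\{y \text{ visited}\}\, \P_\eta(W_{y,j} > R+k-y \mid \w)\,]$. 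Crudely bounding the visit indicator by $1$ is not enough (the sum over $y$ would diverge), so I would instead observe that $\sum_{y<k}\mathbf 1\{y \text{ visited before }\tau_k\} \le \tau_k$ is unhelpful too; the cleaner route is to substitute $y = k - i$ and note $W_{y,j} > R + i$, then sum over $i \ge 1$ and over $y$. By spatial stationarity, $\sum_{y<k}\P_\eta(W_{y,j} > R+k-y) = \sum_{i\ge1}\P_\eta(W_{0,j} > R+i) \le \E_\eta[(W_{0,j}-R)_+]$, which $\to 0$ as $R\to\infty$ by Assumption \ref{asmic}. This bound is uniform in $k$, so averaging over $k\le n$ is immediate and the supremum over $n$ poses no difficulty.

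I expect the main obstacle to be the measurability/independence bookkeeping in that last estimate: one needs the event $\{X_{\tau_k-1}=y,\ L_{\tau_k-1}(y)=j\}$ to be independent of $W_{y,j}$ (it depends only on $W_{z,i}$ for $(z,i)\neq(y,j)$ with $i \le j$ when $z=y$), exactly the structure exploited in the proof of Proposition \ref{limsupprop}. Once that is in place, dropping the constraint entirely and paying the price $\sum_{i\ge1}\P_\eta(W_{0,j}>R+i)\le \E_\eta[(W_{0,j}-R)_+]$ is the correct trade-off. A minor additional point: I should also verify that the limiting (subsequential) measure is supported on $\Psi$ rather than merely on $\Omega\times\Z_+$ — but this follows because Proposition \ref{limsupprop} guarantees $(\w,x)\in\Psi$ for $\eta$-a.e.\ $\w$ and the set $\Psi$ is closed enough for the purposes of the Krylov–Bogolyubov argument to be completed in the next steps (this is handled after the present lemma). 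For the present lemma, only tightness on $\Psi$ with its subspace topology is asserted, and since $\mu_n$ is supported on $\Psi$ by construction, the tightness established above suffices.
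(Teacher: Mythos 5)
Your Step 3 (controlling the overshoot $X_{\tau_k}-k$) is exactly the paper's estimate and is correct, including the substitution $y=k-i$ and the bound $\sum_{i\geq 1}\P_\eta(W_{0,j}>R+i)\leq\E_\eta[(W_{0,j}-R)_+]$, which is uniform in $k$. Your treatment of the first coordinate, while slightly muddled (you invoke Assumption \ref{asmic} to get tightness of the cookie laws, but in fact every Borel probability on the Polish space $M_1(\Z)$ is tight automatically; the paper just picks $\mathcal{C}_\delta$ using this fact), can be straightened out. The important feature, which you do mention, is that the compact set in $\Omega$ must be closed downward under the order $\w'\leq\w$, since $\theta^k\bar\w(\tau_k)\leq\theta^k\w$ and only the law of the latter is controlled by $\eta$.

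The genuine gap is what you dismiss at the end as a ``minor additional point.'' The lemma claims tightness as measures on $\Psi$, which means for every $\e>0$ there must be a \emph{compact subset $K\subset\Psi$} with $\mu_n(K)\geq 1-\e$ uniformly in $n$. What you have produced is a compact $\tilde K=\mathcal{K}_\e\times[0,L_\e]\subset\Omega\times\Z_+$ with $\mu_n(\tilde K)\geq 1-\e$. Since $\Psi$ is not a closed subset of $\Omega\times\Z_+$, the set $\tilde K\cap\Psi$ need not be compact in the subspace topology of $\Psi$ (compare $\delta_{1/n}$ supported on $(0,1]$: tight in $[0,1]$ but not in $(0,1]$). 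The fact that each $\mu_n$ is supported on $\Psi$ does not rescue this, and the issue cannot be deferred to the Krylov--Bogolyubov step: without tightness on $\Psi$, the subsequential limit $\pi$ might put mass outside $\Psi$, i.e., on environments $(\w,x)$ with $P_\w^x(\limsup X_n=\infty)<1$, and then $\zeta_n$ is not even well defined under $\pi$.

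The paper closes this gap with its Step 2: construct closed subsets $F_\e\subset\Omega$ satisfying three properties simultaneously: $\eta(\w\in F_\e)\geq 1-\e$, monotonicity under $\leq$, and the crucial inclusion $F_\e\times\Z_+\subset\Psi$. The construction relies on the estimate \eqref{Pwxuub} from the proof of Proposition \ref{limsupprop}: the sets
\[
A_{k,m}=\left\{\w: P_{\w'}^x(\s_{2x-m}<\tau_m)\leq \tfrac34,\ \forall\,\w'\leq\w,\ \forall x\leq -k\right\}
\]
are closed in $\Omega$ (because each event $\{\s_u<\tau_z\}$ at a fixed starting point depends continuously on finitely many coordinates of $\w$), monotone, and have $\eta$-probability tending to $1$ as $k\to\infty$. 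Every $\w\in A_{k,m}$ satisfies $P_\w(\tau_m<\infty)=1$, so intersecting over $m$ (with $k=k(\e,m)$ large) yields $H_\e$ with $H_\e\times\{0\}\subset\Psi$, and then shifting gives $F_\e=\bigcap_{x\geq 0}\theta^{-x}H_{\e2^{-x-1}}$ with $F_\e\times\Z_+\subset\Psi$. Only after this can one take the compact set $(\mathcal{K}_\e\cap F_\e)\times[0,L_\e]$, which \emph{is} compact in $\Psi$ because the closed, monotone set $F_\e$ forces membership in $\Psi$. Without this step your argument only proves tightness on $\Omega\times\Z_+$, which is a strictly weaker statement.
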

\begin{rem}
 If we restrict ourselves to cookie environments with uniformly bounded jumps, then the space of all cookie environments is compact and the tightness of the sequence $\{\mu_n\}$ is immediate. 
\end{rem}

\begin{proof}
We will break up the proof into three preliminary steps: 1) Finding ``nice'' compact subsets of $\Omega$, 2) Finding ``nice'' closed subsets of $\Psi$, and 3) Controlling the second coordinate of $\zeta_k$,  $X_{\tau_k} - k$. 

\noindent\textbf{Step 1: Nice compact subsets of $\Omega = M_1(\Z)^{\Z \times \Z_+}$.}\\
We claim that for any $\e>0$ there exists a compact $\mathcal{K}_\e \subset \Omega$ such that 
\begin{equation}\label{Keprop}
 \eta( \w \in \mathcal{K}_\e ) \geq 1- 2\e
\qquad\text{and}\qquad
 \w \in \mathcal{K}_\e \Longrightarrow \w' \in \mathcal{K}_\e, \, \forall \w'\leq \w. 
\end{equation}
To prove this claim, first note that for any $\d>0$ there exists a compact subset $\mathcal{C}_\delta \subset M_1(\Z)$ such that $\eta( \w_0 \in \mathcal{C}_\delta^\N ) \geq 1-\d$. 
This is possible since we have that $\w_{0,j} = \mu$ for all $j > M$ and thus we need only to choose a compact $\mathcal{C}_\delta$ such that $\mu \in \mathcal{C}_\delta$ and $\eta( \w_{0,j} \in \mathcal{C}_\d ) \geq 1-\d/M$ for $j=1,2,\ldots,M$ (this can be accomplished since $M_1(\Z)$ is a Polish space and every probability distribution on a Polish space is tight.)
Now let 
\[
 \mathcal{K}_\e = \bigotimes_{x\in \Z} \mathcal{C}_{\e 3^{-|x|}}^\N = \left\{\w \in \Omega: \, \w_{x,j} \in \mathcal{C}_{\e 3^{-|x|}}, \, \forall x\in \Z, \, j\geq 1 \right\}. 
\]
Since $\mathcal{K}_\e$ is the product of compact sets, it follows from Tychonoff's Theorem that $\mathcal{K}_\e$ is a compact subset of $\Omega_{M,\mu}^+$. 
It is clear from the construction of $\mathcal{K}_\e$ that $\w \in \mathcal{K}_\e$ implies that $\w' \in \mathcal{K}_\e$ for all $\w' \leq \w$, and also $ \eta( \w \notin \mathcal{K}_\e) \leq \sum_{x \in \Z} \e 3^{-|x|} = 2 \e$. 
This completes the proof of the claim in \eqref{Keprop}. 

\noindent\textbf{Step 2: Nice closed subset of $\Psi$.}\\
A drawback to the compact subset $\mathcal{K}_\e$ constructed in step 1 above is that we cannot conclude that 
$\mathcal{K}_\e \times \Z_+ \subset \Psi$. 
Moreover, 
since $\Psi$ is not a closed subset of $\Omega \times \Z_+$ 
we cannot conclude that $(\mathcal{K}_\e \times [0,L]) \cap \Psi$ is a relatively compact subset of $\Psi$. 
Therefore, we will show in this step that for any $\e>0$ there exists a closed subset $F_\e \subset \Omega$ with the following three properties. 
\begin{equation}\label{Feprop}
 \eta( \w \in F_\e) \geq 1-\e, 
\qquad
\w \in F_\e \Longrightarrow \w' \in F_\e, \, \forall \w'\leq \w,
\quad\text{and}\quad
F_\e \times \Z_+ \subset \Psi. 
\end{equation}
First of all, it is easy to see that for any $u < x < z$ that  
\begin{equation}\label{Pexitclosed}
 \left\{\w \in \Omega: \, P_\w^x( \s_u < \tau_z ) \leq \frac{3}{4} \right\}
\end{equation}
is a relatively closed subset of $\Omega$. 
Indeed, if $\w \in \Omega$ is such that $P_\w^x( \s_u < \tau_z ) > \frac{3}{4}$ then there exists a $t<\infty$ and $L<\infty$ (depending on $\w,u,x$ and $z$) such that 
\begin{equation}\label{PexittL}
 P_\w^x\left( \s_u < (\tau_z \wedge t), \, |X_{\s_u}|\leq L \right) > \frac{3}{4}. 
\end{equation}
Since the event inside the probability on the left concerns only finitely many possible paths, the probability on the left is easily seen to be a continuous function of finitely many of the coordinates $\w_{y,j}(z)$ of the cookie environment $\w$. Thus, there is a neighborhood of $\w \in \Omega$ for which the inequality \eqref{PexittL} holds for all cookie environments in this neighborhood, and this is enough to show that the set in \eqref{Pexitclosed} is closed. 
Since \eqref{Pexitclosed} is closed it follows for $k,m\geq 1$ that 
\begin{align*}
A_{k,m} &:= \left\{ \w \in \Omega: P_{\w'}^x \left( \s_{2x-m} < \tau_m \right) \leq \frac{3}{4}, \forall \w'\leq \w, \, \forall x \leq -k \right\} \\
&= \bigcap_{x\leq -k} \bigcap_{\ell \in \Z_+^\Z} \left\{ \w \in \Omega: \, P_{\a(\w,\ell)}^x (\s_{2x-m} < \tau_m ) \leq \frac{3}{4} \right\}
\end{align*}
is a closed subset of $\Omega$. 
(Note we are also using that the mapping $\w \mapsto \a(\w,\ell)$ is continuous for a fixed function $\ell: \Z \ra \Z_+$.)
It follows from \eqref{Pwxuub} that $\lim_{k \ra \infty} \eta( \w \in A_{k,m} ) =1$, and the paragraph following \eqref{Pwxuub} shows that $P_{\w}(\tau_m < \infty) = 1$ for all $\w \in A_{k,m}$. 
Also, it follows clearly from the definition of $A_{k,m}$ above that $\w \in A_{k,m}$ implies that $\w' \in A_{k,m}$ for all $\w'\leq \w$. 
Therefore, if $k(\e,m)<\infty$ is large enough so that $\eta( A_{k(\e,m),m} ) \geq 1 - \frac{\e}{2^m}$  then 
$
 H_\e = \bigcap_{m=1}^\infty A_{k(\e,m),m}, 
$
is a closed subset of $\Omega$ such that 
\[
 \eta(\w \in H_\e) \geq 1-\e
\qquad
\w \in H_\e \Longrightarrow \w' \in H_\e, \, \forall \w'\leq \w,
\quad\text{and}\quad H_\e \times \{0\} \subset \Psi. 
\]
Finally, if $\theta^{-x} H_\e = \{\w: \th^x \w \in H_\e \}$ then $F_\e = \bigcap_{x=0}^\infty \left( \theta^{-x}H_{\e 2^{-x-1}} \right)$ is a closed subset of $\Omega$ with all the required properties. 

\noindent\textbf{Step 3: Control on $X_{\tau_k} - k$.} \\
To obtain control on the second coordinate of the cookie environment process $\zeta_n$, we note that for any integer $L>B$ and $k\geq 1$ that 
\begin{align*}
 \P_\eta( X_{\tau_k} - k \geq L )
&\leq \sum_{y<k} \sum_{j=1}^M \P_\eta( W_{y,j} + y - k \geq L )\\
&= \sum_{j=1}^M \sum_{\ell = L+1}^\infty \P_\eta( W_{0,j} \geq \ell ) 
= \sum_{j=1}^M \E_\eta\left[(W_{0,j}-L)_+ \right].
\end{align*}
Note that this upper bound does not depend on $k\geq 1$ and can be made arbitrarily small by taking $L$ sufficiently large. Thus, for any $\e>0$ there exists an $L_\e < \infty$ such that 
\begin{equation}
 \P_\eta( X_{\tau_k} - k \geq L_\e ) \leq \e, \quad \forall k\geq 1. 
\end{equation}

Having completed the above three preliminary steps, we are now ready to prove the tightness claimed in the statement of the lemma. 
For $\e>0$, let $\mathcal{K}_\e$, $F_\e$ and $L_\e$ be as in steps 1--3 above. Then, $(\mathcal{K}_{\e} \cap F_\e) \times [0,L_\e]$ is a compact subset of $\Psi$, and 
\begin{align*}
 \liminf_{n\ra\infty} & \mu_{n}\left( (\mathcal{K}_{\e} \cap F_\e) \times [0,L_\e] \right) 
= \liminf_{n\ra\infty} \frac{1}{n} \sum_{k=1}^n \P_\eta\left( \zeta_k \in  (\mathcal{K}_{\e} \cap F_\e) \times [0,L_\e]  \right) \\
&\geq 1 - \limsup_{n\ra\infty} \frac{1}{n} \sum_{k=1}^n \left\{ \P_\eta\left( \th^k \bar{\w}(\tau_k) \notin \mathcal{K}_{\e} \cap F_\e \right) + \P_\eta\left( X_{\tau_k} - k > L_\e \right) \right\}  \\
&\geq 1 - \e -  \limsup_{n\ra\infty} \frac{1}{n} \sum_{k=1}^n \eta\left( \th^k \w \notin \mathcal{K}_{\e} \cap F_\e \right) \\
&\geq 1-  \e  - \left\{ \eta( \w \notin \mathcal{K}_\e ) + \eta\left( \w \notin F_\e \right) \right\} \\
&\geq 1- 4 \e.
\end{align*}
where the second inequality follows from the properties of $\mathcal{K}_\e$ and $F_\e$ and the fact that $\th^k \bar{\w}(\tau_k) \leq \th^k \w$. 
Since $\e>0$ was arbitrary, this completes the proof of the lemma. 
\end{proof}

\begin{cor}\label{cor:invariant}
 If Assumptions \ref{asmMmu}--\ref{asmiid} hold 
then there exists a stationary probability measure $\pi$ on $\Psi$ for the Markov chain $\zeta_n$ with the following properties. 
\begin{enumerate}
 \item The environment to the right of the origin $\{\w_x \}_{x \geq 0}$ has the same distribution under $\pi$ as under the measure $\eta$. 
Moreover, if $(\w,X_0)$ has distribution $\pi$ then $\{\w_x\}_{x\geq 0}$ is independent of $X_0$ and the environment to the left of the origin $\{\w_x \}_{x < 0}$. 
\label{piprop1}
 \item $E_\pi[ \bar{d}(\w_y) ] \leq \d$ for all $y \in \Z$. 
\end{enumerate}
\end{cor}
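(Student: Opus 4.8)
The plan is to build $\pi$ by the Krylov--Bogolyubov recipe: extract a subsequential weak limit $\pi$ of the Cesàro averages $\mu_n$ from \eqref{mundef}, and then transfer to $\pi$ the two structural properties, which are already visible at the level of each $\mu_n$. The one genuinely delicate point is the first step: since jumps may be unbounded, neither $\Psi$ nor $\Omega_{M,\mu}^+\times\Z_+$ is compact or even closed, so one cannot simply invoke compactness but must squeeze the weak limit through the carefully engineered compacts of Lemma~\ref{lem:tightness} and — crucially — use them to certify that $\pi$ is carried by $\Psi$ and by $\Omega_{M,\mu}^+\times\Z_+$; this last point is exactly what legitimizes the drift bookkeeping in property (ii).

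For existence and invariance: by Lemma~\ref{lem:tightness} the sequence $\{\mu_n\}$ is tight, so some subsequence converges weakly to a probability measure $\pi$. The compact sets $(\mathcal{K}_\e\cap F_\e)\times[0,L_\e]$ produced in the proof of Lemma~\ref{lem:tightness} lie inside $\Psi\cap(\Omega_{M,\mu}^+\times\Z_+)$ and eventually carry $\mu_n$-mass $\ge 1-4\e$, so by the portmanteau theorem and $\e\downarrow 0$ we get $\pi(\Psi)=1$ and $\pi(\Omega_{M,\mu}^+\times\Z_+)=1$. For invariance I would use the weak Feller property (Lemma~\ref{lem:weakFeller}): for bounded continuous $f$ on $\Psi$ the map $(\w,x)\mapsto E_\w^x[f(\zeta_1)]$ is again bounded and continuous, and
\[
 \int E_\w^x[f(\zeta_1)]\,\mu_n(d\w\,dx)=\mu_n(f)+\tfrac1n\big(\E_\eta[f(\zeta_{n+1})]-\E_\eta[f(\zeta_1)]\big).
\]
Passing to the limit along the subsequence shows $\pi$ and $\pi P$ agree on all bounded continuous functions; since a finite Borel measure on a metric space is determined by its integrals against bounded (Lipschitz) functions, $\pi P=\pi$, so $\pi$ is stationary for $\zeta_n$.

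For property \ref{piprop1}, work under $\P_\eta$. The key observation is that up to time $\tau_k$ the walk never leaves $(-\infty,k-1]$. This forces $L_{\tau_k-1}(z)=0$ for all $z\ge k$, so after the shift the environment part of $\zeta_k$ agrees with $\w$ at every site $\ge 0$, i.e.\ it equals $\{\w_{x+k}\}_{x\ge 0}$; and it forces every step before $\tau_k$ to be taken from a site $\le k-1$, so that the path $(X_0,\dots,X_{\tau_k})$ — hence $L_{\tau_k-1}$ restricted to $\{z<k\}$ and the overshoot $X_{\tau_k}-k$ — is a measurable function of $\{\w_z\}_{z<k}$ and the jump variables at sites $<k$ alone. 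Since $\{\w_z\}_{z\in\Z}$ is i.i.d.\ (Assumption~\ref{asmiid}) and the jump variables are conditionally independent across sites, $\{\w_{x+k}\}_{x\ge0}$ has the $\eta$-law of $\{\w_x\}_{x\ge0}$ and is independent of the pair $\big(\{(\theta^k\bar\w(\tau_k))_x\}_{x<0},\,X_{\tau_k}-k\big)$. Both facts are preserved by the averaging \eqref{mundef} and by weak limits, hence hold under $\pi$.

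Finally, property (ii). For $y\ge0$ it is immediate from \ref{piprop1}: $E_\pi[\bar d(\w_y)]=E_\eta[\bar d(\w_0)]=\d$. For $y<0$ I would apply the invariance of $\pi$ to the nonnegative functional $h(\w,x)=\bar d(\w_y)$: since $\zeta_1\sim\pi$ and $\pi$ is carried by $\Omega_{M,\mu}^+\times\Z_+$, every cookie has nonnegative drift, so deleting the first $L_{\tau_1-1}(y+1)$ cookies from site $y+1$ only decreases the drift there, giving $\bar d\big((\theta\bar\w(\tau_1))_y\big)=\bar d(\w_{y+1})-D_{\tau_1}^{y+1}$ with $0\le D_{\tau_1}^{y+1}\le\bar d(\w_{y+1})$. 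Taking expectations yields $E_\pi[\bar d(\w_y)]=E_\pi[\bar d((\theta\bar\w(\tau_1))_y)]\le E_\pi[\bar d(\w_{y+1})]$, and iterating this from $y=-1$ downward gives $E_\pi[\bar d(\w_y)]\le\cdots\le E_\pi[\bar d(\w_0)]=\d$. As noted, the obstacle here is entirely in the first step; once $\pi$ is known to be a genuine stationary measure supported on $\Omega_{M,\mu}^+\times\Z_+$, properties \ref{piprop1} and (ii) are routine.
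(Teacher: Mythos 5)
Your existence, stationarity (tightness + weak Feller + Krylov--Bogolyubov) and property (i) arguments are essentially the same as the paper's. For property (ii), however, you take a genuinely different route: you settle $y\geq 0$ from (i) and then propagate leftward by applying stationarity of $\pi$ together with the monotonicity $\bar d\bigl((\theta\bar\w(\tau_1))_y\bigr)=\bar d(\w_{y+1})-D_{\tau_1}^{y+1}$ with $D_{\tau_1}^{y+1}\geq 0$. The paper instead computes $E_{\mu_n}[\bar d(\w_y)]=\frac1n\sum_{k=1}^n\E_\eta[\bar d(\w_{k+y})-D_{\tau_k}^{k+y}]\leq\delta$ under each $\mu_n$ — where membership of $\Omega_{M,\mu}^+$ is automatic since $\eta(\Omega_{M,\mu}^+)=1$ — and then passes to $\pi$ via uniform integrability. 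Your approach is shorter, but it hinges on the claim that $\pi(\Omega_{M,\mu}^+\times\Z_+)=1$, and the justification you offer for that claim has a gap.

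Specifically, you assert that the compacts $(\mathcal{K}_\e\cap F_\e)\times[0,L_\e]$ from Lemma~\ref{lem:tightness} lie in $\Omega_{M,\mu}^+\times\Z_+$, so a portmanteau argument transfers the property to $\pi$. But $\Omega_{M,\mu}^+$ is \emph{not} closed in $\Omega$ in the product weak topology: $\mathcal{M}_+$ is not weak-$*$ closed in $M_1(\Z)$ (e.g.\ $(1-1/n)\delta_{-1}+(1/n)\delta_n\in\mathcal{M}_0$ converges to $\delta_{-1}\notin\mathcal{M}_+$), and the sets $\mathcal{C}_\delta$ are produced by plain Prokhorov tightness with no requirement that they sit inside $\mathcal{M}_+$. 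So ``lies inside $\Omega_{M,\mu}^+$'' does not come for free from that lemma. The claim is nonetheless true and can be repaired in two ways: (a) refine $\mathcal{C}_\delta$ to also enforce uniform first-moment tail control via Assumption~\ref{asmic}, since a set of measures with uniformly integrable first moments and nonnegative mean \emph{is} weak-$*$ closed; or (b) — in the spirit of your own iteration — set $A_y=\{(\w,x):d(\w_{y,j})\geq 0,\ 1\leq j\leq M\}$, note $\pi(A_0)=1$ from property (i), observe that the cookie-removal dynamics send $A_{y+1}$ into $A_y$ so stationarity gives $\pi(A_y)\geq\pi(A_{y+1})$, and iterate downward. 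Either way the step must be made explicit; as written, the supporting reasoning for $\pi(\Omega_{M,\mu}^+\times\Z_+)=1$ is incorrect even though the conclusion holds. (You should also remark that $E_\pi[\bar d(\w_y)]=E_\pi[\bar d((\theta\bar\w(\tau_1))_y)]$ is legitimate because $\bar d(\w_y)\geq 0$ a.s.\ once the support claim is in hand, so both sides are well-defined in $[0,\infty]$ and stationarity applies to nonnegative measurable functions.)
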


\begin{proof}
It follows from Lemma \ref{lem:tightness} that the sequence of measures $\mu_{n}$ is tight, and thus there exists a subsequence $n_m$ such that $\mu_{n_m}$ converges to a measure $\pi$ on $\Psi$.
The fact that $\pi$ is necessarily a stationary distribution for the Markov chain $\zeta_n$ follows from the fact that $\zeta_n$ is weakly Feller (see the proof of Theorem 12.0.1(i) in  \cite{mtMCSS}). 

To prove the claimed properties of the stationary measure $\pi$, let $A \subset \Omega$ be a measurable with respect to $\s(\w_x, \, x< 0)$, $B \subset \Omega$ be measurable with respect to $\s(\w_x, \, x\geq 0)$, and $y \in \Z_+$. Then 
\begin{align*}
 \mu_n( (A \cap B) \times \{y\})
&= \frac{1}{n} \sum_{k=1}^n \P_\eta \left( \theta^k \bar{\w}(\tau_k) \in A \cap B, \, X_{\tau_k} = k+y \right) \\
&= \frac{1}{n} \sum_{k=1}^n \P_\eta \left( \theta^k \bar{\w}(\tau_k) \in A, \, X_{\tau_k} = k+y \right) \eta( \th^k \w \in B ) \\
&= \mu_n( A \times \{y\})\eta( \w \in B ),
\end{align*}
where the second equality follows from the i.i.d.\ assumption for the law $\eta$ on cookie environments. 
It follows from this that the claimed properties in \ref{piprop1} hold. 

For the second claimed property of $\pi$, note that
\[
 E_{\mu_n} [ \bar{d}(\w_y)]  = \frac{1}{n} \sum_{k=1}^n \E_\eta\left[ \bar{d}(\w_{k+y}) - D_{\tau_k}^{k+y} \right] 
\leq  \frac{1}{n} \sum_{k=1}^n \E_\eta[ \bar{d}(\w_{k+y}) ] 
= \d.
\]
Therefore, to conclude that $E_\pi[\bar{d}(\w_y)] \leq \d$ we need only to show uniform integrability of $\bar{d}(\w_y)$ under the sequence of measures $\mu_n$. For any $L<\infty$
\begin{align*}
  E_{\mu_n} [ \bar{d}(\w_y) \ind{\bar{d}(\w_y) \geq L} ]  
&= \frac{1}{n} \sum_{k=1}^n \E_\eta\left[ \left( \bar{d}(\w_{k+y}) - D_{\tau_k}^{k+y} \right) \ind{ \bar{d}(\w_{k+y}) - D_{\tau_k}^{k+y} \geq L} \right] \\
&\leq  \frac{1}{n} \sum_{k=1}^n \E_\eta[ \bar{d}(\w_{k+y}) \ind{ \bar{d}(\w_{k+y}) \geq L}  ]
= \E_\eta[ \bar{d}(\w_{0}) \ind{ \bar{d}(\w_{0}) \geq L}  ].
\end{align*}
Note that this upper bound is uniform over $n$ and can be made arbitrarily small by taking $L\ra\infty$. This proves the required uniform integrability. 
\end{proof}


For the remainder of the paper, we will often consider an excited random walk where both the cookie environment $\w$ and the starting location $X_0$ are random. If the joint distribution of $(\w,X_0)$ is given by some measure $\a$ on $\Omega \times \Z$ then in a slight abuse of notation we will use $\P_\a(\cdot) = E_\a[ P_\w^{X_0}(\cdot) ]$ to denote the averaged distribution of this walk. Corresponding expectations will be denoted $\E_\a$. 
In particular, $\P_\pi$ will denote the law of the excited random walk when $(\w,X_0)$ are chosen from the stationary measure $\pi$ from Corollary \ref{cor:invariant}. 
We close this section with a result concerning the behavior of the excited random walk under this measure.

\begin{lem}\label{EpiDinf}
 If $\pi$ is an invariant probability measure for $\zeta_n$ as given in Corollary \ref{cor:invariant}, then $\E_\pi[ D_\infty^0] \leq 1$, where 
$D_\infty^0 = \lim_{n\ra\infty} D_n^0$ is the total drift ever consumed by the walk at the origin.  
\end{lem}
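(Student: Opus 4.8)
The plan is to exploit the martingale estimate from Lemma~\ref{lem:mgpos} together with the stationarity of $\pi$ and the translation-invariance properties of $\pi$ from Corollary~\ref{cor:invariant}. The key observation is that under $\P_\pi$, by property~\ref{piprop1} of Corollary~\ref{cor:invariant}, the environment $\{\w_x\}_{x\geq 0}$ has the same law as under $\eta$ and is independent of $X_0$ and of $\{\w_x\}_{x<0}$; in particular the two hypotheses of Lemma~\ref{lem:mgpos} are satisfied $\pi$-a.s.\ (the first since $\{\w_x\}_{x\geq 0}$ agrees in law with an $\Omega_{M,\mu}^+$ environment, the second since $(\w,X_0)\in\Psi$ $\pi$-a.s.\ and one can argue that $P_\w^x(\limsup X_n=\infty)=1$ for all $x\geq 0$ from the definition of $\Psi$ and Proposition~\ref{limsupprop}-type reasoning applied to the right half-line). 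Thus \eqref{EwDplus} gives, for each $n\geq 1$ and $\pi$-a.e.\ $(\w,X_0)$ with $X_0\in[0,n)$,
\[
 E_\w^{X_0}[D_{\tau_n}^+] \leq n + E_\w^{X_0}\left[(X_{\tau_n}-n)\ind{X_{\tau_n-1}\geq 0}\right].
\]

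Next I would average over $\pi$ and analyze the limit $n\to\infty$. Since $X_0$ has a fixed distribution on $\Z_+$ under $\pi$, we have $X_0\in[0,n)$ eventually, so the bound applies for all large $n$. The quantity $D_{\tau_n}^+ = \sum_{x\geq 0} D_{\tau_n}^x$ increases to $D_\infty^+ = \sum_{x\geq 0}D_\infty^x \geq D_\infty^0$ as $n\to\infty$ (using $\limsup X_n=\infty$ so $\tau_n\to\infty$), so by monotone convergence $\E_\pi[D_\infty^0]\leq \liminf_n \E_\pi[D_{\tau_n}^+]$. Actually, to get the sharp constant $1$ rather than something larger, I expect one needs a more careful argument: divide through by... no — rather, one should note that $\E_\pi[D_{\tau_n}^+]$ is \emph{not} what directly controls $D_\infty^0$ with constant $1$; instead I would use stationarity to rewrite $\E_\pi[D_{\tau_n}^+]$ as a Cesàro-type sum and extract the per-site contribution. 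Concretely, by the translation structure, $\E_\pi\left[\sum_{x=0}^{n-1} D_\infty^x\right]$ relates to $n\cdot \E_\pi[D_\infty^0]$ up to boundary terms, because under the stationary measure the consumed drift at site $x$ (as seen from the appropriate shift) has a $\pi$-consistent law. So the real target inequality to set up is
\[
 \E_\pi\left[\sum_{x=0}^{n-1} D_\infty^x\right] \;\leq\; n + \E_\pi\left[(X_{\tau_n}-n)_+\right],
\]
then divide by $n$ and let $n\to\infty$: the left side is (asymptotically) $\E_\pi[D_\infty^0]$ by a stationarity/ergodic-averaging argument, and the right side tends to $1$ since $\E_\pi[(X_{\tau_n}-n)_+]$ is bounded uniformly in $n$ (by the Step~3 computation in the proof of Lemma~\ref{lem:tightness}, which shows $\E_\pi[(X_{\tau_n}-n)_+]\leq \sum_{j=1}^M\E_\eta[(W_{0,j}-L)_+]+L$ type bounds, hence $o(n)$, in fact $O(1)$).

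The main obstacle I anticipate is justifying the identification of $\frac1n\E_\pi[\sum_{x=0}^{n-1}D_\infty^x]$ with $\E_\pi[D_\infty^0]$ in the limit. This requires understanding how $D_\infty^x$ under $\P_\pi$ transforms under the spatial shift, and the subtlety is that $D_\infty^x$ depends on the \emph{entire future trajectory} of the walk past site $x$, while the stationary measure $\pi$ only controls the environment-from-the-particle at the regeneration-like times $\tau_n$. I would handle this by working at the level of the cookie environment process: at time $\tau_n$ the remaining environment is $\theta^n\bar\w(\tau_n)$ which has law (the first-coordinate marginal of) $\pi$ by stationarity, and $D_\infty^0$ computed for the walk restarted from $\zeta_n$ corresponds to $D_\infty^n$ (drift consumed at site $n$) for the original walk — but one must be careful that the walk may return to site $n$ after time $\tau_n$, so $D_\infty^n \neq D_{\tau_n}^n$ in general. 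The cleanest route is probably to not split site-by-site at all, but rather: apply \eqref{EwDplus} directly, take $\E_\pi$, use monotone convergence to get $\E_\pi[D_\infty^+]\leq \liminf_n\{n+\E_\pi[(X_{\tau_n}-n)_+]\}$ — which is useless as stated since the RHS diverges — and instead observe via stationarity that $\E_\pi[D_{\tau_n}^+] - \E_\pi[D_{\tau_{n-1}}^+]$ telescopes and its increments are controlled, so that $\E_\pi[D_{\tau_n}^+]\leq n\,\E_\pi[D_\infty^0] + O(1)$ would follow from a reverse martingale/stationarity argument; combined with the displayed bound this forces $\E_\pi[D_\infty^0]\leq 1$. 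Working out exactly which telescoping identity makes this rigorous, and confirming the boundary terms are genuinely $O(1)$ and not merely $o(n)$, is where the care is needed.
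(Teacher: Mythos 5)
Your overall framework is correct and matches the paper's: apply Lemma~\ref{lem:mgpos}, average over $\pi$, bound the overshoot $\E_\pi[(X_{\tau_n}-n)\ind{X_{\tau_n-1}\geq 0}]$ using the structure of the environment to the right of the origin, and then extract $\E_\pi[D_\infty^0]$ by stationarity. The overshoot bound you get is in fact $n+o(n)$ for $\E_\pi[D_{\tau_n}^+]$ (not $O(1)$ for the overshoot, and $O(1)$ is not needed).

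The gap you correctly flag --- turning $\E_\pi[D_{\tau_n}^+]\leq n+o(n)$ into a bound on $\E_\pi[D_\infty^0]$ --- is real, and your proposed resolutions do not quite close it. The telescoping of $\E_\pi[D_{\tau_n}^+]-\E_\pi[D_{\tau_{n-1}}^+]$ has increments $\sum_{x\geq 0}(D_{\tau_n}^x-D_{\tau_{n-1}}^x)$ that spread over all sites and do not admit a clean stationarity identity; and your final inequality $\E_\pi[D_{\tau_n}^+]\leq n\,\E_\pi[D_\infty^0]+O(1)$ points the wrong way (you need a \emph{lower} bound on $\E_\pi[D_{\tau_n}^+]$ in terms of $\E_\pi[D_\infty^0]$ to combine with the upper bound $n+o(n)$). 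The paper's key move, which sidesteps the whole issue of comparing $D_\infty^x$ to $D_{\tau_n}^x$, is to \emph{truncate at a fixed horizon $k$ first}: one has the elementary pointwise inequality $D_{\tau_n}^+\geq\sum_{x=0}^{n-k}D_{\tau_{x+k}}^x$ (each summand counts drift at site $x$ only up to time $\tau_{x+k}\leq\tau_n$), and by stationarity of the cookie environment process $\E_\pi[D_{\tau_{x+k}}^x]=\E_\pi[D_{\tau_k}^0]$ for every $x\geq 0$ --- this works because $D_{\tau_{x+k}}^x$ is a functional of $(\zeta_x,\dots,\zeta_{x+k})$, and the walk has consumed no drift at site $x$ before time $\tau_x$. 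This yields $(n-k+1)\,\E_\pi[D_{\tau_k}^0]\leq n+o(n)$; dividing by $n$ and sending $n\to\infty$ gives $\E_\pi[D_{\tau_k}^0]\leq 1$ for every $k$, and then $k\to\infty$ with monotone convergence gives $\E_\pi[D_\infty^0]\leq 1$. You should insert this truncation step; once $k$ is fixed before $n\to\infty$, the comparison with the full $D_\infty^0$ is deferred to a single harmless monotone-convergence pass at the end.
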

\begin{proof}
Recalling \eqref{EwDplus}, for any $(\w,x) \in \Psi$ and $0\leq x<n$ we obtain that 
\begin{align*}
 E_\w^x[ D_{\tau_n}^+ ] &\leq n + E_\w^x\left[ (X_{\tau_n}-n) \ind{X_{\tau_n-1}\geq 0} \right]\\
&\leq n + B + \sum_{j=1}^M \sum_{y=0}^{n-1} E_\w\left[ (W_{y,j} + y-n)_+ \right], 
\end{align*}
where the last inequality follows from an argument similar to that of \eqref{rightosub}. 
Then, averaging both sides of the above inequality with respect to the measure $\pi$ on $(\w,x)$ we obtain that 
\begin{align*}
 \E_\pi[ D_{\tau_n}^+ ] 
&\leq n + B + \sum_{j=1}^M \sum_{y=0}^{n-1} \E_\eta \left[ (W_{y,j} + y-n)_+ \right] \nonumber \\
&= n + B + \sum_{j=1}^M \sum_{k=1}^n \E_\eta\left[ (W_{0,j} - k)_+ \right] \nonumber \\
&= n + o(n), \label{EpiOS}
\end{align*}
where in the first inequality we used also that the environment has the same distribution to the right of the origin under $\pi$ as under the original measure $\eta$, and the last equality follows from the fact that $\E_\eta[ (W_{0,j} - k)_+ ] \ra 0$ as $k\ra \infty$. 

Next, for $1\leq k \leq n$ we have that $D_{\tau_n}^+ \geq \sum_{x=0}^{n-k} D_{\tau_{x+k}}^x$. 
Since $\pi$ is a stationary measure for the cookie environment process it follows that we have that $\E_\pi[ D_{\tau_{x+k}}^x ] = \E_\pi[ D_{\tau_k}^0 ]$ for all $x \geq 0$ and $k\geq 1$, and thus
\begin{equation}\label{Dtaupluslb}
(n-k+1) \E_\pi[ D_{\tau_k}^0 ] \leq \E_\pi\left[ D_{\tau_n}^+ \right] \leq n + o(n).  
\end{equation}
Dividing by $n$, letting $n\ra\infty$ and then letting $k\ra\infty$ we obtain that 
\[
 \E_\pi[D_\infty^0] = \lim_{k\ra\infty} \E_\pi[ D_{\tau_k}^0 ] \leq 1.
\]
\end{proof}

\section{A zero-one law}\label{sec:01}

We showed under minimal assumptions in Theorem \ref{thm:ergrt}\ref{K01law} that $\P_\eta( \mathcal{R} \cup \mathcal{T}_+) = 1$. That is, the excited random walk is always either recurrent or transient to the right. 
In this section, we will show that with the addition of Assumptions \ref{asmiid} and \ref{asmell} that we can prove the following stronger 0-1 law for recurrence and transience. We will prove the 0-1 law under both the measures $\P_\eta$ and $\P_\pi$. 
\begin{prop}\label{01lemcor}
Let Assumptions \ref{asmMmu}--\ref{asmell} hold. Then, $\P_\eta( \mathcal{T}_+ ) = 1-\P_\eta(\mathcal{R}) \in \{0,1\}$ and $\P_\pi( \mathcal{T}_+ ) = 1-\P_\pi(\mathcal{R}) \in \{0,1\}$, where $\pi$ is the stationary measure for the cookie environment process from Corollary \ref{cor:invariant}.
\end{prop}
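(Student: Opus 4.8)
The plan is to establish a standard Kalikow-type 0-1 law argument, adapted to this excited random walk setting, by showing that the event $\mathcal{T}_+$ is a tail-type event whose probability must therefore be $0$ or $1$ under each of the measures $\P_\eta$ and $\P_\pi$. The starting point is Theorem~\ref{thm:ergrt}\ref{K01law}, which already tells us that $\P_\eta(\mathcal{R} \cup \mathcal{T}_+) = 1$, so it suffices to prove $\P_\eta(\mathcal{T}_+) \in \{0,1\}$ (and similarly for $\P_\pi$, once one checks that the analogue of part \ref{K01law} holds under $\P_\pi$, which follows since $\pi$-a.e.\ environment is in $\Psi$ by construction and $\pi$ agrees with $\eta$ to the right of the origin). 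The key structural observation is that under the spatial i.i.d.\ Assumption~\ref{asmiid}, if the walk is transient to the right then after it leaves any bounded region to the left forever, the future of the walk depends only on a fresh piece of the environment that is independent of the past; conversely, recurrence is a ``local'' event in the sense that, by the ellipticity Assumption~\ref{asmell}, from any configuration there is positive probability of never returning.

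The key steps, in order, would be: (1) Define $q = q(\w) := P_\w(\mathcal{T}_+)$ and $p(\w) := P_\w(\mathcal{R})$, so that $p(\w) + q(\w) = 1$ for $\eta$-a.e.\ $\w$ by the already-proved part \ref{K01law}. (2) Show that the function $\w \mapsto q(\w)$ is invariant (up to $\eta$-null sets) under the spatial shift $\theta$: intuitively, $P_\w(\mathcal{T}_+)$ and $P_{\theta\w}(\mathcal{T}_+)$ differ only by whether the walk, started at $0$ in $\w$ versus started at $0$ in $\theta\w$ (equivalently at $1$ in $\w$ with one cookie already consumed nowhere), eventually goes to $+\infty$; since going to $+\infty$ forces the walk to reach every site to the right eventually and does not depend on finitely many initial steps, a coupling argument using Assumption~\ref{asmell} (positive probability of a first jump to the right, positive probability of all cookies at a site being non-positive) lets one compare $q(\w)$ and $q(\theta\w)$. (3) By ergodicity of $\eta$ (which holds since the sequence is i.i.d.), a $\theta$-invariant function is $\eta$-a.s.\ constant, so $q(\w) \equiv c$ for some constant $c \in [0,1]$, and hence $\P_\eta(\mathcal{T}_+) = c$. (4) To upgrade $c \in [0,1]$ to $c \in \{0,1\}$, use a zero-one argument at the level of a single environment: condition on the walk reaching a far-right level $\tau_N$, note that by the Markov/renewal structure of the cookie environment process the conditional law of the future transience event is governed by $q(\theta^N \bar\w(\tau_N))$, which by step (2)--(3) equals $c$ again; combining $\{\mathcal{T}_+\} = \bigcap_N \{\tau_N < \infty\}$ with the fact that on $\{\tau_N < \infty\}$ the future has probability $c$ of being transient, a Lévy-type martingale argument (or direct estimate) forces $c^2 = c$. (5) Finally, transfer the statement to $\P_\pi$: since $\pi$ agrees with $\eta$ on $\{\w_x\}_{x \ge 0}$ and $X_0$ is independent of the right environment under $\pi$ (Corollary~\ref{cor:invariant}\ref{piprop1}), and since $\mathcal{T}_+$ is determined (up to finitely many steps) by the environment to the right of where the walk eventually stays, one gets $\P_\pi(\mathcal{T}_+) = \P_\eta(\mathcal{T}_+) = c \in \{0,1\}$.

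The main obstacle I expect is step (2)--(4), i.e.\ making the shift-invariance and the $c^2 = c$ argument rigorous in the presence of possibly unbounded jumps and only the weak ellipticity of Assumption~\ref{asmell}. With uniformly elliptic bounded jumps (as in \cite{zERWZdstrip}) one can directly couple walks started at neighboring sites step by step, but here the walk can overshoot any fixed level by an unbounded amount, so the comparison between $q(\w)$ and $q(\theta^N\bar\w(\tau_N))$ must be handled through the cookie environment process and its tightness (Lemma~\ref{lem:tightness}) rather than by a naive coupling. The weak ellipticity forces one to work with events like ``all $M$ cookies at a site are non-positive'' (whose $\eta$-probability is positive by Assumption~\ref{asmell}) to manufacture the regeneration-type structure, rather than having uniform lower bounds on single-step transition probabilities; carefully chaining these low-probability-but-positive events to get the recurrence/transience dichotomy is the delicate technical core. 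The i.i.d.\ Assumption~\ref{asmiid} is what ultimately rescues the argument, since it makes distinct blocks of the environment genuinely independent and lets the ergodic theorem / Kolmogorov 0-1 law do the final work.
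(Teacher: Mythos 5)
Your plan takes a genuinely different route from the paper, and it has a concrete gap. The paper does not argue via shift-invariance of $q(\w) := P_\w(\mathcal{T}_+)$ plus a Lévy-type $c^2 = c$ argument. Instead, it introduces the event $\mathcal{P} = \{X_n \geq X_0,\ \forall n\geq 0\}$, proves (Lemma~\ref{01lem}) that $\P_\eta(\mathcal{T}_+)>0$ forces $\beta := \P_\eta(\mathcal{P})>0$, and then runs a geometric-trials argument at \emph{new maximum} times $N_j$: when the walk reaches a fresh running maximum, the cookie environment to its right is i.i.d.\ and untouched, so the conditional probability of never dropping below the current position is exactly $\beta$, giving $\P_\eta(B_j<\infty) = (1-\beta)^j \to 0$ and hence $\P_\eta(\mathcal{T}_+)=1$. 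Proving Lemma~\ref{01lem} is the technical core and is where all the work with Assumption~\ref{asmell}, the interval construction (Lemma~\ref{lem:Kint}), and the unbounded-jump case splitting happens.

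The gap in your step (4) is that the conditional probability of $\mathcal{T}_+$ given $\mathcal{F}_{\tau_N}$ is $P_{\theta^N\bar\w(\tau_N)}^{X_{\tau_N}-N}(\mathcal{T}_+)$, but the remnant environment $\theta^N\bar\w(\tau_N)$ is \emph{not} $\eta$-distributed: its left half has had cookies consumed in a way correlated with the walk's past. So the statement ``$q(\w)=c$ for $\eta$-a.e.\ $\w$'' gives you no control over $q$ evaluated at the remnant environment, and the claimed $c^2=c$ does not follow. The paper's choice of $\mathcal{P}$ rather than $\mathcal{T}_+$ is precisely what sidesteps this: $\mathcal{P}$ depends only on $\{\w_x\}_{x\geq X_0}$, which \emph{is} fresh and $\eta$-distributed at new maximum times, whereas $\mathcal{T}_+$ is entangled with the consumed left environment. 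Your step (2) (shift-invariance of $q$ up to null sets) is also asserted rather than proved and is not obvious, since $P_\w^0(\mathcal{T}_+)$ and $P_\w^1(\mathcal{T}_+)$ differ because the walk started at $0$ arrives at $1$ with an already-modified environment; with unbounded jumps a naive step-by-step coupling does not close this gap. To repair the argument you would need, at minimum, an analogue of the paper's reduction to an event that depends only on the environment to the right of a regeneration-type time (this is what the event $\mathcal{P}$ and Lemma~\ref{01lem} accomplish), at which point you would essentially be reproducing the paper's proof.
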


\begin{rem}
 It should be noted that the proof of the corresponding 0-1 law for nearest neighbor excited random walks is significantly easier. 
In fact, if we assume a strong form of ``uniform ellipticity'' then many of the technical difficulties in this section may be avoided. 
In particular, the proof of the 0-1 law for excited random walks in \cite[Lemma 9 and Proposition 10]{zERWZdstrip} can be used if we assume that there exist constants $c,C>0$ such that $c\mu(z) \leq \w_{x,j}(z) \leq C \mu(z)$ for all $x\in \Z$ and $j\geq 1$. 
\end{rem}


To prepare for the proof of Proposition \ref{01lemcor}
we begin by noting that Assumption \ref{asmmu} implies that there exists an integer $K_0 > B$ such that for any $K\geq K_0$ the simple random walk with jump distribution $\mu$ is irreducible restricted to $[0,K-1]$. That is, for $K\geq K_0$ and $x,y \in [0,K-1]$ there exists a finite path $\mathbf{z} = (z_0,z_1,\ldots,z_m)$ beginning at $z_0 = x$, ending at $z_m=y$, with $z_k \in [0,K-1]$ and $\mu(z_k-z_{k-1})>0$ for $k=1,2,\ldots,m$. 

Secondly, there exists an integer $K_1 \geq B$ such that for any $K\geq K_1$ we have
\[
 c_K^-:= E_\eta\left[ \prod_{j=1}^M \sum_{z=-K}^{0} \w_{0,j}(z) \right] > 0
\quad \text{and}\quad
 c_K^+:= E_\eta\left[ \prod_{j=1}^M \sum_{z=0}^K \w_{0,j}(z) \right] > 0. 
\]
Note that $c_K^-$ and $c_K^+$ are the probability that on the first $M$ visits to a fixed site all jumps are in $[-K,0]$  or $[0,K]$, respectively. 

\begin{lem}\label{lem:Kint}
Let Assumptions \ref{asmMmu}--\ref{asmell} hold. Then, for any $K \geq \max\{K_0,K_1\}$
\begin{equation}\label{Kintlb}
 \P_\eta\left( \tau_{2K} < \s_{-1}, \, X_{\tau_{2K}} = 2K, \, L_{\tau_{2K}}(x) \geq M, \, \forall x \in [K,2K-1] \right) > 0. 
\end{equation}
\end{lem}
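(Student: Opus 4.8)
The plan is to build an explicit favorable event path-by-path and then lower-bound its probability. The quantity in \eqref{Kintlb} asks that the walk, started at $0$, reach $2K$ before going below $-1$, that it land exactly on $2K$ (no overshoot) when it first reaches $[2K,\infty)$, and that each site $x\in[K,2K-1]$ has been visited at least $M$ times by that moment — i.e.\ all cookies in the block $[K,2K-1]$ have been consumed. Since $K\geq K_0$, the walk with step distribution $\mu$ is irreducible on $[0,K-1]$ (and by translation on any interval of length $K$), and since $K\geq K_1$, the quantities $c_K^-, c_K^+$ are positive, meaning that on each of the first $M$ visits to a site there is positive $\eta$-probability that all those cookies only permit jumps in $[-K,0]$ (resp.\ $[0,K]$). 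The strategy is to construct a deterministic finite sequence of "stages": first force the walk to stay in a bounded region near the origin while it accumulates enough visits, then sweep the block $[K,2K-1]$ from left to right, exhausting the $M$ cookies at each site one at a time, and finally step from a site in $[2K-K,2K-1]$ exactly onto $2K$.

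More concretely, I would first restrict attention to the (positive-$\eta$-probability) set of cookie environments $\w$ for which, at every site $x\in[0,2K-1]$, all of the first $M$ cookies assign positive probability to some jump in $[-K,0]$ and positive probability to some jump in $[0,K]$ — and moreover, by Assumption \ref{asmell}, the first cookie $\w_{x,1}$ assigns positive probability to $[1,\infty)$, hence (one can arrange) positive probability to a specific jump of size in $[1,K]$; by spatial i.i.d.\ (Assumption \ref{asmiid}) and finite range considerations this set has positive $\eta$-measure, and on it one works with the quenched measure $P_\w$. Then for such $\w$ I would write down an explicit finite path $\mathbf{z}=(z_0=0,z_1,\dots,z_m=2K)$ that (i) never exits $[0,2K-1]$ until the final step, (ii) visits each $x\in[K,2K-1]$ at least $M$ times, and (iii) takes the last step from some $z_{m-1}\in[K,2K-1]$ directly to $2K$, a jump of size $2K-z_{m-1}\in[1,K]$. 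Such a path exists: irreducibility on the length-$K$ interval lets us shuttle the walk around to rack up visits, and we have positive-probability jumps available in both directions of size up to $K$. Each individual step along $\mathbf{z}$ has positive quenched probability — for the $j$-th visit to a site with $j\leq M$ the relevant cookie $\w_{x,j}$ charges a jump in $[-K,0]\cup[0,K]$ with positive probability, and for visits beyond the $M$-th the step distribution is just $\mu$, which by aperiodicity and the path being chosen compatible with $\supp\mu$ also charges the required step positively. Multiplying the finitely many positive factors gives $P_\w$(follow path $\mathbf{z}$)$>0$ for each such $\w$, and then integrating over the positive-$\eta$-measure set of good environments gives \eqref{Kintlb}.

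The one genuine subtlety — and the step I expect to be the main obstacle — is reconciling the three path requirements simultaneously with the available jump distributions, in particular ensuring the path can be realized entirely with cookies and $\mu$-steps whose supports we actually control. Assumption \ref{asmell} only gives $\w_{0,1}([1,\infty))>0$ almost surely, not a lower bound on a specific jump size, and the cookies $\w_{x,j}$ for $2\leq j\leq M$ are not assumed to have rightward drift in any strong sense — only non-negative mean — so a priori a cookie could be supported far to the right. This is exactly why we intersect with the event that all these cookies charge $[-K,0]$ and $[0,K]$ with positive probability (which holds with positive $\eta$-probability once $K\geq K_1$, since $c_K^\pm>0$ means a positive fraction of sites are "good," and by i.i.d.\ the probability that a fixed block $[0,2K-1]$ of sites is all good is $(c_K^-)^{2K}(c_K^+)^{2K}$ up to dependence bookkeeping — more carefully, one uses that $\{\w_x\}$ is i.i.d.\ and each site is good with probability at least some $c>0$, so a block of $2K$ sites is all-good with probability at least $c^{2K}>0$). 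Managing this bookkeeping carefully — defining precisely which environments are "good," verifying the block event has positive probability, and exhibiting the path $\mathbf{z}$ whose every step is legal under the good-environment cookies — is the crux; the rest is a routine finite-path positivity argument.
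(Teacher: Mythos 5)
Your overall idea — construct a favorable event path-by-path and lower-bound its $\P_\eta$-probability — matches the paper's approach, but there is a genuine gap in the reduction to ``good'' environments.

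You restrict to cookie environments where, at every site $x\in[0,2K-1]$, each of the first $M$ cookies $\w_{x,j}$ assigns positive probability to \emph{both} $[-K,0]$ and $[0,K]$, and you claim this set has positive $\eta$-measure because $c_K^->0$ and $c_K^+>0$. This does not follow. The positivity of $c_K^-$ guarantees that with positive $\eta$-probability all $M$ cookies at a given site charge $[-K,0]$, and $c_K^+>0$ says the same for $[0,K]$, but these are events about the \emph{same} random stack $\w_0$, and their intersection can be empty. Concretely, take $M=1$ and let $\eta$ put mass $1/2$ on $\w_{0,1}=\frac12\delta_{-1}+\frac12\delta_{2K}$ and mass $1/2$ on $\w_{0,1}=\delta_1$. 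Both cookie types have non-negative drift, Assumptions A--E are all satisfied, and $c_K^-\geq \frac12\cdot\frac12>0$, $c_K^+\geq \frac12\cdot1>0$; yet with $\eta$-probability one the first cookie charges exactly one of $[-K,0]$ and $[0,K]$, never both. So your good set has $\eta$-measure zero in this example. (There is also a secondary conflation in the sentence estimating the block probability by $(c_K^-)^{2K}(c_K^+)^{2K}$: $c_K^\pm$ are \emph{annealed} probabilities that $M$ actual jumps land in a half-interval, not $\eta$-probabilities that the cookie measures place mass there; but this is moot given the main gap.)

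The paper sidesteps the issue by never asking a single site's cookies to charge both directions. In the paper's strategy, the walk uses rightward cookie jumps (into $[0,K]$) when it is at sites in $[0,K-1]$, and leftward cookie jumps (into $[-K,0]$) when it is at sites in $[K,2K-1]$; since these are disjoint sets of sites, Assumption D (spatial i.i.d.) makes the corresponding events independent, and the bound $(c_K^+)^K(c_K^-)^K>0$ enters multiplicatively. When a site in $[K,2K-1]$ has exhausted its cookies, the walk instead attempts a fixed $\mu$-path $\mathbf{x}^{(i)}$ toward $2K$, and the bookkeeping shows that at most $MK$ attempts can fail (each failed attempt burns one cookie in the right block), guaranteeing eventual success. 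To salvage your argument you would need a similar asymmetric assignment — rightward-only cookie behaviour in the left block and leftward-only in the right block — rather than a single ``good'' set of environments where all cookies are bidirectional.
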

\begin{proof}
Let $K\geq \max\{K_0,K_1\}$ be fixed, and for each $i=0,1,2,\ldots,K-1$ let $$\mathbf{x}^{(i)} = (x^{(i)}_0,x^{(i)}_1,\ldots,x^{(i)}_{m_i})$$ be a path of finite length such that 
\begin{itemize}
 \item $\mu(\mathbf{x}^{(i)}) := \prod_{k=1}^{m_i} \mu( x_k^{(i)} - x_{k-1}^{(i)} ) > 0$. That is, the path $\mathbf{x}^{(i)}$ has positive probability for the simple random walk with jump distribution $\mu$. 
 \item The path $\mathbf{x}^{(i)}$ begins at $x_0^{(i)} = i$ and ends at $x_{m_i}^{(i)} = K$. 
 \item $x_k^{(i)} \in [0,K-1]$ for all $k<m_i$, and $\sum_{k=0}^{m_i-1} \ind{x_k^{(i)} = x} \geq 1$ for all $x \in [0,K-1]$. That is, the path stays inside of $[0,K-1]$ up until the last step and the path visits every site in $[0,K-1]$ at least once. 
\end{itemize}
Now, we explain a strategy to force the event in the probability in \eqref{Kintlb} to occur. 
\begin{enumerate}
 \item When the random walk reaches a site $x \in [0,K-1]$
\begin{enumerate}
\item if there remains at least one cookie then the walk jumps to a site in $[x,x+K]$,
\item and if there are no cookies remaining then the walk jumps to a site in $[x+1,x+B]$. 
\end{enumerate}
 \item If the random walk is at a site $x=K+i \in [K,2K-1]$ with no cookies remaining, then the the walk begins following the path $K+\mathbf{x}^{(i)}$ until it either (a) reaches a site with at least one cookie remaining, or (b) reaches $2K$ (in which case there must have been no cookies remaining in $[K,2K-1]$ since the path visits every site in $[K,2K-1]$). 
 \item Whenever the random walk reaches a site $x \in [K,2K-1]$ with at least one cookie remaining, then the following step of the excited random walk is in $[x-K,x]$.
\end{enumerate}
Note that in this procedure, each time the random walk attempts to follow the path $K+\mathbf{x}^{(i)}$ as in the third step outlined above, the walk either succeeds in following the entire path or one cookie is removed from the interval $[K,2K-1]$. Therefore, at least one of the first $M K$ such attempts must succeed. This also limits the number of visits that the walk has at each site in $[0,K-1]$ by time $\tau_{2K}$. From the above outlined strategy we obtain that 
\begin{align*}
& \P_\eta\left( \tau_{2K} < \s_{-1}, \, X_{\tau_{2K}} = 2K, \, L_{\tau_{2K}}(x) \geq M, \, \forall x \in [K,2K-1] \right) \\
& \qquad \geq \left\{ \mu([1,B])^{(MK+1)} c_K^-c_K^+ \right\}^K \left( \prod_{i=0}^{K-1} \mu(\mathbf{x}^{(i)}) \right)^{MK} > 0.
\end{align*}
\end{proof}
As a corollary of the above lemma, we get that almost surely at some point the random walk will reach a new maximum and at that time have visited all $K$ of the sites immediately to the left at least $M$ times (so that no cookies remain in the $K$ sites immediately to the left and none of the sites to the right have been visited yet). 
\begin{cor}\label{lem:Lint}
 Let Assumptions \ref{asmMmu}--\ref{asmell} hold. For any $K \geq \max\{K_0,K_1\}$
\[
 \P_\eta\left( \exists n<\infty: X_{\tau_n} = n \text{ and }  L_{\tau_n}(x) \geq M, \, \forall x \in [n-K,n-1] \right) = 1.  
\]
Moreover, the same result holds for the measure $\P_\pi$. 	
\end{cor}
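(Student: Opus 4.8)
The plan is to use Lemma \ref{lem:Kint} together with a standard restart/renewal argument. Fix $K \geq \max\{K_0, K_1\}$ and define the event
\[
 G = \left\{ \exists n<\infty: X_{\tau_n} = n \text{ and }  L_{\tau_n}(x) \geq M, \, \forall x \in [n-K,n-1] \right\}.
\]
First I would observe that by Proposition \ref{limsupprop} we have $\P_\eta(\limsup_n X_n = \infty) = 1$, so the walk reaches new maxima infinitely often; in fact the stopping times $\tau_{2Kj}$ are all finite $\P_\eta$-a.s. The idea is to look at the successive ``epochs'' between these levels. Starting from the first time the walk is at level $2Kj$ with $X_{\tau_{2Kj}} = 2Kj$ (a situation which, by Lemma \ref{lem:Kint} applied inductively, occurs for infinitely many $j$ with positive probability at each stage — see below), the walk has a chance, uniformly bounded below by the positive constant $p := $ the left side of \eqref{Kintlb} (with the interval shifted to start at $2Kj$), of traversing the next block $[2Kj, 2K(j+1)]$ in such a way that it reaches $2K(j+1)$ before going below $2Kj - 1$, lands exactly at $2K(j+1)$, and exhausts all cookies in $[2K(j+1)-K, 2K(j+1)-1] = [K(2j+1), 2K(j+1)-1]$. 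On that latter event, taking $n = 2K(j+1)$ shows $G$ occurs.

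The key step is to make the ``uniformly bounded below'' precise using the strong Markov property for the excited random walk. If at some stopping time $T$ we have $X_T = 2Kj$ and $L_{T}(x) = 0$ for all $x \geq 2Kj$ (no cookies consumed to the right of the current position), then the conditional law of the future given $\mathcal{F}_T$ is that of an excited random walk started at $2Kj$ in a fresh cookie environment to the right, so by the spatial stationarity of $\eta$ (and translating) the conditional probability of the block-traversal event described above is exactly the quantity in \eqref{Kintlb}, hence $\geq p > 0$, and this bound is uniform in $j$ and in the past. Define $T_0 = 0$ and, recursively, let $T_{j+1}$ be the first time after $T_j$ that the walk is at a site $2K \ell$ for some $\ell$ with $\ell$ minimal such that $2K\ell > X_{T_j}$ reached from the left with no cookies yet consumed weakly to the right; the point is that on the complement of the block event the walk still eventually reaches a strictly higher multiple-of-$2K$ level (by Proposition \ref{limsupprop}) with a fresh environment ahead, so the $T_j$ are a.s. finite and the block events at successive epochs are conditionally independent given the past with probability $\geq p$ each. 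By the conditional Borel–Cantelli lemma, infinitely many block events succeed, so $\P_\eta(G) = 1$.

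For the statement under $\P_\pi$: by Corollary \ref{cor:invariant}\ref{piprop1}, under $\pi$ the environment $\{\w_x\}_{x\geq 0}$ to the right of the origin has the same law as under $\eta$ and is independent of $X_0$ and the environment to the left. Since the event $G$ and the entire argument above depend only on the environment weakly to the right of the starting position (all cookies consumed in the relevant blocks lie at sites $\geq 0$, as the walk never goes below $-1$ during a block epoch once it is at a nonnegative multiple of $2K$ — and we can simply start counting epochs from the first multiple of $2K$ that is $\geq X_0 \vee 0$, which is reached a.s. by Proposition \ref{limsupprop}), the identical argument applies verbatim under $\P_\pi$, giving $\P_\pi(G) = 1$.

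The main obstacle I anticipate is bookkeeping the ``fresh environment to the right'' condition correctly across epochs: I need to ensure that at each restart time $T_j$ the walk has genuinely not yet touched the sites in the next block $[2K\ell, 2K(\ell+1))$, so that Lemma \ref{lem:Kint} (which assumes full cookie stacks in the relevant interval) can be applied via the strong Markov property. This is automatic if at $T_j$ the walk is at its running maximum and that maximum is a multiple of $2K$ — which is exactly how the $T_j$ should be defined — so the argument is clean, but the definition of the $T_j$ must be stated with enough care that ``$X_{T_j}$ is a running maximum equal to a multiple of $2K$'' is maintained, using that whenever the walk fails a block event it still later reaches a strictly larger multiple of $2K$ as a running maximum.
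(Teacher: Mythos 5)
Your core strategy---restart at times when the walk sits at a running maximum with a fresh cookie environment to the right, bound the conditional probability of the Lemma~\ref{lem:Kint} ``block event'' uniformly below by the positive constant $\e_K$ from \eqref{Kintlb}, and conclude by a geometric/conditional Borel--Cantelli argument---is exactly the strategy the paper uses. The $\P_\pi$ case is also handled the same way, via the observation in Corollary~\ref{cor:invariant}\ref{piprop1} that $\pi$ and $\eta$ agree on the environment to the right of the origin.

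However, your specific construction of the restart times $T_j$ has a genuine gap. You insist that $X_{T_j}$ be an exact multiple of $2K$ at which the walk arrives with no cookies yet consumed weakly to the right. Because the walk can make jumps of size $\geq 2$ (indeed possibly unbounded jumps), there is no guarantee the walk is ever at a running maximum that is exactly a multiple of $2K$: each time the walk crosses a level $2K\ell$ for the first time it may overshoot. Proposition~\ref{limsupprop} only says $\limsup_n X_n = \infty$, not that any particular prescribed set of levels is hit exactly, so the claim that ``whenever the walk fails a block event it still later reaches a strictly larger multiple of $2K$ as a running maximum'' does not follow, and with positive probability your $T_{j+1}$ could be infinite. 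In that case your argument provides no information about whether the event in the corollary occurs. The paper sidesteps this entirely by defining $t_0=0$ and $t_{k+1} = \tau_{X_{t_k}+2K}$: since $t_{k+1}$ is a first passage time above level $X_{t_k}+2K$, the walk is automatically at a running maximum $X_{t_{k+1}}$ (possibly an overshoot, and not required to be a multiple of $2K$), the environment to the right of $X_{t_{k+1}}$ is fresh, Lemma~\ref{lem:Kint} is applied translated to start at $X_{t_k}$, and the resulting events $E_k$ satisfy $\P_\eta(E_k \mid \s(X_i,\, i\leq t_k)) \geq \e_K$ for every $k$, giving $\P_\eta(\bigcup_{k<n} E_k) \geq 1-(1-\e_K)^n \ra 1$. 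You should drop the multiple-of-$2K$ requirement and use first passage times in this way; with that fix your proposal matches the paper's proof.
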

\begin{proof}
For $K \geq \max\{K_0,K_1\}$ fixed, define a sequence of stopping times $\{t_k\}_{k\geq 0}$ by
\[
 t_0 = 0 \quad\text{and}\quad
t_{k+1} = \inf\{ n > t_k: X_n \geq X_{t_k} + 2K \} = \tau_{X_{t_k} +2K}, 
\]
and let $\{E_k\}_{k\geq 0}$ be the sequence of events 
\[
 E_k = \left\{ X_{t_{k+1}} = X_{t_k} + 2K \text{ and } L_{t_{k+1}}(x) \geq M, \, \forall x \in [X_{t_k} + K, X_{t_k}+2K-1] \right\}. 
\]
If we let $\e_K>0$ be the probability in \eqref{Kintlb}, then it follows that 
\[
 \P_\eta\left( E_k \, | \s( X_i, \, i\leq t_k ) \right) \geq \e_K > 0, \quad \forall k\geq 1,
\]
since conditioned on $\s(X_i, \, i\leq t_k)$ the cookie environment to the right of $X_{t_k}$ is i.i.d.\ with the same distribution as the original distribution on cookie environments to the right of the origin. 
From this we can conclude that 
\[
 \P_\eta\left( \bigcup_{k=0}^{n-1} E_k \right) \geq 1-(1-\e_K)^n. 
\]
Taking $n\ra\infty$ we see that $\P_\eta\left( \bigcup_{k=0}^\infty E_k \right) = 1$.

The same proof works for the measure $\P_\pi$ since the cookie environment is also i.i.d.\ to the right of the origin under the stationary measure $\pi$. 
\end{proof}

For the following lemma, we introduce the notation $\hat\w(I) = \a(\w, M \mathbf{1}_I(\cdot))$ to denote the cookie environment $\w$ with all cookies removed from the set $I \subset \Z$. 
Also, let 
\[
 B_0 = \inf \left\{ K\geq 0: \eta\left( \sum_{z=K+1}^\infty \w_{x,j}(z) = 0 \, \forall x \in \Z, j\geq 1 \right) = 1 \right\}, 
\]
be the uniform upper bound on the maximum jump size of the excited random walk. Note that it may be the case that $B_0 = \infty$. However, in the case of uniformly bounded above jumps ($B_0 < \infty$) we can prove the following. 

\begin{lem}\label{lem:LL}
If $B_0 < \infty$ and $L > K \geq \max\{ B_0, K_0 \}$ then 
then
\begin{equation}\label{trnbL}
 P_{\hat{\w}([-K,-1])}( \mathcal{T}_+ \cap \{ \s_{-L} = \infty \} ) > 0 \quad\Longrightarrow \quad
P_{\hat{\w}([-L,-1])}( \mathcal{T}_+ \cap \{ \s_{-L} = \infty \} ) > 0.
\end{equation}
\end{lem}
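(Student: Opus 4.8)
The plan is to transfer a transient path surviving in $[-K,\infty)$ into one surviving in $[-L,\infty)$ by first running the walk out of a suitable window to the right and then re-assembling the two pieces. Here is how I would set it up. Suppose $P_{\hat{\w}([-K,-1])}(\mathcal{T}_+ \cap \{\s_{-L}=\infty\})>0$. Since on the event $\{\s_{-L}=\infty\}$ the walk never goes below $-L$, and since it is transient to the right, with positive probability it in fact reaches $[L,\infty)$ at some time while never having gone below $-L$. (One also gets that on a further positive-probability sub-event the walk reaches a new running maximum $n$ at which all of $[n-K,n-1]$ have been emptied of cookies, by the argument in Corollary \ref{lem:Lint} applied to the walk started in the environment $\hat{\w}([-K,-1])$; this refinement may or may not be needed depending on how one splices.) Call $H$ the hitting time of $[L,\infty)$; on the event just described $H<\infty$, $X_H\in[L,L+B_0-1]$ because $B_0<\infty$ bounds the overshoot, and $X_m>-L$ for all $m\le H$. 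Crucially, on $[0,H)$ the walk never visits $\{-L,-L+1,\dots,-K-1\}$: it stays in $[-L+1,\infty)$ and only the sites $[-K,-1]$ among the negatives could be touched, but those already had their cookies removed in $\hat{\w}([-K,-1])$ — so the walk's behavior on $[0,H)$ is identical under $\hat{\w}([-K,-1])$ and under $\hat{\w}([-L,-1])$.

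The core step is then a strong-Markov / path-splicing argument. Run the walk in environment $\hat{\w}([-L,-1])$; by the previous paragraph it performs, with positive probability, exactly the same initial excursion, reaching some $x_0=X_H\in[L,L+B_0-1]$ at time $H$ with $\min_{m\le H}X_m>-L$ and with the cookie environment at that moment being $\th^{x_0}\bar\w(H)$. Conditionally on this, the future of the walk is an excited random walk started at $x_0$ in the residual environment; the residual environment agrees with some cookie environment $\le \hat{\w}([-L,-1])$ and, restricted to $[x_0,\infty)$, is just $\th^{x_0}\w$ (unchanged, since the walk never went that far right). I would now invoke the hypothesis once more, but shifted: because $\eta$ is translation covariant in the appropriate sense and because what happens to the left of $x_0$ is irrelevant as long as the walk stays above $-L$, the event that the continuation from $x_0$ is transient to the right and never drops below $-L$ has positive probability under $P_{\hat{\w}([-L,-1])}$ conditioned on the first excursion. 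Concatenating: the full path is transient to the right, and it never goes below $-L$ (the initial excursion stays above $-L$ by construction, and the continuation stays above $-L$ by the conditioning). This yields $P_{\hat{\w}([-L,-1])}(\mathcal{T}_+\cap\{\s_{-L}=\infty\})>0$.

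There is one gap to close carefully: after the first excursion, the walk started at $x_0\ge L$ has a residual environment that differs from $\hat{\w}([-K,-1])$ (shifted) precisely on the interval $[-K,-1]$-translated region and on $[0,x_0)$ where cookies may have been consumed; I need to know that this only \emph{helps}, i.e. that removing more cookies (to the left of the walk) cannot destroy the positivity of $P_{\cdot}(\mathcal{T}_+\cap\{\s_{-L}=\infty\})$. This is where I would lean on monotonicity in the environment via the partial order $\le$ on $\Omega_{M,\mu}^+$: starting from an environment with fewer cookies is, for the purpose of staying to the right, at least as favorable, because every consumed cookie had nonnegative drift. Making ``at least as favorable'' precise for the event $\mathcal{T}_+\cap\{\s_{-L}=\infty\}$ — rather than for a single-step or single-crossing quantity — is the main obstacle; I expect to handle it by a coupling that runs the two walks with the same driving variables $\{W_{y,j}\}$ and checks that the ``more cookies'' walk is never to the right of the ``fewer cookies'' walk until they decouple, or alternatively by conditioning on the walk never entering $[-L,-K-1]$ (which is exactly the event we want) so that the extra removed cookies in $[-K,-1]$ are simply never seen and the two walks are literally equal on that event. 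The latter route is cleanest and is essentially the observation already used in the first paragraph, so I would organize the whole proof around it: the event $\{\s_{-L}=\infty\}$ confines the walk to $[-L+1,\infty)$, on which $\hat{\w}([-K,-1])$ and $\hat{\w}([-L,-1])$ induce identical laws, giving \eqref{trnbL} almost immediately once the overshoot bound $B_0<\infty$ is used to guarantee the walk does reach $[L,\infty)$.
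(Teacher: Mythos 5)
Your ``cleanest route'' rests on the claim that on the event $\{\s_{-L}=\infty\}$ the walk touches only $[-K,-1]$ among the negatives, so that the laws under $\hat\w([-K,-1])$ and $\hat\w([-L,-1])$ coincide on that event. This is false: $\{\s_{-L}=\infty\}$ forces $X_m>-L$ for all $m$, i.e.\ $X_m\geq -L+1$, but the walk is free to visit any site in $[-L+1,-K-1]$, which is exactly the region where the two environments differ (cookies present in $\hat\w([-K,-1])$, removed in $\hat\w([-L,-1])$). So the two laws are \emph{not} identical on $\{\s_{-L}=\infty\}$, and the whole splice collapses --- the initial excursion that has positive probability under $\hat\w([-K,-1])$ can contain cookie-induced steps from $[-L+1,-K-1]$ that have zero $\mu$-probability, hence zero probability under $\hat\w([-L,-1])$. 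The monotonicity fallback is also pointed the wrong way: $\hat\w([-L,-1])$ has \emph{fewer} cookies than $\hat\w([-K,-1])$, and since every cookie has non-negative drift, removing them makes it harder, not easier, for the walk to stay above $-L$; there is no stochastic domination to lean on here, and indeed the lemma is an irreducibility statement, not a monotonicity statement.

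The ingredient your proposal is missing is a path-surgery argument. From a finite path $\mathbf{x}$ realizing the hypothesis (staying in $(-L,\infty)$, reaching some $x_m\geq 0$, and then never returning below $0$), one reroutes each excursion of $\mathbf{x}$ into the negatives: since $K\geq B_0$, the exit step of any such excursion is necessarily a $\mu$-step from a site in $[-B_0,-1]\subset[-K,-1]$ (cookieless under both environments), and since $L-1\geq K\geq K_0$, the $\mu$-walk is irreducible on $(-L,-1]$, so the interior of the excursion can be replaced by a $\mu$-supported path confined to $(-L,-1]$ with the same entry and exit steps. The resulting path $\mathbf{y}$ has positive probability under $\hat\w([-L,-1])$ and unchanged visit counts to $[0,\infty)$, so the tail event ``stay non-negative forever after'' retains its positive conditional probability. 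This irreducibility-based rerouting of negative excursions is the step your proof skips over.
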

\begin{proof}
 If the probability on the left side of \eqref{trnbL} is positive, then there exists a finite path $\mathbf{x} = (x_0,x_1,\ldots,x_m)$ starting at $x_0 = 0$ which stays in $(-L,\infty)$ and ends at $x_m \geq 0$ such that 
\begin{equation}\label{initialpathL}
 P_{\hat{\w}([-K,-1])} \left( X_{[0,m]} = \mathbf{x}, \, \inf_{k> m} X_k \geq 0 \right) > 0. 
\end{equation}
Now the path $\mathbf{x}$ may no longer have positive probability in the cookie environment $\hat\w([-L,-1])$. 
However, since $K \geq B_0$ any excursion to the left of the origin must exit the negative half-line from a site in $[-B_0,-1] \subset [-K,-1]$, and since $L>K\geq K_0$ then every excursion in the path $\mathbf{x}$ to the left of the origin can be replaced by an excursion that stays in $(-L,-1]$, only uses jumps supported by the distribution $\mu$ and 
exits the negative half-line in the same manner as the original excursion.
Therefore, there exists a path 
$\mathbf{y} = (y_0,y_1,\ldots,y_\ell)$ that has positive probability in the cookie environment $\hat\w([-L,-1])$, ends at $y_\ell = x_m$ and which also has the same number of visits to all sites right of the origin. 
From this we see that \eqref{initialpathL} implies that 
\[
  P_{\hat{\w}([-L,-1])} \left( X_{[0,\ell]} = \mathbf{y} , \, \inf_{k> \ell} X_k \geq 0 \right) > 0. 
\]
Clearly, this implies that the probability on the right side of \eqref{trnbL} is also positive. 
\end{proof}

\begin{lem}\label{01lem}
Let $\mathcal{P} = \{ X_n \geq X_0, \, \forall n\geq 0\}$ be the event that the excited random walk never goes to the left of its initial location. 
If Assumptions \ref{asmMmu}--\ref{asmell} hold and 
$\P_{\eta}( \mathcal{T}_+ ) > 0$, then $\P_{\eta}( \mathcal{T}_+ \cap \mathcal{P} ) > 0$. 
\end{lem}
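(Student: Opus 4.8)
The plan is to use Corollary \ref{lem:Lint} to decompose the event $\mathcal{T}_+$ according to the first time the walk reaches a new maximum with no cookies in the $K$ sites immediately behind it, and then argue that from such a configuration there is a uniformly positive conditional probability of never backtracking past that point while still being transient to the right. First, suppose $\P_\eta(\mathcal{T}_+) > 0$. Fix $K \geq \max\{K_0, K_1\}$. By Corollary \ref{lem:Lint}, the stopping time
\[
 \nu = \inf\{ n \geq 0: X_{\tau_n} = n \text{ and } L_{\tau_n}(x) \geq M \ \forall x \in [n-K, n-1] \}
\]
is $\P_\eta$-a.s.\ finite, and so is the associated time $\tau_\nu$. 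Since $\{\mathcal{T}_+\} \subset \{\mathcal{T}_+, \tau_\nu < \infty\}$, there must exist some finite path $\mathbf{x} = (x_0, x_1, \ldots, x_m)$ with $x_0 = 0$, realizing the event $\{X_{[0,m]} = \mathbf{x}, \, \tau_\nu = m\}$ with positive probability and on which $\mathcal{T}_+$ still has positive conditional probability; that is, writing $N = x_m$ (the new maximum reached) and $\w' = \bar\w(m)$ (the remaining cookie environment, which has no cookies on $[N-K, N-1]$ and full cookie stacks on $[N, \infty)$), we have $P_{\w'}^N(\mathcal{T}_+) > 0$ after conditioning on this path.

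Next, the key reduction: on the event $\{\tau_\nu = m, X_{[0,m]} = \mathbf{x}\}$, the walk continues from $X_m = N$ in the environment $\w'$, and I want to show it has positive probability of being transient to the right \emph{and} never returning to $[0,N)$ — in fact never going below $N$ at all would suffice, but we only need it not to drop below its \emph{original} starting point $0$, which is even easier. The idea is to combine the transience-to-the-right event with an event forcing the walk to escape to $+\infty$ before ever dropping $K$ or more units below $N$. Because there are no cookies on $[N-K, N-1]$, once the walk is in that window it behaves like the mean-zero walk $\mu$, which exits a large interval on the far side with probability bounded below; more importantly, since $P_{\w'}^N(\mathcal{T}_+)>0$, there is a finite path $\mathbf{z}$ from $N$ staying in $(N-L, \infty)$ for a suitable $L > K$ after which the walk has positive probability of staying at or above some level $\geq 0$ forever — here I would invoke the same excursion-surgery idea as in Lemma \ref{lem:LL} (replacing each leftward excursion by one supported on $\mu$-jumps inside $(N-L, N-1]$, legitimate since $L > K \geq K_0$ and the cookies on $[N-K,N-1]$ are already gone) to conclude $P_{\hat\w'([N-L,N-1])}^N(\mathcal{T}_+ \cap \{\s_{N-L} = \infty\}) > 0$ for $L$ large. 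Concatenating the path $\mathbf{x}$ (which stays in $[0,N]$ since $\tau_\nu = m$ means $X_k < N$ for $k < m$... or at worst stays $\geq 0$) with a path realizing the no-return-past-$N-L$ transient behavior, and choosing $L$ so that $N - L \geq 0$ is \emph{not} needed — instead we only need the concatenated trajectory to stay $\geq 0$, which holds automatically since the continuation stays $> N - L$ and we can take $L$ with $N-L \geq -1$...

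Actually, the cleanest route avoids chasing $L$: since the continuation from $N$ has positive probability of $\mathcal{T}_+$, and the walk from $N$ is transient to $+\infty$, it visits only finitely many sites in $(-\infty, N)$; with positive probability it visits \emph{none}, i.e.\ $\{X_k \geq N \ \forall k \geq m\}$ has positive probability under $P_{\w'}^N$ — this follows by a geometric argument using the ellipticity in Assumption \ref{asmell} (positive probability the next step after each return to $[N-K,N)$... ) combined with $P_{\w'}^N(\mathcal{T}_+)>0$, or more directly by noting $\mathcal{T}_+$ implies $\inf_{k\geq m} X_k > -\infty$ a.s.\ and then shifting. Then on the path $\mathbf{x}$ followed by $\{X_k \geq N \ \forall k \geq m\}$, we have $X_n \geq 0$ for all $n$ (the path $\mathbf{x}$ can be chosen to stay $\geq 0$ — if not, restart the argument with the \emph{last} time before $\tau_\nu$ that the walk is at a nonnegative new configuration), hence $\mathcal{T}_+ \cap \mathcal{P}$ occurs with positive probability.

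\textbf{Main obstacle.} The delicate point is upgrading "$P_{\w'}^N(\mathcal{T}_+) > 0$" to "$P_{\w'}^N(\mathcal{T}_+ \cap \mathcal{P}) > 0$", i.e.\ ruling out that every transient-right trajectory must first make a deep excursion to the left of the origin. This is where the absence of cookies on $[N-K,N-1]$ (guaranteed by Corollary \ref{lem:Lint}) and the excursion-surgery technique of Lemma \ref{lem:LL} are essential: they let one reroute any leftward excursion so that it never passes below level $0$, at the cost of only finitely-many-$\mu$-steps, while preserving the right-side behavior that gives transience. Making the "choose $\mathbf{x}$ to stay $\geq 0$" step rigorous — so that the path prefix itself does not dip below the origin — requires care; the honest fix is to apply Corollary \ref{lem:Lint} not at the origin but after the walk has (with positive probability) already moved to the right and to track that $\mathcal{P}$ is a property of the whole trajectory relative to $X_0 = 0$.
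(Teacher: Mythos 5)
The core difficulty you correctly identify---upgrading ``$\P_\eta(\mathcal{T}_+)>0$'' to ``the walk never returns past its starting point''---is where the proposal has genuine gaps, and the two most serious ones are exactly the places the paper's proof is structured to handle.

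First, the argument does not make the case distinction on $B_0$ that the paper needs, and as a consequence it misapplies the excursion-surgery technique. Lemma \ref{lem:LL} explicitly requires $B_0 < \infty$: the surgery replaces each leftward excursion by a $\mu$-excursion that stays in $(N-L, N-1]$ and exits at the \emph{same} site, which is only possible because a bounded walk must re-enter $[N,\infty)$ from within $[N-B_0, N-1]$. With unbounded cookie jumps the walk can leave the negative region by jumping from far below $N-K$, and then the removed cookies in $[N-K,N-1]$ never come into play; the surgery breaks. The paper's Case~I ($B_0=\infty$) uses a completely different mechanism---one unbounded jump of size $\geq L$, produced by Lemma \ref{lem:Kint} plus the unboundedness assumption, so the walk \emph{skips over} the window $(-L,0)$ that the path $\mathbf{x}$ occupies. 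Your proposal invokes ``the same excursion-surgery idea as in Lemma \ref{lem:LL}'' without noting that it only applies when $B_0<\infty$, and never offers the unbounded-jump alternative.

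Second, your ``cleanest route'' paragraph is circular. You assert that because $P_{\w'}^N(\mathcal{T}_+)>0$, and because transience means only finitely many visits to $(-\infty,N)$, one has $P_{\w'}^N(X_k \geq N,\, \forall k)>0$. That implication is false in general for a dependent process and is precisely the content of the lemma (shifted to start at $N$). The paper does not argue this way; in Case~II it derives the no-return statement \emph{after} reducing to a modified environment via Lemma \ref{lem:LL}, and in Case~I it produces a concrete never-negative trajectory by concatenating an explicit path with a positive-probability future event. Relatedly, the ``if not, restart the argument with the last nonnegative new configuration'' fix for the prefix path is not a proof: Corollary \ref{lem:Lint} gives a.s.\ existence of a configuration with no cookies in $[n-K,n-1]$, but gives no control ensuring the path up to that time stays $\geq 0$. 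The paper instead uses Lemma \ref{lem:Kint}, which produces an explicit path confined to $[0,2K]$ and ending at $2K$, and that path is the nonnegative prefix. These are the two ideas (the $B_0$ dichotomy and the explicit Lemma~\ref{lem:Kint} prefix) that the proposal is missing, and without them the argument as written does not close.
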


\begin{rem}
 As noted in the proof of Proposition \ref{limsupprop}, due to Assumption \ref{asmMmu} if the excited random walk stays non-negative then it necessarily is transient to the right. That is $\P_\eta( \mathcal{T}_+ \cap \mathcal{P} ) = \P_\eta(\mathcal{P})$. 
\end{rem}

\begin{proof}
We divide the proof into two cases, depending on whether the jumps are uniformly bounded above or not. 

\noindent\textbf{Case I: Unbounded jumps ($B_0 = \infty$).}
If the event $\mathcal{T}_+$ occurs, then at some point the walk is always to the right of the origin. Therefore, there exists a finite path $\mathbf{x} = (x_0,x_1,\ldots,x_m)$ with $x_0=0$ and $x_m \geq 0$ such that 
\begin{equation}\label{initialpath}
 \P_\eta\left( X_k = x_k,\, k\leq m, \text{ and } \inf_{k>m} X_k \geq 0 \right) > 0. 
\end{equation}
Let $L<\infty$ be such that the finite path $\mathbf{x}$ is contained in $(-L,\infty)$. 
Since the jumps are unbounded, there exists a $j\leq M$ such that $\eta( \sum_{z \geq L} \w_{0,j}(z) > 0 ) > 0$. 
It follows from Lemma \ref{lem:Kint} above that for $K \geq \max{K_0,K_1}$ there exists a path 
$\mathbf{y} = (y_0,y_1,\ldots,y_\ell)$ starting at $y_0=0$ with $y_k \in [0,2K]$ for $k<\ell$ and ending at $y_\ell \geq 2K+L$ with $ \P_\eta( X_k = y_k, \, \forall k \leq \ell ) > 0$. 
Indeed, after forcing the event in Lemma \ref{lem:Kint} to occur, with positive probability the random walk then performs $j-1$ loops from $2K$ back to $2K$ and contained in $[K,2K]$ and then on the $j$-th visit to $2K$ jumps at least $L$ to the right. 
Then, by appending the path $\mathbf{y}$ to the beginning of the path $\mathbf{x}$ and using 
the assumption that the cookie environment is i.i.d.\ (Assumption \ref{asmiid})
we get that 
\begin{align*}
 \P_\eta(\mathcal{P} \cap \mathcal{T}_+) 
&\geq \P_\eta\left( X_{k} = \begin{cases} y_k & k\leq \ell \\ y_\ell + x_{k-\ell} & \ell < k \leq \ell + m, \end{cases} \text{ and }  \inf_{k > \ell + m } X_k \geq y_\ell \right) \\
&=  \P_\eta( X_k = y_k, \, \forall k \leq \ell ) \P_\eta\left( X_k = x_k,\, k\leq m, \text{ and } \inf_{k>m} X_k \geq 0 \right) > 0. 
\end{align*}

\noindent\textbf{Case II: Uniformly bounded jumps ($B_0 < \infty$).}
For $K<\infty$ let  $\rho_K$ be a stopping time defined by 
\[
 \rho_K = \inf\left\{ \tau_n: X_{\tau_n} = n \text{ and }  L_{\tau_n}(x) \geq M, \, \forall x \in [n-K,n-1] \right\}.
\]
Note that Lemma \ref{lem:Lint} implies that $\P_\eta( \rho_K < \infty) = 1$ for any $K\geq \max\{K_0,K_1\}$. 
Therefore, if $\P_\eta(\mathcal{T}_+) > 0$ then there exists an $L>K$ such that 
\[
 \P_\eta\left( \mathcal{T}_+ \cap \left\{ \inf_{k > \rho_K} X_k >  X_{\rho_K} - L \right\} \right) > 0,
\]
and then it follows from Lemma \ref{lem:LL} that 
\begin{equation}\label{rhoLnbL}
 \P_\eta\left( \mathcal{T}_+ \cap \left\{ \inf_{k > \rho_{L}} X_k >  X_{\rho_{L}} - L \right\} \right) > 0.
\end{equation}
Thus, the excited random walk with all cookies removed from $[-L,-1]$ never reaches $-L$ with positive probability. 
Since Lemma \ref{lem:Kint} implies that with positive probability there is a non-negative path that ends at $X_{\tau_{2L}} = 2L$ with all cookies used in $[L,2L-1]$, we can thus conclude that after this happens there is then a positive probability of never going back below $L$ and thus $\P_\eta( \mathcal{P} \cap \mathcal{T}_+ ) > 0$. 
\end{proof}

A corollary of the above lemma is that the same result holds for the measure $\P_\pi$. 
\begin{cor}\label{01lempi}
Let Assumptions \ref{asmMmu}--\ref{asmell} hold. 
If $\pi$ is the stationary measure for the cookie environment process from Corollary \ref{cor:invariant} and $\P_{\pi}( \mathcal{T}_+ ) > 0$, then $\P_{\pi}( \mathcal{T}_+ \cap \mathcal{P} ) > 0$. 
\end{cor}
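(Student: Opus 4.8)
The plan is to reproduce the proof of Lemma~\ref{01lem} almost verbatim, with the spatial i.i.d.\ hypothesis (Assumption~\ref{asmiid}) replaced by the distributional properties of $\pi$ recorded in Corollary~\ref{cor:invariant}\ref{piprop1}. The only genuinely new feature is that under $\P_\pi$ the walk starts at the (random, $\Z_+$-valued) site $X_0$ rather than at $0$, and the environment strictly to the left of $X_0$ need not be $\eta$-distributed; everything else is bookkeeping.

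First I would record three facts valid under $\P_\pi$. (i) Since $\pi$ is concentrated on $\Psi$, $\limsup_{n\to\infty}X_n=\infty$, $\P_\pi$-a.s. (ii) The estimate in Lemma~\ref{infsupR} uses only the bound $\sum_{j\le M}\sum_{y\ge B}\P_\eta(W_{y,j}+y<0)<\infty$, which involves the environment solely at sites $y\ge B\ge 0$; by Corollary~\ref{cor:invariant}\ref{piprop1} the same bound holds with $\P_\pi$ in place of $\P_\eta$, so together with (i) it yields $\P_\pi(\mathcal{R}\cup\mathcal{T}_+)=1$, and hence (as in the remark following Lemma~\ref{01lem}) $\P_\pi(\mathcal{P})=\P_\pi(\mathcal{P}\cap\mathcal{T}_+)$, so that it suffices to produce an event of positive $\P_\pi$-probability on which $X_n\ge X_0$ for every $n$. (iii) Corollary~\ref{lem:Lint} already asserts its conclusion for $\P_\pi$, so the stopping times $\rho_K$ used in Case~II of Lemma~\ref{01lem} are $\P_\pi$-a.s.\ finite.

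Next I would condition on the starting point. By Corollary~\ref{cor:invariant}\ref{piprop1}, under $\P_\pi$ the environment $\{\w_x\}_{x\ge 0}$ is i.i.d.\ with the same one-site law as under $\eta$ and is independent of $X_0$ and of $\{\w_x\}_{x<0}$; hence conditionally on $\{X_0=x_0\}$ the environment at sites $\ge x_0$ is i.i.d.\ with the $\eta$-marginal, and Lemma~\ref{lem:Kint} applies, after translation by $x_0$, to this conditioned environment. Since $\P_\pi(\mathcal{T}_+)>0$, a countable decomposition of $\mathcal{T}_+\cap\{X_0=x_0\}$ according to the last time the walk lies below $x_0$ produces some $x_0$ with $\pi(X_0=x_0)>0$, some $m$, and a finite path $\mathbf{x}=(x_0,\dots,x_m)$ with
\[
\P_\pi\bigl(X_0=x_0,\ X_k=x_k\ (0\le k\le m),\ X_k\ge x_0\ \forall\,k>m\bigr)>0 .
\]
Write $c=\min_{0\le k\le m}x_k$.

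Finally I would run the two cases of Lemma~\ref{01lem} with $x_0$ in place of the origin. In Case~I ($B_0=\infty$), Lemma~\ref{lem:Kint}, which concerns only the fresh environment immediately to the right of a site, lets one build, with positive $\P_\pi$-probability, a path $\mathbf{y}$ from $x_0$ climbing to some height $h\ge 2x_0-c$ while staying strictly below the range of the copy of $\mathbf{x}$ shifted up by $h-x_0$; splicing that shifted copy onto the end of $\mathbf{y}$ and using the i.i.d.\ structure of the environment at sites $\ge x_0$ gives an event of positive $\P_\pi$-probability on which $X_n\ge x_0=X_0$ for all $n$, whence $\mathcal{P}\cap\mathcal{T}_+$ by fact (ii). In Case~II ($B_0<\infty$) one argues exactly as in Lemma~\ref{01lem}, using $\rho_K$, Corollary~\ref{lem:Lint} for $\P_\pi$, and Lemma~\ref{lem:LL} (a purely environment-wise statement, so indifferent to the underlying measure), to obtain with positive probability a time after which the walk never drops by more than $L$, and then, via Lemma~\ref{lem:Kint} again, a nonnegative climb exhausting all cookies in the block of length $L$ just below the new maximum; after this the walk stays above that level with positive probability, so $\P_\pi(\mathcal{P}\cap\mathcal{T}_+)>0$. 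The step I expect to require the most care is the verification that every ingredient of the proof of Lemma~\ref{01lem}---Lemma~\ref{lem:Kint}, Corollary~\ref{lem:Lint}, Lemma~\ref{lem:LL}, and facts (i)--(iii)---refers only to the environment to the right of the walk's starting point, so that Corollary~\ref{cor:invariant}\ref{piprop1} can stand in for Assumption~\ref{asmiid}; once that is checked, the splicing construction is identical to the one in Lemma~\ref{01lem}.
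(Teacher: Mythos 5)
There is a genuine gap in your Case~I, and the same problem infects Case~II. The path $\mathbf{x}=(x_0,\dots,x_m)$ you obtain by decomposing $\mathcal{T}_+\cap\{X_0=x_0\}$ according to the last visit below $x_0$ can dip into the negative integers, and on $\{x<0\}$ the cookie environment under $\pi$ does \emph{not} have the $\eta$-law: Corollary~\ref{cor:invariant}\ref{piprop1} controls only $\{\w_x\}_{x\geq 0}$, and the $\pi$-law at negative sites is only known to be a weak limit of ``used'' environments. When you slide $\mathbf{x}$ upward so that its range lies in $[x_0,\infty)$ and splice it after $\mathbf{y}$, the probability of the spliced path is evaluated against the $\eta$-marginal of $\{\w_x\}_{x\ge x_0}$; but the only thing you know is that the \emph{original} path has positive probability against the $\pi$-conditional law, and for the steps of $\mathbf{x}$ taken from sites $<0$ these two laws may disagree (e.g.\ $\pi$ can put mass on step probabilities at a negative site that have zero $\eta$-probability at the corresponding cookie index). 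So positivity under $\P_\pi$ of the original event does not transfer to positivity under $\P_\pi$ of the spliced event, and ``using the i.i.d.\ structure of the environment at sites $\geq x_0$'' is not enough to close the argument. Your Case~II inherits the same difficulty, since the walk up to $\rho_K$ (and the excursion-replacement mechanism behind Lemma~\ref{lem:LL}) can exercise the unknown part of the environment at sites $<0$.

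The paper avoids this by \emph{not} re-running the proof of Lemma~\ref{01lem} under $\P_\pi$. It first observes that $\P_\pi(\mathcal{T}_+\cap\mathcal{P})=\P_\eta(\mathcal{T}_+\cap\mathcal{P})$, because both quantities depend only on $\{\w_x\}_{x\geq 0}$, whose law is the same under $\pi$ and $\eta$ (and independent of $X_0$). Hence it suffices to show $\P_\eta(\mathcal{T}_+)>0$, after which Lemma~\ref{01lem} for $\P_\eta$ finishes. The step your proposal is missing is precisely the passage from $\P_\pi(\mathcal{T}_+)>0$ to $\P_\eta(\mathcal{T}_+)>0$. The paper achieves this by introducing $R(\w,x)=\int P^x_{(\w,\w')}(\mathcal{T}_+)\,\eta(d\w')$ (the transience probability averaged over a fresh $\eta$-environment to the right), proving that $R$ is lower semi-continuous via a path decomposition at the last visit to the negative half-line, and then combining $\pi=\lim_m\mu_{n_m}$ with the identity $E_{\mu_n}[R(\w,X_0)]=\P_\eta(\mathcal{T}_+)$ to conclude $\P_\pi(\mathcal{T}_+)\le\P_\eta(\mathcal{T}_+)$. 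This semicontinuity/weak-convergence inequality is the genuinely new ingredient here; once it is in hand, no re-proof of Lemma~\ref{01lem} for $\P_\pi$ is needed, and the difficulty with the unknown $\pi$-law at negative sites never arises.
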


\begin{proof}
Since $\pi$ and $\eta$ have the same distribution on $\{\w_x \}_{x\geq 0}$, then $\P_\eta(\mathcal{T}_+ \cap \mathcal{P}) = \P_\pi(\mathcal{T}_+ \cap \mathcal{P})$. 
Thus, by Lemma \ref{01lem} it is enough to show that $\P_\pi(\mathcal{T}_+) \leq \P_\eta(\mathcal{T}_+)$. 

For two cookie environments let $(\w,\w')$ be the cookie environment which agrees with $\w$ to the left of the origin and $\w'$ to the right of the origin. 
That is, $(\w,\w')_{x,j} = \w_{x,j} \ind{x<0} + \w'_{x,j} \ind{x\geq 0}$. 
For $(\w,x) \in \Psi$ define
\[
 R(\w,x) = \int P_{(\w,\w')}^x( \mathcal{T}_+ ) \, \eta(d\w'). 
\]
Thus $R(\w,x)$ is the probability the walk started at $x$ is transient to the right conditioned on the realization of the cookie environment to the left of the origin. 
We claim that the mapping $(\w,x) \ra R(\w,x)$ is lower semi-continuous. 
To see this, note that if the walk is transient to the right then there is a last visit to the negative integers by the walk. Therefore, if $\Pi_x$ denotes the collection of paths $\mathbf{x} = (x_0,x_1,\ldots,x_m)$ of finite length with $x_0 = x$, $x_{m-1}<0$ and $x_m \geq 0$, then 
\begin{equation}\label{Rsum}
 R(\w,x) = 
\P_\eta^x( X_n \geq 0, \, \forall n\geq 0) + 
\sum_{ \mathbf{x} \in \Pi_x } \int P_{(\w,\w')}^x\left( X_k = x_k, \, k\leq m, \, \inf_{k>m} X_k \geq 0 \right)\, \eta(d\w'). 
\end{equation}
For each fixed path $\mathbf{x} \in \Pi_x$, the probabilities inside the integral on the right only depend on finitely many of the coordinates $\w_{y,j}$ from the cookie environment $\w$ and thus it is easy to see that the mapping $\w \mapsto \int P_{(\w,\w')}^x\left( X_k = x_k, \, k\leq m, \, \inf_{k>m} X_k \geq 1 \right)\, \eta(d\w')$ is continuous for any fixed $\mathbf{x} \in \Pi_x$. Then it follows from \eqref{Rsum} that $R(\w,x)$ is lower semi-continuous, and since the measure $\pi$ was constructed as the weak limit of the measures $\mu_{n_m}$ we can thus conclude that 
\begin{equation}\label{PpiR}
\P_\pi(\mathcal{T}_+) = E_\pi\left[ R(\w,X_0) \right]
\leq \liminf_{m\ra\infty} E_{\mu_{n_m}} \left[ R(\w,X_0) \right].
\end{equation}
However, for any $n\geq 1$
\begin{align}
 E_{\mu_n}\left[  R(\w,X_0) \right]
&= \frac{1}{n} \sum_{k=1}^n \E_\eta\left[ R(\th^k \bar{\w}(\tau_k), X_{\tau_k} -k) \right] \nonumber \\
&= \frac{1}{n} \sum_{k=1}^n \E_\eta\left[ \P_\eta\left( \mathcal{T}_+ \, | \, \s(\w_x, \, X_i: \, x < k, \, i\leq \tau_k \right) \right] \nonumber \\
&= \P_\eta( \mathcal{T}_+ ). \label{EmuR}
\end{align}
Therefore, it follows from \eqref{PpiR} and \eqref{EmuR} that $\P_\pi(\mathcal{T}_+) \leq \P_\eta(\mathcal{T}_+)$.
\end{proof}

We are now ready to give the proof of the main result of this section. 
\begin{proof}[Proof of Proposition \ref{01lemcor}]
Let $\b = \P_\eta( \mathcal{P} ) = \P_\eta( \mathcal{P} \cap \mathcal{T}_+ )$. 
If $\P_\eta( \mathcal{T}_+ ) > 0$, then Lemma \ref{01lem} implies that $\b > 0$. 
The proof of the proposition will be finished by showing that $\b>0$ in turn implies that $\P_\eta( \mathcal{T}_+ ) = 1$. 
To this end, let $0 = N_0 < B_1 \leq N_1 \leq B_2 \leq N_2 \leq B_3 \leq \ldots$ be a sequence of stopping times for the random walk defined as follows. 
\begin{equation}\label{BjNjdef}
 B_j = \inf \{ n > N_{j-1}: X_n < X_{N_{j-1}} \}
\quad \text{and}\quad
 N_j = \inf \left\{ n > B_j: X_n > \max_{k\leq B_j} X_k \right\}, \quad j \geq 1.  
\end{equation}
(We will use the convention that $B_j = \infty$ implies that $N_k = B_k = \infty$ for all $k> j$.)
Since $\P_\eta( \limsup_{n\ra\infty} X_n = +\infty) = 1$, we can conclude that 
\begin{equation}\label{PNB}
 \P_\eta( N_j < \infty | \, B_j < \infty) = 1, \qquad j\geq 1. 
\end{equation}
Also, at a stopping time $N_{j-1}$ the random walk is at a new maximum and so the environment to the right of the current location again has the same distribution as the environment to the right of the origin under the measure $\eta$. Thus, 
\begin{equation}\label{PBN}
 \P_\eta( B_j = \infty | \, N_{j-1} < \infty) = \b, \qquad j\geq 1. 
\end{equation}
From these two facts we can conclude that $\P_\eta(B_j < \infty) = (1-\b)^j$ for all $j\geq 1$. Therefore, if $\b > 0$ it follows that $B_j = \infty$ for some $j\geq 1$ almost surely, 
and thus that $\liminf_{n\ra\infty} X_n \geq 0$ with probability 1. From Theorem \ref{thm:ergrt}\ref{K01law} we can conclude
that $\b>0$ implies that $\P_\eta( \mathcal{T}_+) = 1$.

To prove the conclusion of the Proposition for $\P_\pi$, we first need to show that 
\begin{equation}\label{PpiTR}
 \P_\pi(\mathcal{T}_+) = 1-\P_\pi(\mathcal{R})
\end{equation}
To this end, first note that since $\P_\pi(\limsup_{n\ra\infty} X_n = \infty) = 1$ it follows that 
\[
 \P_\pi\left( \liminf_{n\ra\infty} X_n < \infty \right) = 1-\P_\pi(\mathcal{T}_+). 
\]
Since the cookie environment to the right of the origin has the same distribution under $\pi$ and $\eta$, the same argument in the proof of Lemma \ref{infsupR} shows that
\[
 \P_\pi\left( \liminf_{n\ra\infty} X_n < \infty \right) = \P_\pi\left( \sum_{n\geq 0} \ind{X_n = x} = \infty, \, \forall x\geq 0 \right).
\]
That is, if the walk is not transient to the right under the measure $\P_\pi$ then the walk visits every site to the right of the origin infinitely often. 
Finally, note that since $\Omega_{M,\mu}:= \left( M_1(\Z)^M \times \{\mu\}^{\N} \right)^{\Z}$ is a closed subset of $\Omega$,\footnote{Note that $\Omega_{M,\mu}$ differs from $\Omega_{M,\mu}^+$ in that we no longer require the first $M$ cookies at each site to have non-negative drift.} it follows from Assumption \ref{asmMmu} that the stationary distribution $\pi$ is concentrated on $\Omega_{M,\mu} \times \Z_+$. For all cookie environments in $\Omega_{M,\mu}$ it is easy to see that if any site is visited infinitely often then all sites are visited infinitely often. 
This completes the proof of \eqref{PpiTR}. 	

Having proved \eqref{PpiTR}, it follows from Corollary \ref{01lempi} we need only to show that $\b > 0$ implies $\P_\pi(\mathcal{T}_+) = 1$. 
Since \eqref{PNB} and \eqref{PBN} also hold for the measure $\P_\pi$, the same argument as above shows that $\b>0$ implies $\P_\pi(\liminf_{n\ra\infty} X_n \geq 0) = 1$ which by \eqref{PpiTR} implies 
that $\P_\pi(\mathcal{T}_+) = 1$. 
\end{proof}

\section{A sharp criterion for recurrence/transience}\label{sec:sharp}

In this section we give the proof of the sharp criterion for recurrence/transience from Theorem \ref{thm:rt}. 
As a first step we prove the following lemma. 

\begin{lem}\label{EpiDinf1}
 If $\P_\pi(\mathcal{T}_+) = 1$, then $\E_\pi[ D_\infty^0 ] =1 $. 
\end{lem}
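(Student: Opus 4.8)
The plan is to upgrade the inequality $\E_\pi[D_\infty^0]\le 1$ from Lemma \ref{EpiDinf} to an equality under the transience assumption. The key observation is that when $\P_\pi(\mathcal{T}_+)=1$, the walk exits $[0,n)$ to the right (i.e.\ $\tau_n<\s_{-1}$ eventually, and more relevantly the walk is at a new maximum infinitely often), and the ``lost drift'' terms that made the martingale bound in Lemma \ref{EpiDinf} an inequality rather than an equality become negligible. Concretely, recall $M_n = X_n - D_n$ is a quenched martingale; applying optional stopping at $\tau_n$ and using stationarity of $\pi$ under the cookie environment process should give a matching \emph{lower} bound $\E_\pi[D_{\tau_n}^+]\ge n - o(n)$, which combined with \eqref{Dtaupluslb} forces $\E_\pi[D_{\tau_k}^0]\to 1$.

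First I would set up the lower bound on $\E_\pi[X_{\tau_n}]$. Since $M_n$ is a martingale and $\tau_n<\infty$ $\P_\pi$-a.s.\ (because $\limsup X_n=\infty$ under $\pi$ by Proposition \ref{limsupprop}, whose conclusion transfers to $\pi$ since $\pi$ and $\eta$ agree on $\{\w_x\}_{x\ge0}$), one expects $\E_\pi[X_{\tau_n}] = \E_\pi[X_0] + \E_\pi[D_{\tau_n}]$; here $\E_\pi[X_0]$ is a finite constant (bounded by the analogue of the Step 3 estimate in Lemma \ref{lem:tightness}, which shows $X_0$ has a uniformly summable tail). Care is needed since optional stopping at the unbounded time $\tau_n$ requires a uniform-integrability-type justification; I would either truncate at $\tau_n\wedge\s_{-m}$, apply \eqref{EXED}-style bounds, and let $m\to\infty$ using $\s_{-m}\to\infty$ (a consequence of transience), or invoke the martingale $M_k^{(n)}$ from the proof of Lemma \ref{lem:mgpos} together with the reverse inequality obtained by noting that under $\mathcal{T}_+$ the left-excursion corrections in that proof are a.s.\ eventually absent. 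On the other side, the overshoot $X_{\tau_n}-n$ is controlled exactly as in Step 3 of Lemma \ref{lem:tightness}: $\E_\pi[(X_{\tau_n}-n)_+]\le \sum_{j=1}^M\sum_{k=1}^n \E_\eta[(W_{0,j}-k)_+] = o(n)$. So $\E_\pi[X_{\tau_n}] = n + o(n)$, hence $\E_\pi[D_{\tau_n}]\ge n - C - o(n)$ for a constant $C$.

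Next I would localize this to the total drift used to the \emph{right} of the origin. We have $D_{\tau_n} = D_{\tau_n}^+ + D_{\tau_n}^-$ where $D_{\tau_n}^- = \sum_{x<0}D_{\tau_n}^x$; the latter is dominated by $\sum_{x<0}\bar d(\w_x)$ restricted to sites visited before $\tau_n$, and under transience only finitely many negative sites are ever visited, so $\E_\pi[D_{\tau_n}^-]$ is bounded uniformly in $n$ (using Assumption \ref{asmic} and the argument from Lemma \ref{infsupR} bounding the number of crossings of $[0,B)$; more carefully, one controls $\E_\pi[\sum_{x<0}\bar d(\w_x)\ind{\s_x<\infty}]$). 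Thus $\E_\pi[D_{\tau_n}^+]\ge n - o(n)$, which together with the reverse inequality $\E_\pi[D_{\tau_n}^+]\le n + o(n)$ from \eqref{Dtaupluslb} and the stationarity identity $\E_\pi[D_{\tau_n}^+] = \sum_{x=0}^{n-1}\E_\pi[D_{\tau_n}^x] \ge \sum_{x=0}^{n-1}\E_\pi[D_{\tau_{x+?}}^0]$ — more precisely, from $\E_\pi[D_{\tau_n}^x]=\E_\pi[D^0_{\tau_{n-x}}]$ and monotonicity $D^0_{\tau_k}\uparrow D^0_\infty$ — pins $\E_\pi[D^0_{\tau_k}]$ between quantities both converging to $1$, giving $\E_\pi[D_\infty^0]=1$ by monotone convergence.

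The main obstacle I anticipate is the lower bound on $\E_\pi[D_{\tau_n}]$, specifically justifying optional stopping / the reverse martingale inequality at the unbounded stopping time $\tau_n$ when jumps are unbounded: the estimate \eqref{jumpsize} only gives an a.s.\ finite (not uniformly bounded) conditional step size, so I cannot directly invoke the optional stopping theorem and must instead run the truncation-in-$m$ argument and carefully show the error from replacing left-excursions (as in Lemma \ref{lem:mgpos}) vanishes in the limit. Once that technical point is handled, the rest is bookkeeping with Birkhoff's theorem and monotone convergence as in the proof of Lemma \ref{EpiDinf}.
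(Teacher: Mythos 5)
Your overall strategy (martingale decomposition at the exit of a finite interval, stationarity of $\pi$ to turn an estimate on $D_{\tau_n}^+$ into one on $D_\infty^0$, combine with the upper bound from Lemma~\ref{EpiDinf}) is in the right spirit, and the Ces\`aro/monotone-convergence bookkeeping at the end is fine. The gap is in the one step you yourself flag as the ``main obstacle'': you never actually establish the lower bound $\E_\pi[D_{\tau_n}] \geq n - o(n)$, and the two routes you sketch for it both have real holes. If you truncate at $\tau_n\wedge\s_{-m}$ and send $m\to\infty$ with $n$ fixed, you need $\E_\pi\bigl[|X_{\s_{-m}}|\ind{\s_{-m}<\tau_n}\bigr]\to 0$; transience gives $\ind{\s_{-m}<\tau_n}\to 0$ almost surely, but $|X_{\s_{-m}}|\geq m$ grows at the same time, and with unbounded jumps there is no dominating bound, so this requires a quantitative estimate you don't have. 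If instead you try to bound $\E_\pi[D_{\tau_n}^-]$ uniformly in $n$, you need $\E_\pi\bigl[\sum_{x<0}\bar d(\w_x)\ind{\s_x<\infty}\bigr]<\infty$; ``only finitely many negative sites are visited a.s.'' is a statement about the event, not its expectation, and neither the decay rate of $\P_\pi(\s_x<\infty)$ nor the law of $\{\w_x\}_{x<0}$ under $\pi$ is controlled well enough in the paper to make this work (Corollary~\ref{cor:invariant}\ref{piprop1} only pins down the environment to the \emph{right} of the origin).

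The paper avoids both difficulties with the opposite order of limits. Fixing $m$, it applies optional stopping at the \emph{bounded} exit time $\tau_n\wedge\s_{-m}$, bounds $D_{\tau_n\wedge\s_{-m}}\leq \sum_{y=-m+1}^{-1}\bar d(\w_y)+\sum_{y=0}^{n-1}D_\infty^y$, and then uses Corollary~\ref{cor:invariant}(ii), $\E_\pi[\bar d(\w_y)]\leq\d$, to control the left-hand contribution by $(m-1)\d$ — a fixed constant. After taking $\E_\pi$, stationarity gives $\sum_{y=0}^{n-1}\E_\pi[D_\infty^y]=n\E_\pi[D_\infty^0]$, yielding $n\P_\pi(\tau_n<\s_{-m})\leq \E_\pi[X_0\ind{X_0<n}]+(m-1)\d+n\E_\pi[D_\infty^0]$. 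Dividing by $n$ and sending $n\to\infty$ (with $m$ fixed) kills the $m$-dependent constant, the undershoot, and the overshoot all at once, giving $\P_\pi(\s_{-m}=\infty)\leq\E_\pi[D_\infty^0]$; only then does $m\to\infty$ and transience enter, giving $1\leq\E_\pi[D_\infty^0]$. No uniform integrability in $m$, no control on $D^-$, and no splitting of $D$ into $D^\pm$ is needed. To salvage your write-up you should adopt this order of limits and invoke Corollary~\ref{cor:invariant}(ii) for the left-of-origin drift, rather than trying to bound $\E_\pi[D_{\tau_n}^-]$ or to pass $m\to\infty$ first.
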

\begin{proof}
Since Lemma \ref{EpiDinf} implies that $\E_\pi[D_\infty^0] \leq 1$, we only need to show that $\E_\pi[D_\infty^0] \geq 1$. 
 First of all, note that for any $n,m\geq 1$ and $-m<x<n$ that 
 \begin{align*}
  x &= E_\w^x[ X_{\tau_n \wedge \s_{-m}} ] - E_\w^x[ D_{\tau_n \wedge \s_{-m}} ] \\
  &\geq n P_\w^x( \tau_n < \s_{-m} ) - \sum_{y=-m+1}^{-1} \bar{d}(\w_y) - \sum_{y=0}^{n-1} E_\w^x[ D_{\infty}^y ]. 
 \end{align*}
Taking expectations of $(\w,x)$ with respect to the stationary measure $\pi$ we obtain that 
\begin{align*}
 n \P_\pi( \tau_n < \s_{-m}) \leq \E_\pi[X_0 \ind{X_0 < n}] + (m-1)\d + n \E_\pi[ D_\infty^0].
\end{align*}
Therefore, dividing by $n$ and letting $n\ra\infty$ with $m$ fixed we can conclude that 
\[
 \P_\pi( \s_{-m} = \infty) = \lim_{n\ra\infty} \P_\pi( \tau_n < \s_{-m} ) \leq \E_\pi[D_\infty^0]. 
\]
Since we are assuming that $\P_\pi(\mathcal{T}_+) = 1$, then $\lim_{m\ra\infty} \P_\pi(\s_{-m} = \infty) = 1$. 
\end{proof}

Next, we prove that the criterion for recurrence/transience holds for the stationary measure $\P_\pi$ in place of $\P_\eta$. 
\begin{prop}\label{Ppirt}
 Let Assumptions \ref{asmMmu}--\ref{asmell} hold, and let $\pi$ be the stationary measure for the cookie environment process from Corollary \ref{cor:invariant}. 
\begin{enumerate}
 \item If $\d>1$ then $\P_\pi(\mathcal{T}_+) = 1$. 
 \item If $\d\leq 1$ then $\P_\pi(\mathcal{R}) = 1$. 
\end{enumerate}
\end{prop}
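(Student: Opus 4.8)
The plan is to combine the zero--one law of Proposition~\ref{01lemcor} with the two drift inequalities of Lemmas~\ref{EpiDinf} and~\ref{EpiDinf1}, together with two elementary facts: $\E_\pi[\bar d(\w_0)] = \d$ (because, by Corollary~\ref{cor:invariant}, the cookie environment to the right of the origin, in particular $\w_0$, has the same law under $\pi$ as under $\eta$), and the pointwise bound $D_\infty^0 \le \bar d(\w_0)$ (since all $M$ cookies at a site carry non-negative drift). For part~(i), assume $\d>1$. By Proposition~\ref{01lemcor} it suffices to exclude $\P_\pi(\mathcal R)=1$; if the walk were recurrent, site $0$ would be visited infinitely often, all $M$ of its cookies would be consumed, so $D_\infty^0 = \bar d(\w_0)$ and hence $\E_\pi[D_\infty^0] = \E_\pi[\bar d(\w_0)] = \d > 1$, contradicting Lemma~\ref{EpiDinf}.

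For part~(ii), assume $\d \le 1$ and, for contradiction (again via Proposition~\ref{01lemcor}), that $\P_\pi(\mathcal T_+)=1$. Lemma~\ref{EpiDinf1} then gives $\E_\pi[D_\infty^0]=1$, so $1 = \E_\pi[D_\infty^0] \le \E_\pi[\bar d(\w_0)] = \d \le 1$. When $\d<1$ this is already a contradiction, so that case is done. When $\d=1$, equality holds throughout, forcing $D_\infty^0 = \bar d(\w_0)$, $\P_\pi$-a.s.; that is, the walk almost surely consumes \emph{all} the drift at the origin, and the remaining (substantial) step is to contradict this. I would split into two cases. \emph{Case $\P_\pi(X_0\ge 1)>0$.} Using Corollary~\ref{01lempi} and spatial stationarity one has $\P_\pi(\mathcal P\mid X_0=x) = \b := \P_\eta(\mathcal P)$ for every $x\ge 0$, with $\b>0$, where $\mathcal P$ is the event of Lemma~\ref{01lem}; hence $\P_\pi(\mathcal P \cap \{X_0\ge 1\}) = \b\,\P_\pi(X_0\ge 1)>0$. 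On this event the walk never visits $0$, so $D_\infty^0=0$; since the event is independent of $\w_0$ under $\pi$ and $\P_\eta(\bar d(\w_0)>0)>0$ (because $\d=1$), with positive probability $D_\infty^0=0<\bar d(\w_0)$, a contradiction.

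\emph{Case $\P_\pi(X_0=0)=1$.} Stationarity of $\zeta_n$ forces $X_{\tau_1}=1$, $\P_\pi$-a.s., whence $X_1\le 1$ a.s.; since $X_0=0$ this means $\w_{0,1}$ is supported in $(-\infty,1]$ for $\eta$-a.e.\ environment, so $d(\w_{0,1})\le 1$ a.s. If $\E_\eta[d(\w_{0,1})]=1$, then $d(\w_{0,1})=1$ a.s., so $\w_{0,1}=\d_1$ a.s., contradicting the hypothesis $E_\eta[\prod_{j=1}^M\sum_{z\le 0}\w_{0,j}(z)]>0$ of Assumption~\ref{asmell}. Otherwise $\E_\eta[\sum_{j=2}^M d(\w_{0,j})] = 1 - \E_\eta[d(\w_{0,1})]>0$, so $\sum_{j\ge 2} d(\w_{0,j})>0$ with positive $\eta$-probability, and on that event $\w_{0,1}(\{1\})=\w_{0,1}([1,\infty))>0$ by Assumption~\ref{asmell}. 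Consequently, with positive $\P_\pi$-probability the walk starts at $0$, steps to $1$, and then stays $\ge 1$ forever (the last with probability $\ge\b$), so it visits $0$ exactly once and $D_\infty^0 = d(\w_{0,1})<\bar d(\w_0)$, again a contradiction. Since the two cases are exhaustive, $\P_\pi(\mathcal T_+)=1$ is impossible, and $\P_\pi(\mathcal R)=1$ by Proposition~\ref{01lemcor}.

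The bookkeeping with the zero--one law and Lemmas~\ref{EpiDinf} and~\ref{EpiDinf1} is routine. The main obstacle is the boundary case $\d=1$ of part~(ii): one must rule out a transient walk that consumes all the drift at the origin, and this is exactly the place where Assumption~\ref{asmell} --- positive probability of a rightward first step, and positive probability that all of a site's cookies push to the left --- is essential. The genuinely delicate point inside the case analysis is isolating the ``overshoot-free'' situation $\P_\pi(X_0=0)=1$, in which the jump distributions governing the first visit to the origin are forced to be supported in $(-\infty,1]$, and then deciding whether the drift at a site is concentrated in its first cookie.
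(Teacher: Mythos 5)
Your proof is correct and takes essentially the same route as the paper's: the zero--one law (Proposition~\ref{01lemcor}) reduces the problem to comparing $\E_\pi[D_\infty^0]$ with $\E_\pi[\bar d(\w_0)]=\d$, Lemmas~\ref{EpiDinf} and~\ref{EpiDinf1} supply the inequalities in the two directions, and the boundary case $\d=1$ is handled by the same split on whether $\P_\pi(X_0\ge 1)>0$ or $\P_\pi(X_0=0)=1$, exploiting Assumption~\ref{asmell} and the independence structure from Corollary~\ref{cor:invariant} to produce an event of positive $\P_\pi$-probability on which $D_\infty^0<\bar d(\w_0)$. The only presentational difference is that in Case~I you make explicit the independence of the ``stay positive'' event from $\w_0$ (which the paper leaves implicit), and in Case~II you first rule out $\E_\eta[d(\w_{0,1})]=1$ via $\w_{0,1}=\d_1$ rather than deriving $\E_\eta[d(\w_{0,1})]<1$ directly; these are equivalent.
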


\begin{proof}
It follows from Proposition \ref{01lemcor} that either $\P_\pi(\mathcal{R}) = 1$ or $\P_\pi(\mathcal{T}_+) = 1$. 
First, assume that $\P_\pi(\mathcal{R}) = 1$. Since the random walk then visits the origin infinitely often, we can conclude that $\E_\pi[ D_\infty^0] = \E_\pi[ \bar{d}(\w_0) ] = \E_\eta[ \bar{d}(\w_0) ] = \d$. Therefore, it follows from Lemma \ref{EpiDinf} that $\P_\pi(\mathcal{R}) = 1$ implies $\d\leq 1$.

Conversely, assume that $\P_\pi(\mathcal{T}_+) = 1$. 
Then, Lemma \ref{EpiDinf1} implies that $\d \geq \E_\pi[D_\infty^0] = 1$. 
To improve this weak inequality to a strict inequality we need to consider two cases. 

\noindent\textbf{Case I:} $\P_\pi(X_0 > 0) > 0$. 
 In this case, 
\[
 \P_\pi( X_n > 0, \forall n\geq 0) \geq E_\pi\left[ P_\w^{X_0}( \mathcal{P} \cap \mathcal{T}_+ ) \ind{X_0 > 0} \right] 
= \pi( X_0 > 0 ) \P_\eta( \mathcal{P} \cap \mathcal{T}_+ ) > 0, 
\]
from which it follows that $\d = \E_\pi[\bar{d}(\w_0)] > \E_\pi[D_\infty^0] = 1$.

\noindent\textbf{Case II:} $\P_\pi(X_0 = 0) = 1$. 
In this case it must be that the first cookie cannot induce a jump to the right larger than 1, and since Assumption \ref{asmell} implies that there is always a positive probability of a nonpositive jump on the first visit to a site we can conclude that $E_\eta[ d(\w_{0,1}) ] < 1 = \E_\pi[D_\infty^0] \leq \E_\eta[\bar{d}(\w_0)]$. We may thus conclude that $\eta( d(\w_{0,1})  < \bar{d}(\w_0) ) > 0$, and thus
\begin{align*}
 \P_\pi( D_\infty^0 < \bar{d}(\w_0) )
&\geq \P_\pi\left( d(\w_{0,1}) < \bar{d}(\w_0), \, X_0 = 0, \, \inf_{n\geq 1} X_n \geq 1 \right) \\
&= E_\eta\left[ \ind{ d(\w_{0,1}) < \bar{d}(\w_0) } \w_{0,1}(1) \right] \P_\eta( \mathcal{P} \cap \mathcal{T}_+ ) \\
&> 0,  
\end{align*}
where the last equality follows from Lemma \ref{01lem} and the fact that $\w_{0,1}(1) > 0$ due to Assumption \ref{asmell}. 
It then follows that  $1 =\E_\pi[ D_\infty^0 ] < \E_\pi[ \bar{d}(\w_0) ] = \d$.
\end{proof}

We conclude by giving the proof of Theorem \ref{thm:rt}, which follows easily from Proposition \ref{Ppirt} and the results from Section \ref{sec:01}. 

\begin{proof}
Due to Theorem \ref{thm:ergrt}\ref{K01law} and Proposition \ref{01lemcor}, we need only to show that $\P_\eta(\mathcal{T}_+) \iff \d>1$. 

\begin{align*}
\d > 1 &\iff \P_\pi(\mathcal{T}_+) = 1 && \text{(Proposition \ref{Ppirt})} \\
&\iff \P_\pi(\mathcal{P} \cap \mathcal{T}_+) > 0 && \text{(Corollary \ref{01lempi} and Proposition \ref{01lemcor})} \\
&\iff \P_\eta(\mathcal{P} \cap \mathcal{T}_+) > 0 && \text{(Corollary \ref{cor:invariant}\ref{piprop1})}\\
&\iff \P_\eta( \mathcal{T}_+ ) = 1. && \text{(Lemma \ref{01lem} and Proposition \ref{01lemcor})}
\end{align*}
\end{proof}

\bibliographystyle{alpha}
\bibliography{CookieRW}

\end{document}